\numberwithin{equation}{section}
\numberwithin{table}{section}
\declaretheorem[style=plain,parent=section]{theorem}
\declaretheorem[style=plain,sibling=theorem]{corollary}
\declaretheorem[style=plain,sibling=theorem]{lemma}
\declaretheorem[style=plain,sibling=theorem]{proposition}
\declaretheorem[style=definition,sibling=theorem]{definition}
\declaretheorem[style=definition, qed=\hfill $\diamond$, sibling=definition]{example}
\declaretheorem[style=remark,sibling=theorem]{remark}
\theoremstyle{definition}
\newtheorem*{theorem*}{Theorem}
\newtheorem*{exampleth}{Proof}
\newtheoremstyle{named}{}{}{\itshape}{}{\bfseries}{.}{.5em}{\thmnote{#3 }#1}
\theoremstyle{named}
\DeclareMathOperator{\Hom}{Hom}
\DeclareMathOperator{\Spec}{Spec}
\newcommand{\topo}{\mathrm{top}}
\newcommand{\gp}{\mathrm{gp}}
\newcommand{\rob}{\mathrm{rob}}
\newcommand{\tv}{\mathcal{X}}
\newcommand{\R}{\mathbb{R}}
\newcommand{\CC}{\mathbb{C}}
\newcommand{\Z}{\mathbb{Z}}
\newcommand{\N}{\mathbb{N}}
\newcommand{\D}{\mathbb{D}}
\newcommand{\bS}{\mathbb{S}}
\newcommand{\bP}{\mathbb{P}}
\newcommand{\cF}{\mathcal{F}}
\newcommand{\cM}{\mathcal{M}}
\newcommand{\cN}{\mathcal{N}}
\newcommand{\cO}{\mathcal{O}}
\newcommand{\cS}{\mathcal{S}}
\begin{document}
\title[An introduction to real oriented blowups]{An introduction to real oriented blowups \\ in toric, toroidal
   and logarithmic geometries}
\author[P. Popescu-Pampu]{Patrick Popescu-Pampu}
\keywords{Complex singularity, Betti realization, Kato-Nakayama spaces, log geometry, log structures,
   Milnor fibers, polar coordinates, prelog structures, real oriented blowups, rounding,
   smoothing, toric variety,
   toroidal variety, tubular neighborhood}
\subjclass[2020]{\emph{Primary}:  14A21, 14B05; \emph{Secondary}: 14M25, 32S05, 32S55.}

\date{14 July 2025}

{\bf To appear in the book {\em Singularity Theory from Modern Perspectives} edited by
Javier Fern\'andez de Bobadilla and Anne Pichon, {\em Panoramas et synth\`eses}, Soci\'et\'e Math\'ematique de France.}

\bigskip

\begin{abstract}
        This text is an introduction to the applications of {\em rounding of complex log spaces}
        (also known as  {\em Kato-Nakayama} or {\em Betti realization}) to singularity theory.
        Log spaces in the sense of Fontaine and Illusie were first described in print by Kato,
        in a 1988 paper. Rounding of complex log spaces was introduced in a 1999 paper by
        Kato and Nakayama and is a functorial generalization of A'Campo's 1975 notion of
        a {\em real oriented blowup}. It allows to cut canonically any complex toroidal
        variety $X$ along its toroidal boundary $\partial X$,
        producing a topological manifold-with-boundary, whose boundary is a canonical
        representative of the boundary of any tubular neighborhood of $\partial X$ in $X$.
        In singularity theory, roundings may be used to get canonical representatives
        of links of isolated complex analytic singularities and of Milnor fibers
        of smoothings of complex singularities, once
        toroidal resolutions of the singularity or of the smoothing are chosen.
        The text starts with introductions to not necessarily normal toric varieties,
        it passes then to toroidal varieties and to their real oriented blowups.
        It continues with introductions to
        log spaces and to rounding of complex log spaces. It concludes with an important theorem of
        Nakayama and Ogus about the local triviality of the rounding of special types of log morphisms.
        The notions of {\em affine toric variety}, {\em real oriented blowup}, {\em log structure}
        and {\em rounding} are introduced by means of the classical
        {\em passage to polar coordinates}.
 \end{abstract}

\maketitle
\vspace{-10mm}
\tableofcontents

\section{\bf Introduction}   \label{sec:introd}

\medskip
\subsection{The problem of non-uniqueness of tubular neighborhoods}   $\  $  \label{ssec:nutub}
\medskip

Riemann introduced the surfaces which bear his name as ramified covers of portions of the {\em Riemann sphere} $\CC \cup \{\infty\}$ associated to multivalued functions. In order to understand the topological structure of those covers, he performed {\em cross-cuts} along curves which go from one boundary point to another such point, as shown by the following quote of  \cite[Section 6]{R 51}:

   \begin{quote}
        {\em The study of the connectivity of a surface is based on its decomposition via transverse cuts,
        that is lines which cut through the interior from one boundary point simply (no point
        occurring multiply) to another boundary point.}
  \end{quote}
He also explained what to do when the surface under scrutiny had no boundary, as was the case for the Riemann surface associated to an algebraic function (see \cite[Section 2]{R 57}):

     \begin{quote}
        {\em To apply this treatment to a surface which has no boundary, in other words a closed surface,
        it must first be transformed into one with a boundary by excluding an arbitrary point, so that
        the first cross-cut is a closed curve which begins and ends in this point.}
     \end{quote}
This sentence leads to imagine that it should be possible to cut a surface at a point
without simply removing it, but by replacing it instead by a boundary curve.

 In the XX-th century, the intuitive operation of cutting was formalized and generalized to
 submanifolds of higher-dimensional manifolds, initially as a basis for {\em surgery operations}
 in differential topology. The first such operation was introduced by Milnor \cite{Mi 61}, following
 an idea of Thom,  under the name of {\em surgery} and independently
 by Wallace \cite{W 60} under the name of
 {\em spherical modification} (see Kosinski's historical explanations in \cite[Chapter VII.8]{K 93}).
It consists in removing a sphere with trivialized normal bundle and replacing it with another such sphere
of complementary dimension minus one (see \cite[Definition 5.2.1]{GS 99}).
The related operation of {\em Dehn surgery} (see \cite[Chapter 5.3]{GS 99}) replaces a circle inside a
three-dimensional manifold by another such circle in a way which may be more general
than in Milnor's and Wallace's surgery. In general, both kinds of surgery operations
modify the global topology of the manifold.
This may be difficult to understand at first sight: for instance, how is it possible to replace
a circle by itself and change nevertheless the global topology?

In order to explain this phenomenon, the notion of {\em regular} or  {\em tubular} {\em neighborhood}
of the submanifold proves itself crucial:
one does not simply remove the submanifold, but the interior of a compact tubular neighborhood
of it in the ambient manifold, creating a {\em boundary} which has the structure of sphere bundle over the
submanifold. One understands then how to replace a circle by a circle in a different way inside a three-dimensional manifold: one removes a tubular neighborhood of the circle, which is diffeomorphic to a solid torus
$\bS^1 \times \D^2$, creating a boundary component diffeomorphic to a two-dimensional torus
$\bS^1 \times \bS^1$. Then, one glues back the solid torus by a diffeomorphism of the boundaries. The
result depends only on the isotopy class of this diffeomorphism. There are many ways to perform this gluing  because up to isotopy, there are many diffeomorphisms of two tori.

Tubular neighborhoods became the basic ingredients of {\em surgery operations}
along connected submanifolds $S$ of a connected manifold $M$, which may be defined
as the results of procedures of the following kind:
  \begin{itemize}
     \item choose a compact tubular neighborhood $N(S)$ of $S$ in $M$;
     \item cut $M$ along the boundary $\partial_{\topo} N(S)$ of $N(S)$;
     \item remove the resulting connected component which is
         diffeomorphic to the tubular neighborhood $N(S)$;
     \item glue to the remaining manifold-with-boundary
        another manifold-with-boundary along a boundary component diffeomorphic to
        $\partial_{\topo} N(S)$.
   \end{itemize}
The main drawback of such procedures is that they involve non-canonical choices of tubular
neighborhoods, which may create problems in some
contexts. For instance, Grothendieck described such problems as follows in
his 1984 text \cite[Section 5]{G 83}:

\begin{quote}
     {\em  This naive vision immediately encounters various difficulties. The first is the
     somewhat vague nature of the very notion of tubular neighbourhood, which acquires
     a tolerably precise meaning only in the presence of structures which are much more
     rigid than the mere topological structure, such as ``piecewise linear'' or Riemannian
     (or more generally, space with a distance function) structure; the trouble here is that in
     the examples which naturally come to mind, one does not have such structures at
     one's disposal -- at best an equivalence class of such structures, which makes it possible
     to rigidify the situation somewhat. If on the other hand one assumes that one might find
     an expedient in order to produce a tubular neighbourhood having the desired properties,
     which moreover would be unique modulo an automorphism (say a topological one)
     of the situation -- an automorphism which moreover respects the fibered structure provided
     by the glueing map, there still remains the difficulty arising from the lack of canonicity
     of the choices involved, as the said automorphism is obviously not unique, whatever
     may be done in order to ``normalise'' it.}
\end{quote}

\medskip
\subsection{Bypassing the problem of non-uniqueness of tubular neighborhoods}   $\  $
   \label{ssec:bypasspbm}
\medskip

It turns out that there is a way to bypass the problem of non-canonicity of tubular neighborhoods
whenever one wants to cut a complex analytic manifold $M$
along a smooth complex hypersurface or, more generally, a divisor $D$ with normal crossings and
with smooth irreducible components (one says, then, that $D$ has {\em simple normal crossings}).
Indeed, in this case A'Campo introduced in his 1975 paper \cite{A 75} the operation of a {\em real oriented
blowup of $M$ along $D$}, which generalizes the {\em passage to polar coordinates}.

The topological effect of passing to polar coordinates is to transform the complex plane $\CC$
into an annulus by replacing the origin of $\CC$
by a circle whose points represent the {\em oriented real} lines
of $\CC$ passing through the origin. If one had replaced instead the origin by a circle
representing the {\em unoriented} real lines passing through it, then one would perform the
usual algebro-geometric blowup of $\CC$ seen as a real plane,
which explains why the passage to polar coordinates is also called a {\em real oriented blowup}.
Note that the usual blowup of $\CC$ at the origin produces a M\"obius band instead of an annulus,
the circle which replaces the origin of $\CC$ being a core circle of this M\"obius band (see \autoref{fig:twobo}).

 \begin{figure}[!h]
    \centering
        \includegraphics[width=8cm]{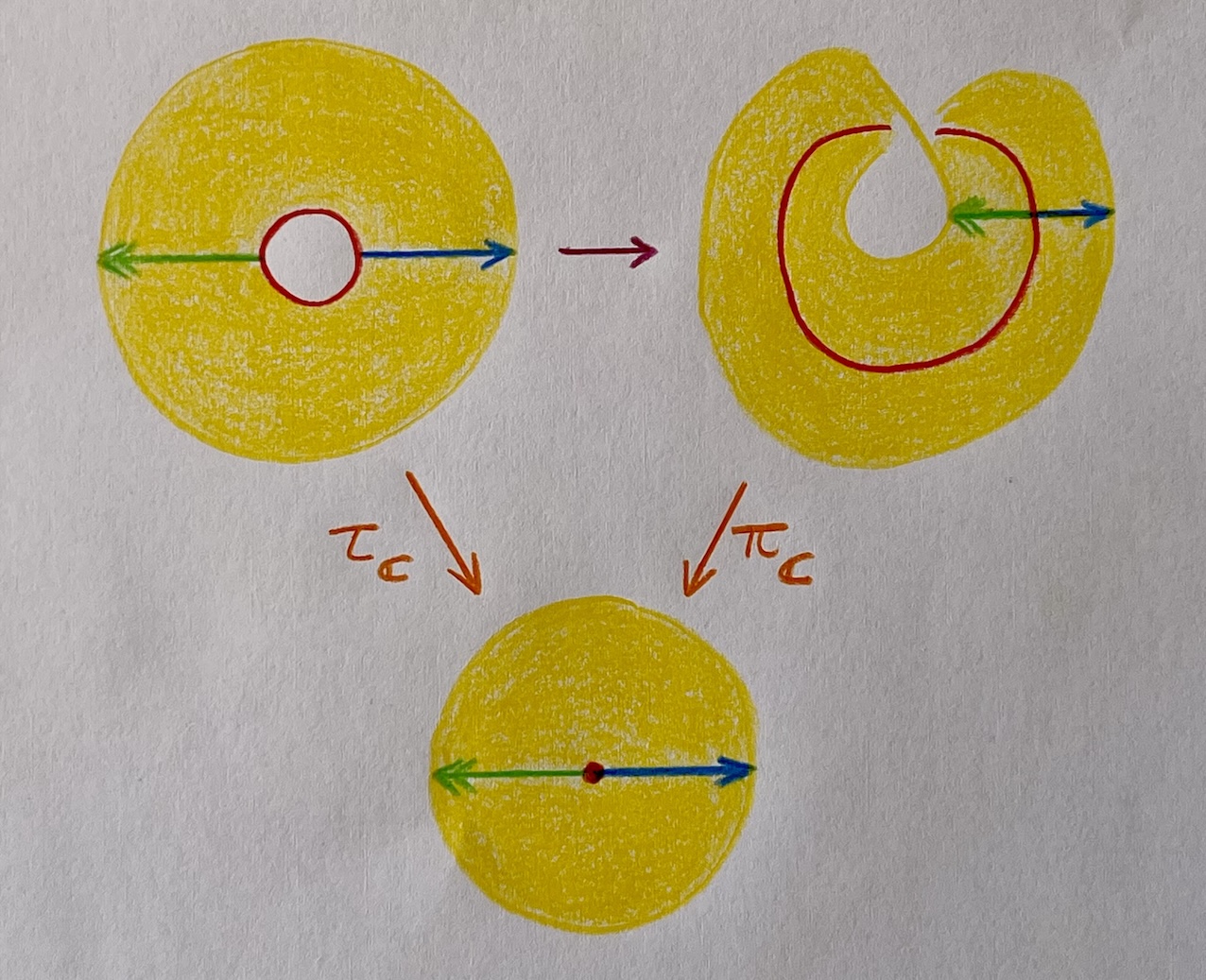}
     \caption{The real oriented blowup $\tau_{\CC}$ of $\CC$ at the origin
         factors through its real blowup $\pi_{\CC}$}
   \label{fig:twobo}
    \end{figure}

As in this simple case, A'Campo's real oriented blowup replaces canonically
the divisor $D$, which may be seen as an {\em algebro-geometric boundary} of $M$,
by a {\em topological boundary} of the complement $M \setminus D$
of the divisor, and the resulting manifold-with-boundary
(and {\em corners} when the divisor has singular points)
is homeomorphic to the complement of the interior
of a tubular neighborhood of the divisor $D$. That is, {\em no canonical tubular neighborhood
of the divisor is introduced, one defines instead a canonical cutting operation along the divisor},
which {\em replaces the algebro-geometric boundary $D$ by a topological boundary}
(see \autoref{fig:robncd}).
In order to distinguish these two kinds of {\em boundaries}, we denote in this paper
by $\partial X$ the algebro-geometric boundary of a space $X$
and by $\partial_{\topo} Y$ the topological boundary of a topological manifold-with-boundary $Y$.

\begin{figure}[!h]
    \centering
        \includegraphics[width=9cm]{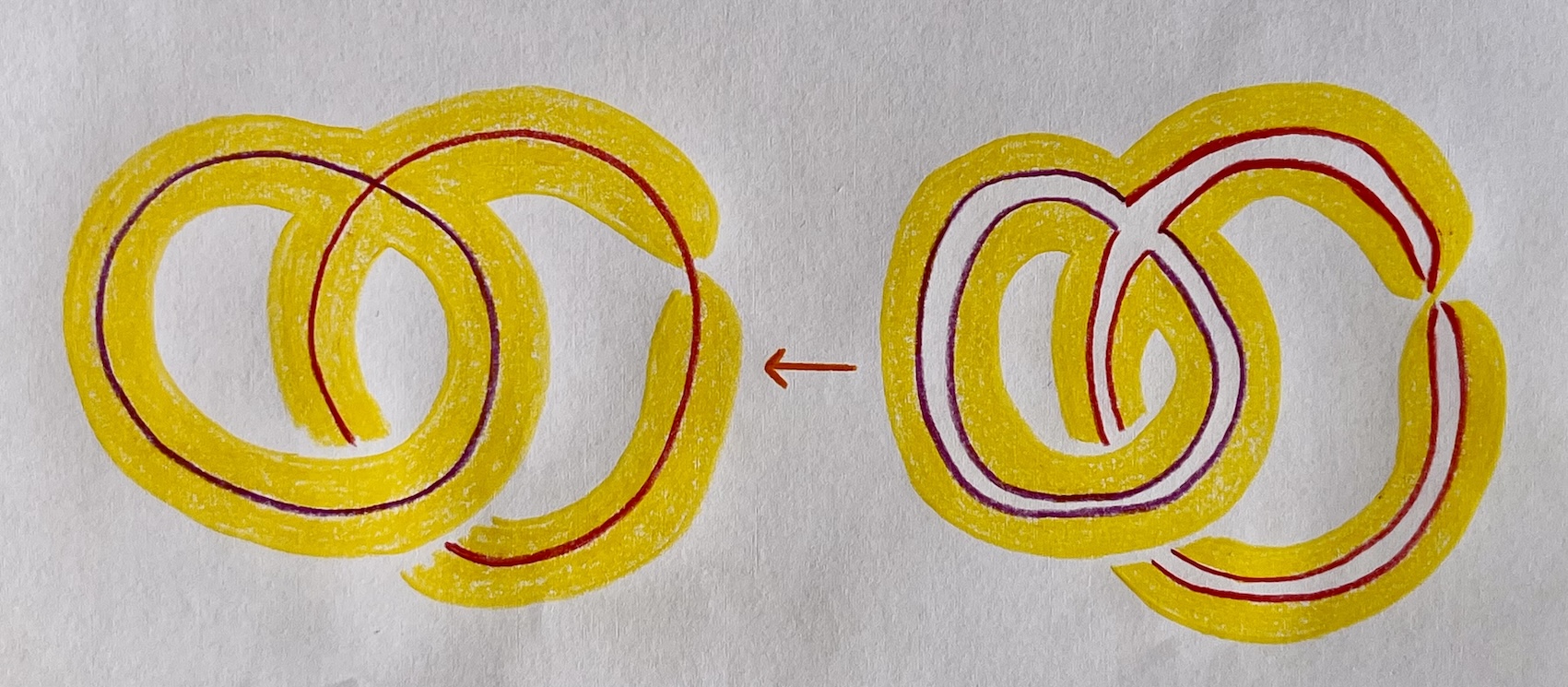}
     \caption{The real oriented blowup of a smooth surface along a divisor with simple normal crossings}
   \label{fig:robncd}
    \end{figure}

{\em When does one need to cut a complex manifold along a  divisor with simple normal crossings?}
A natural such situation arises in {\em singularity theory} when one wants to understand
the structure of the {\em link} or {\em boundary} $\partial(X,x)$ of an {\em irreducible isolated singularity}
$(X,x)$, that is, of an analytically irreducible germ of complex variety admitting a representative
with smooth complement $X \setminus \{x\}$.
By definition, $\partial(X,x)$ is the boundary of a preferred compact neighborhood of $x$ in $X$,
which we will call a {\em Milnor representative} of the singularity,
obtained by intersecting a representative of the germ $(X,x)$ embedded in an affine space $\CC^n$
with an associated compact Milnor ball.
After resolving the singularity $x$ in such a way that it gets replaced by an
exceptional divisor $E$ with simple normal crossings
 in a manifold $\tilde{X}$ (which is always possible by Hironaka's resolution theorem),
the link $\partial(X,x)$ appears as the boundary of a tubular neighborhood of $E$ in $\tilde{X}$. In this
context, in general one cannot choose $E$ to be smooth. This may already be seen in the
two-dimensional case, in which this method of studying links of singularities was first introduced
by Mumford in \cite{M 61}. Indeed, in this dimension,  one has a minimal resolution of singularities
and if its exceptional divisor is not smooth, then no resolution with smooth exceptional divisor may exist.

Mumford used the previous viewpoint on links of isolated complex surface singularities
to describe them as {\em plumbed manifolds}, constructed from elementary pieces by following
a plan represented by the {\em dual graph} of the exceptional divisor $E$.
Its vertices are endowed with two kinds of {\em decorations}:
the {\em genera} and the {\em self-intersection numbers} of the corresponding irreducible
components of the
exceptional divisor in the ambient smooth complex surface $\tilde{X}$ (see \cite{PP 22}  for details about the historical evolution of the relations between graph theory and singularity theory). One may
describe more generally in this way the boundary of a tubular neighborhood of a compact reduced divisor
$D$ with simple normal crossings in any smooth complex surface $M$. In other words, one
has a {\em plumbing presentation} of the boundary of the real oriented blowup of the manifold
$M$ along the divisor $D$.

If one wants to describe similarly the boundary of the real oriented blowup of a higher-dimensional
manifold $M$ along a simple normal crossings divisor $D$,  it is necessary
to find an analog of the dual graph and of its decorations. The dual graph has such an analog:
it is the {\em dual complex} of the exceptional divisor $E$, whose simplices correspond to the
irreducible components of all the non-empty intersections of components of $E$
(see Danilov \cite{D 75}, Stepanov \cite{S 06} and de Fernex, Koll\'ar and Xu \cite{FKX 17}).
In the surface case, the genus associated to a vertex encodes the abstract topological
type of the corresponding irreducible component,
and the self-intersection number encodes the topological type of its embedding
in a tubular neighborhood. In higher dimensions, these topological types cannot
be any longer encoded by single numbers, the best
one can hope for is to encode them by homology and homotopy groups and suitable
characteristic classes. That is, no simple generalization of the decorated dual graph of a compact
divisor $D$ with simple normal crossings is available in higher dimensions,
which would allow us to describe the structure of the topological boundary of the real oriented
blowup of $M$ along $D$.

This problem can be bypassed by defining the operation of  real oriented blowup differently,
using Fontaine and Illusie's {\em logarithmic structures} described for the first time in print by Kato
in \cite{K 88}.  Succinctly, $D$ defines a canonical {\em logarithmic structure} on $M$,
whose {\em restriction}
to $D$ gives birth, by a canonical operation of {\em rounding}, to a manifold with corners which is
homeomorphic to the topological boundary of the real oriented
blowup of $M$ along $D$. That is, {\em in order to encode this topological boundary,
one does not need to define a decorated dual complex
of the embedding of $D$ in $M$, it suffices to work with $D$ endowed with a
suitable logarithmic structure}.

\medskip
\subsection{Aspects of the logarithmic viewpoint on real oriented blowups} $\:$  \label{ssec:advtoric}
\medskip

When encountered for the first time, the definition of a {\em logarithmic structure} or
a {\em log structure} on a ringed space $W$ is not very enlightening: {\em it is a
sheaf of monoids $\cM_W$ endowed with a morphism $\alpha_W$ of sheaves of monoids
to the structure sheaf $\cO_W$ of $W$, seen as a sheaf of multiplicative monoids,
such that $\alpha_W$ induces an isomorphism of the corresponding subsheaves of invertible
elements of $\cM_W$ and $\cO_W$.} The objective of this text is to lead the readers
naturally to this definition starting from the classical passage to polar coordinates
(see \autoref{ssec:polcoordintr})
and to give a feeling for some of its properties related to real oriented blowups. Namely,
we hope to make the readers learn that:
  \begin{enumerate}
     \item A {\em log space} is a complex space endowed with a log structure (see \autoref{def:logspace}).
          There exists a notion of {\em morphism} between log spaces, which allows to
          define a {\em log category} (see \autoref{def:morlogspaces}).
     \item If $D$ is a reduced divisor in a complex variety $W$, then there
         is an induced {\em divisorial log structure} on $W$, whose sheaf $\cM_W$ is formed by the
         holomorphic functions which do not vanish outside $D$ and whose morphism
         $\alpha_W$ is the canonical embedding of $\cM_W$ in the structure sheaf $\cO_W$
         of $W$ (see \autoref{def:divlogstruct}).
     \item   \label{incldiv}
        If $E \hookrightarrow V, D \hookrightarrow W$ are  reduced divisors in the
        complex varieties $V, W$
         and if $f : V \to W$ is a holomorphic morphism such that $f^{-1}(D) \subseteq E$, then
        there is an associated morphism $f^{\dagger} : V^{\dagger} \to W^{\dagger}$
        of log spaces from $V$ endowed with the
        divisorial log structure induced by $E$ to $W$ endowed with the
        divisorial log structure induced by $D$ (see \autoref{def:logenhancement}).
     \item    \label{strictmorph}
         Log structures may be {\em pulled back} by morphisms of complex spaces
         (see \autoref{def:pb}). The morphisms of log spaces obtained in this way are called {\em strict}
         (see \autoref{def:strict}).
     \item  There exists a functor of {\em rounding} from the category of complex log
          spaces to that of topological spaces (see \autoref{def:rounding} and \autoref{thm:torsorfibre}).
          This functor
          was introduced by Kato and Nakayama in \cite{KN 99} and given this
          name by Nakayama and Ogus in \cite{NO 10}. We denote by
          $\phi^{\odot} : V^{\odot} \to W^{\odot}$ the rounding of the morphism $\phi : V \to W$
          of the complex log category.
     \item If $W$ is a complex log space with underlying topological space $|W|$, then
          there exists a canonical continuous map $\tau_W : W^{\odot} \to |W|$, called
         the {\em rounding morphism} of the log space $W$. {\em Whenever $W$ is a complex
        manifold endowed with the log structure induced by a simple normal
        crossings divisor,  $\tau_W$ is isomorphic to the real oriented blowup of
        the manifold along the divisor} (see \autoref{prop:topmantoroid}).
      \item If $\phi : V \to W$ is a morphism of logarithmic spaces
      with associated continuous map $|\phi| : |V| \to |W|$, then the associated diagram
                        \[
                            \xymatrix{
                                V^{\odot}     \ar[r]^{\phi^{\odot}}   \ar[d]_{\tau_V} &   W^{\odot} \ar[d]^{\tau_W} \\
                                |V|    \ar[r]_{|\phi|}                                  &   |W|.}
                          \]
                    is commutative (see \autoref{thm:torsorfibre} \eqref{functbr}).
         \item If the morphism $\phi : V \to W$ is {\em strict} in the sense of point
              \eqref{strictmorph}, the previous commutative
             diagram is moreover {\em cartesian}, that is, it is a pullback diagram
             (see \autoref{thm:torsorfibre} \eqref{cartdiag}). In particular,
             if $\varphi : D \hookrightarrow W$ is the embedding of a simple normal
             crossings divisor $D$ in a manifold $W$,
             then the rounding of the restriction to $D$ of the log structure induced by $D$ on $W$
             gets identified by the map $\phi^{\odot} : D^{\odot} \hookrightarrow W^{\odot}$
             with the boundary of the real oriented blowup of $W$ along $D$. Here
             $\phi: D^{\dagger} \to W^{\dagger}$ is the morphism between the previous
             log spaces induced by $\varphi$.
   \end{enumerate}

   \medskip
   \subsection{Consequences for singularity theory} \label{ssec:advsing}  $\ $
   \medskip

  An advantage of the logarithmic viewpoint on real oriented blowups is that one has now
  a {\em functor of rounding} between suitable categories and that the rounding of a complex
  log space is endowed with a {\em canonical} projection onto the underlying topological space
  of the log space.  This fact has three main consequences for singularity theory:
    \begin{itemize}
          \item Assume that $(X, x)$ is an isolated singularity of complex analytic space and
              that $\pi : (\tilde{X}, E) \to (X,x)$ is a resolution of it whose exceptional divisor $E$
              has simple normal crossings. Consider the
              restriction to $E$ of the divisorial log structure induced by $E$ on $\tilde{X}$. Then {\em the
              rounding of this log structure on $E$ is a representative of the link $\partial(X,x)$ of the
              singularity $(X,x)$}. This representative is endowed with a canonical projection to $E$,
              unlike what happens with the boundaries of the tubular neighborhoods
              of $E$ inside $\tilde{X}$.
          \item  Assume that $(X, x)$ is an isolated singularity of a complex analytic space and
              that $\pi_i : (\tilde{X}_i, E_i) \to (X,x)$ for $i \in \{1, 2\}$ are two resolutions of it
              whose exceptional divisors $E_i$ have simple normal crossings, such that
              $\pi_1$ factors through $\pi_2$, that is, such that there exists a bimeromorphic morphism
              $\psi :  (\tilde{X}_1, E_1) \to  (\tilde{X}_2, E_2)$ with $\pi_1 = \psi \circ \pi_2 $.
             By point \eqref{incldiv} above, the morphism $\psi$ induces a log morphism
             from $\tilde{X}_1$ endowed with the divisorial log structure induced by $E_1$
             to $\tilde{X}_2$ endowed with the divisorial log structure induced by $E_2$.
             Restricting this morphism to the exceptional divisors $E_i$ one gets a log morphism
             whose rounding induces a morphism between the two associated respresentatives
             of the link $\partial(X,x)$ of the singularity $(X,x)$. Thus, {\em the inverse system of
             such resolutions of $(X,x)$ provides us with an inverse system of
             representatives of $\partial(X,x)$}.
          \item Assume that $f : (X, x) \to (\CC, 0)$ is a germ of a holomorphic function defined on a germ
             of an irreducible complex variety such that the fiber $f^{-1}(0)$ contains the
             singular locus of $(X,x)$. Therefore, the other fibers of $f$ are smooth
             in a neighborhood of $x$.
             For this reason, one says that $f$ is a {\em smoothing of the singularity $(Z(f),x)$}.
             Then, $f$ admits an associated {\em Milnor} or {\em Milnor-L\^e fibration over the circle}
             (see \cite{M 68}, \cite{L 77}, \cite[Theorem 5.1]{S 19} and \cite[Theorem 6.7.1]{CS 21}).
             Choose a Milnor representative of $X$ and a resolution
             $\pi : (\tilde{X}, E) \to (X,x)$ such that the set-theoretic fiber $Z(f \circ \pi) := (f \circ \pi)^{-1}(0)$
             of the composition $f \circ \pi$  is a divisor with simple normal
             crossings. By point \eqref{incldiv} above, the morphism $f \circ \pi :(\tilde{X}, E) \to  (\CC, 0)$
             yields a log morphism from $\tilde{X}$ endowed with the divisorial log structure
             induced by $Z(f \circ \pi)$
             to $\CC$ endowed with the divisorial log structure induced by the origin $0$, seen as a
             reduced divisor on $\CC$. {\em Restricting this log morphism to $Z(f \circ \pi)$ and $0$
             respectively, we get a log morphism whose rounding is a
             representative of the Milnor-L\^e fibration of $f$}. This fact is a consequence of
             a special case of an important local triviality theorem of Nakayama and Ogus
             (see \autoref{cor:milntubelog}).
    \end{itemize}

In fact, A'Campo had introduced real oriented blowups in \cite{A 75}
precisely in order to study Milnor fibrations of smoothings (and their geometric monodromies).
A'Campo's real oriented blowups or the more general roundings of complex log structures
were again used in the study of Milnor fibrations by Parusi\'nski \cite{P 98},
Campesato, Fichou and Parusi\'nski \cite{CFP 21}, Fern\'andez de Bobadilla and Pe\l ka \cite{FP 24},
Portilla Cuadrado and Sigurdsson \cite{PS 23}  and Cueto, the present author and Stepanov
\cite{CPPS 23}.

A specificity of this last reference compared to the other ones
is that it considers not only divisorial log structures induced
by simple normal crossings divisors in manifolds, but more generally by the boundaries
of {\em toroidal varieties}. Those are by definition complex analytic varieties endowed
with an algebro-geometric boundary locally analytically isomorphic to open sets in
toric varieties endowed with their toric boundary divisor. It turns out that the logarithmic
viewpoint on real oriented blowup makes as easy to perform real oriented blowups
of toroidal varieties along their boundaries as of complex manifolds along simple normal crossings
divisors. This gives more flexibility to the tool of real oriented blowup, allowing to study
singularities using modifications which, endowed with their exceptional loci, are not
necessarily smooth, but are toroidal. In addition to \cite{CPPS 23}, where such modifications
are obtained by using fans which subdivide the {\em local tropicalizations} of the singularities
under scrutiny in the sense of \cite{PPS 13} and \cite{PPS 25},
one may look also at Bultot and Nicaise \cite{BN 20}, where they are
used in the study of the so-called {\em monodromy conjecture} concerning Denef and Loeser's
motivic Zeta function.

{\em For this reason, in this text we chose to give not only an introduction to real oriented blowups
for divisors  with simple normal crossings in complex manifolds, but also to toric, toroidal
and logarithmic geometries.} Another reason of this choice is that we want to emphasize the
following tight relations between the three kinds of geometry:

\begin{itemize}
   \item Every complex toric variety is canonically a toroidal variety and  toric morphisms
      are always toroidal morphisms. That is, the category of complex toric varieties is a
      subcategory of that of toroidal varieties.
   \item  Every toroidal variety has a canonical divisorial log structure and toroidal morphisms
      induce canonically log morphisms relative to these structures.
      That is, the category of toroidal varieties is a subcategory of that of complex log spaces.
    \item When one studies a toroidal variety $X$, one is also interested in the structure of its
        algebro-geometric boundary $\partial X$. This boundary is  not a toroidal subvariety
        of $X$, but it is a log subspace of the associated complex log space of $X$.
   \item The rounding of a toroidal log variety is canonically isomorphic to the real oriented
      blowup of the corresponding toroidal variety. That is, the rounding operation is a
      generalization of a real oriented blowup.
    \item A subvariety of a toric variety which intersects transversally its orbits inherits a
      canonical toroidal structure from the ambient variety. This applies to modifications of
      Newton non-degenerate singularities obtained by subdividing
      their local tropicalizations (see \cite[Sections 3.3 and 4.1]{CPPS 23}).
      This illustrates the fact that, in singularity theory, toric and toroidal geometries are intimately related.
\end{itemize}

\medskip
\subsection{The structure of the text} $\:$  \label{ssec:structext}
\medskip

This text is subdivided into sections and subsections.
Each section begins by a general description of its content and of the literature related to it. In turn, the first paragraph of each subsection gives a brief description of its content. Whenever a notation is introduced, it is presented inside a \boxed{\textrm{box}}. For instance, the continuous map
       \[ \tau_{\CC} : \R_{\geq 0}  \times \bS^1  \to  \CC \]
of {\em passage to polar coordinates}
is emphasized as $\boxed{\tau_{\CC}}$ in formula \eqref{eq:changepolarcomplexmap}.

This map will be a recurrent example in this paper. Successive reinterpretations of it will
serve to introduce new concepts and to highlight different aspects, namely:
\begin{itemize}
   \item  the need to work with {\em monoids} and their morphisms (see \autoref{rem:twoaspectsrobu});
   \item  a definition of {\em real oriented blowup} of $\CC^n$ (see \autoref{def:robuCn})
        and more generally
       of any complex affine toric variety (see formula  \eqref{eq:robarbtoricmon});
   \item the definition of the {\em divisorial log structure} on the complex plane $\CC$ induced by
          its origin (see \autoref{def:robRiemsurf}) and, by extension, of {\em log structures};
    \item the definition of {\em prelog structures} (see \autoref{ssec:prelogassoc}).
\end{itemize}

Three aspects of the map $\tau_{\CC}$ will appear in this text:
    \begin{itemize}
         \item  it replaces an algebro-geometric boundary
              by a topological boundary (see \autoref{rem:twoaspectsrobu});
         \item  it is a morphism of monoids (see \autoref{rem:twoaspectsrobu});
         \item  it defines a complex log structure on a point (see \autoref{rem:newvpolcoord}).
    \end{itemize}

\autoref{sec:introtoric} introduces the reader to toric and toroidal geometries.
\autoref{sec:torictoroidrob} presents the real oriented blowup operations in toric and
toroidal geometries and compares them to the original definition by A'Campo of
a real oriented blowup of a complex manifold along a divisor with simple normal crossings.
Finally, \autoref{sec:poltoround} introduces the reader to log geometry and to Kato and
Nakayama's version of real oriented blowup, namely, the {\em rounding} of complex log spaces.
Its final \autoref{ssec:approundsing}  gives a synoptic view of applications of rounding to singularity theory.

\medskip
\subsection{Acknowledgments}   $\ $ \label{ssec:acknow}
\medskip

This work was supported by the ANR SINTROP (ANR-22-CE40-0014) and the Labex CEMPI (ANR-11-LABX-0007-01).

The author is very grateful to Mar\'{\i}a Ang\'elica Cueto and Dmitry Stepanov
with whom he collaborates on a long-term project of study of Milnor fibers of smoothings
using tropical and logarithmic techniques.
It is during this collaboration that he learned  properties of complex log varieties and
of their roundings which are useful in singularity theory. He thanks warmly Mar\'{\i}a Angelica Cueto for her careful reading of a first version of this text. He is grateful to Hulya Arg\"uz, Luc Illusie and
Arthur Ogus for their remarks.

He is also grateful to the scientific committees  of the {\em Workshop on Singularities: topology, valuations and semigroups} (Universidad Complutense de Madrid, 28--31/10/2019),
of the research school {\em Milnor fibrations, degenerations and deformations from modern perspectives} (CIRM, Marseille, 06--09/09/2021, during Javier Fern\'andez de Bobadilla's Jean Morlet Chair stay), of the 2022 annual meeting of the {\em GDR singularities} (Aussois, 04--08/07/2022) and of the research school
{\em Logarithmic and non-archimedean methods in singularity theory} (CIRM, Marseille, 27--31/01/2025)
for having invited him to give mini-courses on themes related with log-geometry: {\em A tropical and logarithmic study of Milnor fibers}, {\em A proof of Neumann-Wahl Milnor fiber conjecture via logarithmic geometry}, {\em Introduction to logarithmic geometry} and {\em Tropical and logarithmic techniques for the study of Milnor fibers} respectively (see \cite{PP 25}). This text developed partly from his notes for those mini-courses. Many thanks go to Anne Pichon and Javier Fern\'andez de Bobadilla for the invitation to contribute to this volume.

\medskip
\section{\bf From polar coordinates to toric and toroidal geometries}  \label{sec:introtoric}

{\em Toric geometry} is the branch of algebraic geometry which studies {\em toric varieties}
and their toric morphisms. By definition, a complex algebraic variety is called {\em toric}
if it is the closure inside an ambient variety of a single orbit under the action of an {\em algebraic
torus} $((\CC^*)^n, \cdot)$. The first textbook \cite{KKMS 73} of toric geometry was published by Kempf, Knudson, Mumford and Saint-Donat in 1973. Let us quote from its introduction:

\begin{quote}
     {\em  When teaching algebraic geometry and illustrating simple singularities, varieties,
       and morphisms, one almost inevitably tends to choose examples of a ``monomial''  type [...].
       Moreover, even when a variety as a whole is quite general, either
       its singularities, or a certain blow-up may well be defined in local analytic coordinates
       by monomials.  When this happens, problems of algebraic geometry can sometimes
       be translated into purely combinatorial problems involving the lattice of exponents.
       This is the technique that we wish to systematize.}
\end{quote}

Other textbooks of toric geometry were published since then by Oda \cite{O 88}, Fulton \cite{F 93}, Ewald \cite{E 96}, and Cox, Little and Schenck \cite{CLS 11}. One may find shorter introductions to toric geometry in Khovanskii \cite{K 77}, Danilov \cite{D 78}, Teissier \cite{T 81}, Gelfand, Kapranov and Zelevinsky \cite[Chapter II.5]{GKZ 94},  Cox \cite{C 03}, Barthel, Kaup and Fieseler \cite{BKF 07}, Brasselet \cite{B 08}, Garc\'{\i}a Barroso, Gonz\'alez P\'erez and the author \cite[Section 1.3]{GBGPPP 20}. The development of toric geometry was described in \cite[Appendix A]{CLS 11}.

In this section we give an introduction to toric varieties
adapted to our purpose of presenting in the rest of the text the more general notions
of {\em toroidal variety}, {\em complex log variety} and their {\em real oriented blowups}
and {\em roundings}, respectively. As {\em commutative monoids}
are the central algebraic objects of all three geometries, basic terminology about them is
introduced already in the next subsection (see \autoref{def:monoid}).

\medskip
\subsection{The classical passage to polar coordinates} $\:$  \label{ssec:polcoord}
\medskip

In this subsection we explain both the topological and the algebraic aspects of the classical
passage from cartesian to polar coordinates, first for the
complex affine line $\CC$ (see \autoref{def:orbu}),
then for the complex affine space $\CC^n$ (see \autoref{def:robuCn}).  We will see that particular
{\em commutative monoids} play important roles in these constructions:
$(\CC, \cdot), (\R_{\geq 0} \times \bS^1, \cdot)$ and
$(\N^n, +)$ for $n \geq 1$, where $\boxed{\N} := \Z_{\geq 0}$. Namely,
we will see in formula \eqref{eq:realorbudimnreint} that the passage to polar coordinates
of $\CC^n$ may be described as a morphism  of monoids
    \[  \Hom(\N^n,      \R_{\geq 0}  \times \bS^1)  \to   \Hom(\N^n,    \CC).   \]
It is then immediate to construct an analogous morphism of monoids by replacing
the monoid $(\N^n, +)$ by any commutative  monoid $\Gamma$. The natural context in which
the sets $\Hom(\N^n,  \CC)$ appear is that of {\em toric geometry}. This will lead
to our introduction to toric geometry, which spans Subsections \ref{ssec:basictoric} through
\ref{ssec:normaltoric}.
\medskip

The change of variables formulae from cartesian coordinates
$(x,y)$ to polar coordinates $(r, \theta)$ are most of the time written as
  \[ \left\{ \begin{array}{l}
                    x = r \cos \theta \\
                    y = r \sin \theta
               \end{array} \right. \]
or as
     \[ z = r \cdot e^{i \theta} \]
if $\boxed{z} := x + iy$. In what follows we will not use any of the transcendental functions
$\cos, \sin, e^{\bullet}$,  but we will rather write this change of variables as:
   \begin{equation}   \label{eq:changecomplexpolar}
        z = | z |  \cdot \mbox{sign}(z),
   \end{equation}
using the algebraic functions $| z | := \sqrt{x^2+y^2}$ and
$ \mbox{sign}(z) :=  \displaystyle{\frac{1}{\sqrt{x^2+y^2}} (x,y)}$ in the variables $x$ and $y$.
We will use the following terminology for the second algebraic function:

 \begin{definition}\label{def:logCoordinatesPolar}
     Let $\boxed{z} : \CC \to \CC$ be the {\bf standard coordinate function} on $\CC$.
     We denote by $\boxed{\bS^1}$ the unit
     circle in $\CC$, defined by the equation $| z | =1$.
     The \textbf{sign function}
     $\boxed{\mbox{sign}} \colon \CC^*\to \bS^1$ is the morphism of multiplicative abelian groups
     defined by:
         \[ \mbox{sign}(z):=z/|z|. \]
 \end{definition}

 \begin{remark}
     The standard coordinate function $z: \CC \to \CC$ connects the topological and algebraic aspects
     of the complex plane. Its source is to be thought of as a topological space and its target as a field.
  \end{remark}

\begin{remark}
    The name {\em sign function} is motivated by the fact that $\mbox{sign}$
    is an extension of the sign function
   $\R^* \to \{-1, 1\}$ seen as a morphism of multiplicative abelian groups.
   It is obtained by extending the
   source $\R^*$ to $\CC^*$ and the target $ \{-1, 1\}$ to $\bS^1$.
   This function  is a variant
   of the standard notion of argument of a
   non-zero complex number, which takes values in the abelian group $(\R / 2 \pi \Z, +)$
   and is defined by $ r e^{i \theta} \mapsto \theta  \mod 2 \pi$.
   Indeed, one passes from one function to the other one through the isomorphism of groups
      \[ \begin{array}{ccc}
              \bS^1 & \to & \R / 2 \pi \Z  \\
              e^{i \theta}  & \mapsto & \theta
          \end{array}\]
   In fact,  in~\cite[Section V.1.2]{O 18} Ogus even calls {\em argument} the  {\em sign function}
   of \autoref{def:logCoordinatesPolar}.
\end{remark}

The change of variable (\ref{eq:changecomplexpolar}) amounts to considering the map:
         \[     \begin{array}{cccc}
                 \boxed{\psi_{\CC}} : &    \CC  & \dashrightarrow  &  \R_{\geq 0} \times \bS^1  \\
                          &   z       & \mapsto  & \left(  | z |  ,\mbox{sign}(z) \right).
               \end{array}     \]
The dashed arrow indicates that this map is not defined everywhere; namely it is not defined
at the origin of $\CC$ and it cannot be extended by continuity to the whole complex
plane $\CC$. But, most importantly for us, the inverse
   \begin{equation}   \label{eq:changepolarcomplexmap}
              \begin{array}{cccc}
                 \boxed{\tau_{\CC}} : &   \R_{\geq 0}  \times \bS^1 & \to  &  \CC    \\
                          &   (r, u)        & \mapsto  & r \cdot u
               \end{array}
   \end{equation}
   of $\psi_{\CC} $ is everywhere defined and surjective. It is this inverse which is the prototype of
   {\em real oriented blowup} whose generalization we will study in this text.

   \begin{definition}  \label{def:orbu}
       The map $\tau_{\CC}$ defined by formula (\ref{eq:changepolarcomplexmap})
       is called the {\bf real oriented blowup of $\CC$ at the origin}.
   \end{definition}

   The terminology is motivated by the fact that $\tau_{\CC}$ is an analog of the usual
   {\bf real blowup}
     \[ \boxed{\pi_{\CC}}    : \boxed{\mathrm{Bl}_0 \CC}  \to \CC \]
   of $\CC$ at the origin, if we look at $\CC$ as a real vector space instead of as a complex one.
   Indeed, the real blowup
   of $\CC$ replaces the origin by the set of real lines through the origin, while the real
   {\em oriented} blowup replaces it by the set of {\em oriented} real lines: each point $(0,u)$ of the
   preimage $\tau_{\CC}^{-1}(0) = \{0\} \times \bS^1$ corresponds to the real line passing through
   $0$ and $u \in \bS^1 \hookrightarrow \CC$, oriented from $0$ to $u$ (see \autoref{fig:twobo}).

   The source surfaces of both kinds of blowups of $\CC$ at $0$ are traditionally
   also called {\em blowups of $\CC$}, like the associated morphisms.
   They may be obtained as closures of graphs of maps which are {\em undetermined} at the origin,
   from $\CC$ to particular circles:
    \begin{itemize}
       \item {\em for the real blowup}  $\mathrm{Bl}_0 \CC$, the map is the {\em real projectivization}
          $\boxed{\bP} : \CC= \R^2 \dashrightarrow \bP(\R^2)$,
           which sends each non-zero vector of $\R^2$ to the real line it generates in $\R^2$;
        \item {\em for the real oriented blowup} $ \R_{\geq 0}  \times \bS^1$,
             the map is the \emph{sign function}
            $\mbox{sign} \colon \CC  \dashrightarrow \bS^1$, which
           sends each non-zero vector of $\R^2$ to the {\em oriented} line it generates,
           labeled by the point of the trigonometric circle $\bS^1$ intercepted
           by the positive half-line of that oriented line.
    \end{itemize}
    The blowup maps $\tau_{\CC}$ and $\pi_{\CC}$ are parts of the following commutative diagram:
       \[  \xymatrix{
                                \bS^1
                                  \ar[d]_{\bP} &
                                                    \R_{\geq 0}  \times \bS^1   \ar[l]   \ar[d]   \ar[rd]^{\tau_{\CC}} &  \\
                                    \bP(\R^2)    & \mbox{Bl}_0 \CC  \ar[l] \ar[r]_{\pi_{\CC}}    &  \CC. }  \]
      The left vertical arrow $\boxed{\bP}$ is the restriction to $\bS^1$ of the real projectivisation map
      $\bP : \CC= \R^2 \dashrightarrow \bP(\R^2)$ above, while the horizontal arrows oriented to the left are
      induced by the projections on the second factors of the cartesian products
      $  \R_{\geq 0}  \times \bS^1$ and $\R^2 \times  \bP(\R^2)$.

 \begin{remark}   \label{rem:twoaspectsrobu}
    The real oriented blowup $\tau_{\CC}$ of formula (\ref{eq:changepolarcomplexmap})
 has two aspects:
   \begin{itemize}
       \item a {\bf topological aspect}: $\tau_{\CC}$ replaces the {\em algebro-geometric boundary}
           $0$ of $\CC$ by a
          {\em topological boundary}, that is, it is a homeomorphism outside them.
       \item an {\bf algebraic aspect}:  $\tau_{\CC}$ is a morphism of multiplicative {\em monoids}.
   \end{itemize}
      We will see a third {\em logarithmic aspect} in \autoref{rem:newvpolcoord}.
      In the successive generalizations of the real oriented blowup to be discussed in the sequel,
      {\em algebro-geometric boundaries will be again transformed into topological
      boundaries} and {\em monoids} will be crucial ingredients of such constructions.
  \end{remark}

   Let us recall the definitions of {\em monoids} and of their morphisms:

   \begin{definition} \label{def:monoid}
    A {\bf monoid} is a set endowed with an associative binary operation which
    has a neutral element. The monoid is {\bf commutative} if the operation is so.
    In this case, the neutral element is denoted by $0$ if the operation is written additively
    and by $1$ if it is written multiplicatively.
    A {\bf morphism of monoids} is a function from one monoid to a second one which
    respects neutral elements and the composition laws. A {\bf submonoid} of a given monoid is a
    subset which contains the neutral element and is closed under composition. A monoid is
    {\bf trivial} if it is reduced to its neutral element.
\end{definition}

   In the sequel we will be only interested in {\em commutative monoids}. Note that in the literature,
   monoids are also called {\em semigroups}, even if strictly speaking this last notion is
   more general, as a semigroup is a set endowed with an associative binary operation
   which may lack a neutral element. For instance, $(\Z_{> 0} , +)$ is a semigroup but not a monoid,
   by contrast with $(\N = \Z_{\geq 0} , +)$, which is a monoid. The main types of monoids to be
   considered in this paper are:
      \begin{itemize}
           \item  the multiplicative monoids $\CC^*$, $\CC$, $\R_{\geq 0}$, $\bS^1$,
           \item  {\bf lattices}, that is, free and finitely generated abelian groups,
           \item {\bf finite-dimensional real vector spaces},
           \item {\bf commutative rings} seen as monoids under multiplication,
      \end{itemize}
 as well as special kinds of {\em submonoids} thereof
 (see Definitions \ref{def:toricmonoid},  \ref{def:ratpolcone} and \ref{def:satmonoid}).

 \medskip
Let us come back to the real oriented blowup morphism $\tau_{\CC}$ of formula
\eqref{eq:changepolarcomplexmap}. One gets a first generalization of  it simply
by taking its finite powers:

\begin{definition}   \label{def:robuCn}
    The {\bf real oriented blowup of $\CC^n$ along its coordinate hyperplanes}
    is the $n$-th cartesian power of the real oriented blowup $\tau_{\CC}$ of
    formula \eqref{eq:changepolarcomplexmap}:
     \begin{equation}   \label{eq:realorbudimn}
              \begin{array}{cccc}
                \boxed{ \tau_{\CC^n}} : &   (\R_{\geq 0}  \times \bS^1)^n & \to  &  \CC^n    \\
                          &   (r_1, u_1, \dots, r_n, u_n)        & \mapsto  & (r_1 \cdot u_1, \dots, r_n   \cdot u_n)
               \end{array}      .
   \end{equation}
   \end{definition}

   The source space $(\R_{\geq 0}  \times \bS^1)^n$ is a manifold with corners, therefore
   it is a topological manifold-with-boundary. Denote by
   $\partial_{top} ((\R_{\geq 0}  \times \bS^1)^n)$ its boundary.
   The restriction of the map  $\tau_{\CC^n}$ to the complement of the boundary is a homeomorphism onto
    $( \CC^*)^n$:
         \[    (\R_{> 0}  \times \bS^1)^n  \xrightarrow{\sim}    (\CC^*)^n .  \]
    Therefore,  $ \tau_{\CC^n}$ may be seen topologically as a {\em replacement} of the
   {\bf algebro-geometric boundary} $\boxed{\partial \CC^n}$ of $\CC^n$,  equal to the
   union of coordinate hyperplanes, by the {\bf topological boundary}
   $\boxed{\partial_{top} ((\R_{\geq 0}  \times \bS^1)^n)}$ of $(\R_{\geq 0}  \times \bS^1)^n$.

   Let us look slightly differently at the map \eqref{eq:realorbudimn}, by relating both its source and
   its target to the additive monoid $(\N^n, +)$, which is a finitely generated submonoid
   of the lattice $(\Z^n, +)$. Namely, one may interpret the two spaces as
   {\em sets of morphisms in the category of monoids}:
        \begin{equation*}   \label{eq:monoidinterpr}
              \begin{array}{ccc}
                 (\R_{\geq 0}  \times \bS^1)^n    & =  & \Hom(\N^n,      \R_{\geq 0}  \times \bS^1),   \\
                  \CC^n &   =  &  \Hom(\N^n,    \CC).
               \end{array}
        \end{equation*}
   Here the targets of the two sets of morphisms
   are seen as multiplicative monoids. The advantage of this viewpoint is that
   it makes visible the fact that the complex $n$-dimensional real oriented blowup $ \tau_{\CC^n} $ of
   formula \eqref{eq:realorbudimn} is induced by the complex one-dimensional
   real-oriented blow-up $ \tau_{\CC} $ of formula \eqref{eq:changepolarcomplexmap} simply
   by post-composing with it:
        \begin{equation}   \label{eq:realorbudimnreint}
              \begin{array}{cccc}
                 \tau_{\CC^n} : & \Hom(\N^n,      \R_{\geq 0}  \times \bS^1)  & \to  &
                                              \Hom(\N^n,    \CC)   \\
                          &  \varphi     & \mapsto  & \tau_{\CC}    \circ \varphi
               \end{array}
     \end{equation}

        In this way, it becomes clear that the same construction may be performed if one replaces
        $\N^n$ by {\em any commutative monoid $\Gamma$}. In particular, if $\Gamma$
        is a finitely generated
        submonoid of a lattice, then it turns out that  $ \Hom(\Gamma,    \CC)$ is the set
        of complex-valued points of an {\em affine toric variety $\tv^{\Gamma}$}
        (see \autoref{def:afftoric}) and the associated morphism
             \begin{equation}   \label{eq:robarbtoricmon}
              \begin{array}{cccc}
                 \tau_{\tv^{\Gamma}} : & \Hom(\Gamma,      \R_{\geq 0}  \times \bS^1)  & \to  &
                                              \Hom(\Gamma,    \CC)   \\
                          &  \varphi     & \mapsto  & \tau_{\CC}    \circ \varphi
               \end{array}
             \end{equation}
         defined by analogy with \eqref{eq:realorbudimnreint} generalizes the real
         oriented blowup construction to those toric varieties (see \autoref{def:robuafftoric}).
         {\em Such morphisms may be glued together and they induce
         the operation of a real oriented blowup of an arbitrary toric variety which may be covered
         by affine toric varieties} (see \autoref{def:robarbtoric}).
         In order to explain these facts, we now turn to toric varieties.

\medskip
\subsection{General complex toric varieties and the lattices associated to their dense tori} $\:$  \label{ssec:basictoric}
\medskip

In this subsection we introduce {\em complex algebraic tori}, {\em affine and general toric varieties
and their boundaries}, {\em affinely covered toric varieties} and {\em morphisms of toric varieties}
 (see \autoref{def:toricvar}). We also introduce two dual lattices canonically associated
 to a complex algebraic torus: the lattice $M$ of
 {\em exponents of monomials} and the lattice $N$ of {\em weight vectors} (see \autoref{def:lattoric}).
\medskip

In the sequel, we will look at complex algebraic varieties only through the closed points of the
associated schemes, in order
to have a common underlying set for such a variety and for its associated complex analytic space.
That is, for us a {\em complex algebraic variety} will mean the ringed subspace of a
Noetherian scheme defined over $\CC$ obtained by restricting its structure sheaf to the subset
of closed points.

If $X$ is such a variety, we denote by $\boxed{\cO_X}$ its structure sheaf, which associates to each
Zariski open subset $U$ of $X$ the $\CC$-algebra $\boxed{\cO_X(U)}$ of regular functions on $U$.
In particular, $\cO_X(X)$ is the $\CC$-algebra of global regular functions on $X$.

{\em Complex toric geometry} is the branch of algebraic geometry which studies
{\em complex toric varieties} and their morphisms, defined as follows:

\begin{definition}   \label{def:toricvar}
    A {\bf complex algebraic torus} is a complex algebraic group isomorphic to
    the multiplicative group $(\CC^*)^n$, for some $n \in \N$.
   A complex algebraic variety $X$ is called {\bf toric} if it is endowed with an action of a complex
   algebraic torus $T$, called its {\bf dense torus},  and if  it has a base point $1$
   whose orbit is an open Zariski dense subset, identified with $T$
            by the multiplication map $T \to X$ which sends $t \in T$ to $t \cdot 1 \in X$.
    A toric variety is called {\bf affine} if it is an affine algebraic variety in the usual sense.
    If a toric variety may be covered by a finite set of affine Zariski open toric subvarieties,
    then it is called an {\bf affinely covered toric variety}.
     A {\bf toric morphism} between two  toric varieties is an algebraic morphism which is
    equivariant relative to the actions of their dense algebraic tori and which restricts to a
     morphism of groups between them. The {\bf toric boundary} $\boxed{\partial X}$
     of a toric variety $X$ is the complement of its dense torus.
\end{definition}

\begin{remark}
   Normal toric varieties (discussed in \autoref{ssec:normaltoric})  are affinely covered,
   as proved by Sumihiro \cite[Corollary 2]{S 74}
   (see also \cite[Section I.2, Theorem 5]{KKMS 73}).
   The terminology {\em affinely covered toric variety} is non-standard, as almost all the literature
   in toric geometry deals with such varieties, a fact which does not create the need to use a
   qualificative. The simplest example of toric
   variety which is not affinely covered is a nodal cubic curve {\huge $\alpha$} in a complex projective plane
   (see \cite[Example 3.A.1]{CLS 11}, noting that we chose this notation as an allusion to the topology
   of a particular real model in an affine chart):
    \begin{itemize}
        \item the smooth locus {\huge $\alpha^{\circ}$} of {\huge $\alpha$} is isomorphic to $\CC^*$ as a
                complex algebraic curve, which may be seen by projecting it from its nodal point;
         \item if we identify {\huge $\alpha^{\circ}$} with $\CC^*$ by a fixed isomorphism, then it acts on
                 itself by multiplication;
          \item this action extends to an algebraic action of $\CC^*$ on {\huge $\alpha$};
          \item any open set which contains both the nodal point and $\CC^*$
              is the whole curve {\huge $\alpha$},  which is projective, therefore not affine.
      \end{itemize}
      Note also  that sometimes, especially in symplectic topology, one drops from the
   definition of toric variety the condition that $X$ contains
   $T$, that is, no canonical base point is fixed (see Delzant \cite[Th\'eor\`eme 2.1]{D 88} and
   Cannas da Silva \cite[Chapter 28]{C 01}).
   Such toric varieties are to those of
   \autoref{def:toricvar} what affine spaces are to vector spaces.
\end{remark}

One works with an abstract complex algebraic torus $T$ and not only with the concrete one
$(\CC^*)^n$ in order to
be able to emphasize the intrinsic constructions, independent of the choice of coordinates. This
is analogous to reasoning in terms of abstract finite-dimensional vector spaces over a field $K$
 instead of in terms of just the vector spaces $K^n$.
 The name {\em torus} given to $((\CC^*)^n, \cdot)$ or to a complex algebraic
 group isomorphic to it comes from the fact that the usual torus $(\bS^1)^n$ of the category
 of manifolds embeds in $(\CC^*)^n$ as a deformation retract. More generally,
 the algebraic group $(K^*)^n$ is the analog of $(\bS^1)^n$ in the category of algebraic varieties
 over an algebraically closed field $K$: one says that an algebraic group defined over $K$
 and isomorphic to $(K^*)^n$ is an {\em algebraic $K$-torus}.

\begin{example}
      The basic examples of complex affine toric varieties are the affine spaces $\CC^n$, for $n \in \N$.
      The dense torus of $\CC^n$ is $(\CC^*)^n$, acting by:
      \[
              \begin{array}{ccc}
                      (\CC^*)^n \times \CC^n   &   \to   &   \CC^n   \\
                        (\underline{t}, \underline{z})  &   \mapsto   &     \underline{t} \cdot \underline{z}.
               \end{array}
      \]
       Here $\boxed{\underline{z}} := (z_1, \dots, z_n)$, $\boxed{\underline{t}} :=  (t_1, \dots, t_n)$
       and $ \boxed{\underline{t} \cdot \underline{z}} := (t_1 \cdot z_1, \dots, t_n \cdot z_n)$.
 \end{example}

{\bf In the sequel we will work only with affinely covered complex toric varieties.}
Most of the time, we will simply write {\em toric variety}, but the reader should interpret this syntagm as
meaning {\em affinely covered toric variety}. The reason why we considered more general
toric varieties in \autoref{def:toricvar} is to formulate an analogous
definition in more general categories than complex algebraic (see \autoref{def:toricvarS}).

Every toric variety $X$ may be {\em encoded combinatorially} by a finite set of objects
(a {\em fan of monoids}, see  definitions \ref{def:fanmonoids} and \ref{def:toricfromfanmon}),
which are defined using the following two lattices canonically associated with the dense torus
of $X$:

\begin{definition}  \label{def:lattoric}
   Let $T$ be a complex algebraic torus. Its {\bf exponent lattice}
   $M$ and its {\bf weight lattice} $N$  are defined by:
     \begin{equation} \label{eq:deflattices}
          \boxed{M}  := \Hom(T, \CC^*)   \mbox{ and } \ \boxed{N}  := \Hom(\CC^*, T),
     \end{equation}
   where the sets of morphisms are taken in the category of groups and their internal
   operation is defined by the group operation in the target.
\end{definition}

The names of the two lattices may be understood by looking at the special case
where $T = (\CC^*)^n$:
   \begin{itemize}
        \item  The elements of $M = \Hom((\CC^*)^n, \CC^*)$ are  the {\bf monomials}
            \[  \boxed{\underline{z}^{\underline{m}}} := z_1^{m_1} \cdots z_n^{m_n},   \]
            where $\boxed{\underline{m}} := (m_1, \dots, m_n) \in \Z^n$.
            Their multiplicative group is therefore canonically isomorphic to the additive
            group $(\Z^n, +)$ through the map
             $\underline{z}^{\underline{m}} \mapsto \underline{m}$. The convention is to always look at the
             group law of $M$ additively. It is for this reason that $M$ is seen as the lattice of
             {\em exponents of monomials}.
        \item  The elements of $ N = \Hom(\CC^*, (\CC^*)^n)$ are exactly the {\bf monomial curves},
          also called
        {\bf one-parameter subgroups} (even when the corresponding map is not an embedding)
             \begin{equation} \label{eq:moncurves}
              \begin{array}{ccc}
                      \CC^*  &   \to   &   (\CC^*)^n   \\
                         \lambda  &   \mapsto   &  \boxed{\lambda^{\underline{w}}} :=
                                                               (\lambda^{w_1}, \dots, \lambda^{w_n})
               \end{array}
             \end{equation}
           where $\boxed{\underline{w}} := (w_1, \dots, w_n) \in \Z^n$. By composing such a morphism
           with a monomial $\underline{z}^{\underline{m}} :  (\CC^*)^n \to \CC^*$, we get:
              \begin{equation}     \label{eq:expscalprod}
                 \underline{z}^{\underline{m}} \circ  \lambda^{\underline{w}} =
                           \lambda^{w_1 m_1 + \cdots + w_n m_n}.
              \end{equation}
            That is, a monomial curve may be seen as a way to attribute the weight $w_j$ to
            the variable $z_j$, which explains why $\underline{w}$ may be thought of as a
            {\bf weight vector}.
  \end{itemize}

    Looking at the exponents in formula \eqref{eq:expscalprod}, we get the following pairing:
         \begin{equation}   \label{eq:extrpair}
              \begin{array}{ccc}
                      \Z^n \times \Z^n &   \to   &   \Z   \\
                        (\underline{w}, \underline{m})  &   \mapsto   &
                          \boxed{ \underline{w} \cdot\underline{m}} := w_1 m_1 + \cdots + w_n m_n.
               \end{array}
        \end{equation}
      This pairing is {\bf unimodular}, that is, the determinant of its matrix relative to any couple of bases
      of the two lattices is $\pm 1$.

      We will interpret the pairing \eqref{eq:extrpair} intrinsically,
      starting from an arbitrary algebraic torus $T$.
      For this purpose we will use the notion of the {\bf degree} $\deg(\phi)$ of a morphism
      $\phi \in \Hom(\CC^*, \CC^*)$. Such a morphism $\phi$ is of the form
      $\phi(\lambda) = \lambda^d$, for some $d \in \Z$. We then set $\boxed{\deg(\phi)} := d$.
      Given $w \in N$ and $m \in M$, we may look at them as morphisms of groups
          \begin{equation} \label{eq:subgpmonomial}
              \boxed{\lambda^w} : \CC^* \to T,  \  \   \boxed{\chi^m} : T \to \CC^*.
          \end{equation}
      The composed morphism $\chi^m \circ \lambda^w: \CC^* \to \CC^*$ of groups
      allows to get the promised intrinsic description of the bilinear pairing \eqref{eq:extrpair}:
       \begin{equation}  \label{eq:canpairing}
              \begin{array}{ccc}
                      N \times M &   \to   &   \Z   \\
                        (w, m)  &   \mapsto   &    \deg(\chi^m \circ \lambda^w).
               \end{array}
        \end{equation}

    Since the pairing \eqref{eq:extrpair} is unimodular, it follows that the pairing \eqref{eq:canpairing}
    is also unimodular, for every complex algebraic torus
    $T$. Therefore, {\em the associated lattices $M$ and $N$ of} \eqref{eq:deflattices}
    {\em are naturally dual to each other}:
       \begin{equation} \label{eq:mutual}
             N = \Hom(M, \Z), \  M = \Hom(N, \Z).
       \end{equation}

   \begin{remark}
      The decision to denote the two lattices by the capital letters $M$ and $N$ goes back to
      \cite[Chapter I.1]{KKMS 73}. If $M$ is the initial of ``monomial'', the letter $N$
      is not the initial of any object connected to the context. For this reason, in his thesis the author
      denoted the weight lattice by the capital letter $W$ instead of $M$
      (see \cite[Section 4]{PP 04}). This notation has also the
      advantage that one exchanges the letters $M$ and $W$ by a simple plane involution,
      which points to the duality of the two lattices.  Nevertheless, in this text we keep the
      standard notations.
  \end{remark}

       \begin{definition}
            Let $w \in N$ and $m \in M$. Then, the corresponding morphisms of groups
            $\boxed{\lambda^w}$ and $\boxed{\chi^m}$
            of equation \eqref{eq:subgpmonomial}  are called the {\bf one-parameter subgroup
            with exponent $w$} and the {\bf monomial with exponent $m$} respectively.
       \end{definition}

    In formula \eqref{eq:deflattices}, we defined the lattice $M$ starting from the algebraic torus $T$.
    Conversely, $T$ may be reconstructed from $M$ as follows:
       \begin{equation}   \label{eq:reconstrtorus}
            T = \Hom(M, \CC^*).
       \end{equation}
    Therefore, each one of the objects $M, N, T$ determines canonically the other two
    as a group of morphisms, through the formulae \eqref{eq:deflattices},
    \eqref{eq:mutual} and \eqref{eq:reconstrtorus}. In particular,
    objects associated to $T$ may be also seen as objects associated to $M$
    or to $N$.

    Note that the group structure of a complex algebraic torus $T$ is determined
    by its structure of an affine algebraic variety and by its base point $1 \in T$. Indeed
    (we leave the proof as an exercise):

    \begin{proposition}
         The monomials $\chi^m : T \to \CC^*$ are the only functions $f \in \cO_T(T)$ such that:
            \begin{itemize}
                \item $f$ is nowhere vanishing;
                \item $f(1) = 1$.
            \end{itemize}
    \end{proposition}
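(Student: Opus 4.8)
The plan is to reduce the statement to the determination of the units of a Laurent polynomial ring. First I would fix an isomorphism $T \cong (\CC^*)^n$ carrying the base point $1$ to $(1,\dots,1)$; since the two conditions imposed on $f$ (nowhere-vanishing, value $1$ at $1$) are intrinsic, no generality is lost by doing so. Under this identification $\cO_T(T)$ becomes the Laurent polynomial ring $\CC[z_1^{\pm 1}, \dots, z_n^{\pm 1}]$ and the monomials $\chi^m$ become the functions $\underline{z}^{\underline{m}}$. The easy direction is then immediate: each $\underline{z}^{\underline{m}}$ is nowhere vanishing on $(\CC^*)^n$ and takes the value $1$ at $(1,\dots,1)$.

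For the converse I would first observe that a nowhere-vanishing $f \in \cO_T(T)$ is in fact a \emph{unit} of $\cO_T(T)$: on the affine variety $T$, if $f$ has empty vanishing locus then the Nullstellensatz forces the ideal $(f)$ to be the whole ring, so $1/f$ is again regular. Thus the problem becomes the purely algebraic one of describing the unit group of $\CC[z_1^{\pm 1}, \dots, z_n^{\pm 1}]$.

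The heart of the argument is the claim that for any integral domain $R$ the units of $R[t^{\pm 1}]$ are exactly the products $u\, t^k$ with $u \in R^\times$ and $k \in \Z$. I would prove this by a degree count: for a nonzero Laurent polynomial $g = \sum_i a_i t^i$ over a domain, let $d^+(g)$ and $d^-(g)$ be the largest and smallest $i$ with $a_i \neq 0$; because $R$ has no zero divisors the leading and trailing coefficients multiply nontrivially, so $d^{\pm}(gh) = d^{\pm}(g) + d^{\pm}(h)$. If $fg = 1$, then both $d^+$ and $d^-$ of the product vanish, which forces $d^+(f)-d^-(f)$ and $d^+(g)-d^-(g)$, both nonnegative, to sum to $0$; hence each is $0$, so $f$ and $g$ are single terms $\alpha t^k$ and $\beta t^{-k}$ with $\alpha\beta = 1$ and $\alpha \in R^\times$. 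Feeding this into an induction on $n$, writing $\CC[z_1^{\pm 1}, \dots, z_n^{\pm 1}] = R[z_n^{\pm 1}]$ with $R = \CC[z_1^{\pm 1}, \dots, z_{n-1}^{\pm 1}]$ a domain and with base case $\CC^\times = \CC^*$, identifies the units of $\CC[z_1^{\pm 1}, \dots, z_n^{\pm 1}]$ with the set $\{c\, \underline{z}^{\underline{m}} : c \in \CC^*,\ \underline{m} \in \Z^n\}$.

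Finally I would impose the normalization: writing such a unit as $f = c\, \underline{z}^{\underline{m}}$ and evaluating at the base point gives $f(1) = c$, so the condition $f(1) = 1$ forces $c = 1$ and $f = \chi^m$. The only genuinely substantive step is the unit computation via the degree argument; everything else is bookkeeping. The point to be careful about is the identification of "nowhere-vanishing" with "unit," which rests on $T$ being affine — this is precisely why the hypothesis $f \in \cO_T(T)$, i.e. that $f$ is a \emph{global} regular function on the affine torus, is essential.
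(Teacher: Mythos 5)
Your proof is correct. Note that the paper gives no proof of this proposition at all — it is explicitly left as an exercise to the reader — so there is no in-paper argument to compare against. Your route (fix a group isomorphism $T \cong (\CC^*)^n$, use the Nullstellensatz on the affine variety $T$ to upgrade ``nowhere vanishing'' to ``unit of $\cO_T(T)$'', compute the unit group of $\CC[z_1^{\pm 1},\dots,z_n^{\pm 1}]$ by the degree argument over a domain with induction on $n$, then kill the constant by evaluating at the identity) is the standard and complete solution to that exercise; in particular there is no circularity in identifying $\chi^m$ with $\underline{z}^{\underline{m}}$, since the only facts you use about characters are that each Laurent monomial is one and that any character is a nowhere-vanishing regular function fixing $1$, both trivial.
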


    Therefore, the complex algebra $\cO_T(T)$ of global regular functions on $T$
    may be also seen as the {\bf algebra
     $\boxed{\CC[M]}$ of the lattice $M$} (see also \autoref{def:groupalg}),
     whose elements are the formal finite linear
     combinations with complex coefficients of elements of $M$. More precisely,
     the following map is an isomorphism of algebras:
        \[  \begin{array}{ccc}
                  \cO_T(T)   &   \to    &   \CC[M]   \\
                   \sum_m c_m \chi^m  &   \mapsto    &    \sum_m c_m m
            \end{array} .  \]
     In particular, $T$ may be seen as the set of complex-valued points
     of the spectrum of this algebra:
         \begin{equation}   \label{eq:reconstrspec}
               T = \Hom(\Spec \CC,   \Spec \CC[M]).
          \end{equation}
    If the morphisms  in equality \eqref{eq:reconstrtorus} were taken in the category of abelian groups,
    in equality \eqref{eq:reconstrspec} they are taken instead in the category of complex affine varieties.
    The first viewpoint endows the set $T$ with its group structure and the second one
    with its structure of a complex algebraic variety.

    \medskip
    \subsection{Affine toric varieties and their monoids of exponents}   $\  $ \label{ssec:}
    \medskip

    In this subsection we introduce {\em toric monoids} (see \autoref{def:toricmonoid})
    and their associated {\em affine toric varieties} (see \autoref{def:afftoric}). We explain
    that all affine toric varieties are determined by a corresponding toric monoid
    (see \autoref{prop:allafftoric}).
    \medskip

Let now $X$ be an arbitrary {\em affine} complex toric variety. As its dense torus $T$ is
a dense Zariski open subset of $X$, the associated restriction morphism
  \begin{equation}  \label{eq:embafftoric}
       \cO_X(X) \to \cO_X(T) = \cO_T(T)
  \end{equation}
of complex algebras is injective and birational. It may be shown by looking at the
eigenvectors of the action of $T$ on it that the subalgebra $\cO_X(X) $ of $\cO_T(T)$
is generated by monomials (see \cite[Proposition 1, page 4]{KKMS 73}). The exponents
of the monomials contained in $\cO_X(X) $ form obviously a {\em submonoid} $\Gamma$
of the lattice $(M, +)$. As $X$ is assumed to be Noetherian, this monoid is finitely generated.
As the morphism \eqref{eq:embafftoric} is birational,  the embedding $\Gamma \hookrightarrow M$
is isomorphic to the embedding of $\Gamma$ in the {\em group generated by it}. This is
an opportunity to introduce more vocabulary related to monoids (see \cite[Section I.1.3]{O 18}):

\begin{definition}   \label{def:intmon}
    Let $\Gamma$ be a  monoid. Its {\bf edge} $\boxed{\Gamma^{\star}}$ is its subgroup of invertible
    elements. A monoid is called {\bf sharp} if its edge is the trivial submonoid.
    The {\bf group generated by $\Gamma$} is denoted by $\boxed{\Gamma^{\gp}}$.
    The monoid $\Gamma$ is called {\bf integral} if the natural morphism of monoids
    $\Gamma \to \Gamma^{\gp}$ is injective and {\bf unit-integral} if its restriction
    $\Gamma^{\star} \to \Gamma^{\gp}$ to the edge of $\Gamma$ is injective.
\end{definition}

\begin{remark}  \label{rem:terminedge}
      The terminology {\em sharp monoid} is taken from  \cite[Section I.1.3]{O 18}.
         Instead, the terminology {\em edge of a monoid} is non-standard (for instance,
         in \cite[Section I.1.3]{O 18} $\Gamma^{\star}$ is called the {\em group of units}
         of $\Gamma$, pointing to a
         multiplicative interpretation of the operation of $\Gamma$). We chose this terminology
         by thinking about the special case of a monoid equal to a dihedral angle $D$ in a
         $3$-dimensional real vector space, that
         is, at the intersection of two closed half-spaces whose boundary planes contain the origin. Then,
         $D$ is a monoid with respect to the usual addition of vectors
         and $D^{\star}$ is the usual edge of the dihedral angle,
         obtained as the intersection of the two boundary planes.
\end{remark}

\begin{remark}
     The natural morphism $\Gamma \to \Gamma^{\gp}$ is the universal morphism of $\Gamma$
     to an abelian group. Its injectivity is equivalent to the cancellation property:
        \[   \gamma_1 + \gamma = \gamma_2 + \gamma \   \implies \  \gamma_1  = \gamma_2, \
              \mbox{ for every } \gamma_1,  \gamma_2, \gamma  \in \Gamma. \]
      For this reason, integral monoids are also called {\em cancellative monoids}.
      In turn, a monoid is unit-integral if and only if one has the weaker cancellation property:
         \[   \gamma_1 + \gamma = \gamma_2 + \gamma \   \implies \  \gamma_1  = \gamma_2, \
              \mbox{ for every } \gamma_1,  \gamma_2 \in \Gamma \mbox{ and } \gamma \in \Gamma^{\star}.\]
\end{remark}

The fact that finitely generated submonoids of lattices determine affine toric varieties
explains the following terminology, where such monoids are described intrinsically, without
an a priori reference to an ambient lattice:

\begin{definition}  \label{def:toricmonoid}
     A commutative monoid is called {\bf toric} if it is finitely generated, integral and if
     the group generated by it is a lattice.
\end{definition}

Let us come back to an affine complex toric variety $X$ with dense torus $T$.
The considerations of the paragraph preceding \autoref{def:intmon} lead to the following
identification of $\CC$-algebras:
    \begin{equation} \label{eq:eqalg}
          \cO_X(X) = \CC[\Gamma],
    \end{equation}
where $\Gamma$ is a toric submonoid of the lattice $M$ of exponents of monomials of $T$
such that $\Gamma^{\gp} = M$. Here, $ \CC[\Gamma]$ denotes the {\em complex monoid algebra}
of the monoid $\Gamma$, which we next define.

\begin{definition}   \label{def:groupalg}
   Let $(\Gamma, +)$ be a monoid. Its {\bf complex monoid algebra} $ \CC[\Gamma] $
   is the complex vector space freely generated by $\Gamma$.
   Conventionally, the element of $\CC[\Gamma]$ associated to $m \in \Gamma$
   is denoted $\boxed{\chi^m}$ and is called the {\bf monomial with exponent $m$}.
    The product in the ring $\CC[\Gamma]$  is defined by:
      \[  ( \sum_{m \in \Gamma} a_m  \chi^m ) ( \sum_{p \in \Gamma} b_p  \chi^p ) :=
            \sum_{m, p \in \Gamma } a_m b_p \chi^{m+p} .\]
\end{definition}

Note that the complex algebra $\CC[M]$ of the lattice $M$, introduced in the last paragraph of
\autoref{ssec:basictoric}, is a particular example of a complex monoid algebra.

Similarly to the way in which the dense torus $T$ may be
reconstructed from the group $(M, +)$ (see the equalities \eqref{eq:reconstrtorus} and
\eqref{eq:reconstrspec}), the affine toric
variety $X$ can be reconstructed from the monoid $(\Gamma, +)$ of monomials of
$T$ which are regular on $X$ as:
        \begin{equation*}   \label{eq:reconstrafftoricvar}
            X = \Hom(\Gamma, \CC).
       \end{equation*}
 This time, {\em the morphisms are taken in the category of monoids} and $\CC$ is seen
 as a {\em multiplicative monoid}. Note that if we perform this construction starting
 for the monoid $\Gamma = (M, +)$, we get the complex torus $T$, as
    \[ \Hom(M, \CC) = \Hom(M, \CC^*) \]
 in the category of monoids. Indeed, as $M$ contains only invertible elements and as
 the neutral element $0$ of $M$ must be sent to the neutral element $1$ of $\CC$, no element
 of $M$ can be sent to $0 \in \CC$ by an element of $\Hom(M, \CC)$.

 Let us introduce the following notation for the toric variety defined by a toric monoid $\Gamma$:

 \begin{definition}  \label{def:afftoric}
     Let $(\Gamma, +)$ be a toric monoid. The {\bf associated complex affine
     toric variety $\boxed{\tv^\Gamma}$} is the variety of closed points of the scheme
     $\Spec \CC [\Gamma]$, whose underlying set is  $\Hom(\Gamma, \CC)$.
     Then, $(\Gamma, +)$ is called the {\bf exponent monoid of the variety $\tv^\Gamma$}.
 \end{definition}

 The reason why we write $\Gamma$ as an exponent in $\tv^\Gamma$ is explained in \autoref{rem:positionmon}.

\begin{example}  \label{ex:smoothafftoric}
    Assume that $\Gamma = \N^n$. Then, $\tv^{\N^n} =  \Hom(\N^n, \CC) = \CC^n$. The last equality
    is a consequence of the fact that giving a morphism of monoids
    from $(\N^n, +)$ to $(\CC, \cdot)$
    is the same as giving $n$ morphisms of monoids from $(\N, +)$ to $(\CC, \cdot)$, that is,
    $n$ complex numbers, the images of the generators $1$ in each of the $n$ copies
    of the monoid $(\N, +)$.
\end{example}

Our considerations leading to the equality \eqref{eq:eqalg} show that:

\begin{proposition}  \label{prop:allafftoric}
     Each complex affine toric variety is isomorphic to a variety of the form $\tv^{\Gamma}$,
     for a well-defined toric monoid $\Gamma$.
\end{proposition}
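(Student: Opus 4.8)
The plan is to prove Proposition~\ref{prop:allafftoric} by essentially unwinding the construction that precedes Equation~\eqref{eq:eqalg}, showing that every affine toric variety arises from its monoid of regular monomials. Let $X$ be an arbitrary affine complex toric variety with dense torus $T$, and let $M := \Hom(T, \CC^*)$ be the exponent lattice of $T$ as in Definition~\ref{def:lattoric}. The first step is to exhibit the monoid $\Gamma$: since $T$ is a dense Zariski open subset of $X$, the restriction map $\cO_X(X) \to \cO_T(T)$ of Equation~\eqref{eq:embafftoric} is injective, so I may regard $\cO_X(X)$ as a $\CC$-subalgebra of $\cO_T(T) = \CC[M]$. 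I then set $\Gamma := \{ m \in M : \chi^m \in \cO_X(X) \}$, the set of exponents of the monomials that happen to be regular on $X$. Because $\cO_X(X)$ is closed under multiplication and contains the constant $1 = \chi^0$, the set $\Gamma$ is closed under the additive operation of $M$ and contains $0$, hence is a submonoid of $(M,+)$.

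The second step is to verify that $\Gamma$ is in fact a \emph{toric} monoid in the sense of Definition~\ref{def:toricmonoid}, that is, finitely generated, integral, and with $\Gamma^{\gp}$ a lattice. Integrality is immediate: $\Gamma$ is a submonoid of the lattice $M$, so the cancellation property holds and $\Gamma \hookrightarrow \Gamma^{\gp} \subseteq M$ is injective, with $\Gamma^{\gp}$ a subgroup of a lattice, hence itself a lattice. For finite generation I invoke the structural facts already cited in the excerpt: by the eigenvector argument (\cite[Proposition~1, page~4]{KKMS~73}) the subalgebra $\cO_X(X)$ of $\cO_T(T)$ is generated by monomials, and since $X$ is Noetherian this subalgebra is a finitely generated $\CC$-algebra; a finite set of monomial generators yields a finite set of monoid generators for $\Gamma$. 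This gives $\cO_X(X) = \CC[\Gamma]$, which is precisely Equation~\eqref{eq:eqalg}.

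The third and final step is to pass from the algebra identification to the identification of varieties. Since $X$ is affine, it is determined up to isomorphism by its coordinate algebra $\cO_X(X)$; concretely $X$ is the variety of closed points of $\Spec \cO_X(X)$. Combining this with $\cO_X(X) = \CC[\Gamma]$ and Definition~\ref{def:afftoric}, which sets $\tv^\Gamma$ to be the variety of closed points of $\Spec \CC[\Gamma]$, yields an isomorphism $X \cong \tv^\Gamma$. I should check that this isomorphism is compatible with the toric structures, i.e.\ that it intertwines the torus actions; but this follows because the whole construction was torus-equivariant, the monomials $\chi^m$ being exactly the eigenvectors of the $T$-action, so the reconstructed torus of $\tv^\Gamma$ is canonically identified with $T$.

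The main obstacle, and the only genuinely nontrivial input, is the claim that $\cO_X(X)$ is generated by monomials and finitely generated. Everything else is bookkeeping with submonoids and the antiequivalence between affine varieties and their coordinate algebras. Fortunately this input is exactly the content of the citation \cite[Proposition~1, page~4]{KKMS~73} already invoked in the text, so in the write-up I would lean on it rather than reprove the eigenspace decomposition of the $T$-representation $\cO_X(X)$; a self-contained proof would require diagonalizing the torus action and arguing that each graded piece is one-dimensional and spanned by a monomial, which is a standard but separate computation.
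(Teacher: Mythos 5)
Your proposal follows the paper's own route essentially verbatim: restrict regular functions to the dense torus, invoke \cite[Proposition 1, page 4]{KKMS 73} to see that $\cO_X(X)$ is generated by monomials, observe that the exponents of those monomials form a finitely generated submonoid $\Gamma$ of $M$, and conclude $X \cong \tv^{\Gamma}$ from $\cO_X(X) = \CC[\Gamma]$ via the antiequivalence between affine varieties and their coordinate algebras. The one detail the paper makes explicit that you elide is that the birationality of the restriction morphism forces $\Gamma^{\gp} = M$ (not merely that $\Gamma^{\gp}$ is some lattice), which is precisely what underwrites your final claim that the dense torus of $\tv^{\Gamma}$ is canonically identified with $T$; otherwise the two arguments coincide.
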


\medskip
\subsection{Morphisms of affine toric varieties}   $\  $   \label{ssec:morafftoric}
\medskip

In this subsection we explain how to describe all the toric morphisms between affine toric varieties using
morphisms between their associated toric monoids (see \autoref{prop:toricmorph}).
As a special case, we describe all the toric-invariant affine Zariski-open subsets of affine
toric varieties (see \autoref{prop:chartoraffopen}). Such open subsets are important because
all affinely covered toric varieties are obtained from affine ones by gluing them along such open subsets,
a fact which will be explained in \autoref{ssec:combdescrtorvar}.
\medskip

Let us now turn to discussing the toric morphisms between affine toric varieties, as introduced in
\autoref{def:toricvar}.

\begin{example}  \label{ex:monmorph}
    Toric morphisms $\psi : \CC^n \to \CC^l$ are {\em monomial} morphisms,
    that is, morphisms of the form
         $\psi(\underline{z}) = (\underline{z}^{m_1}, \dots, \underline{z}^{m_l})$, with
         $m_1, \dots, m_l \in \N^n$ (this explains the use of the term {\em monomials} in
         the citation from \cite{KKMS 73} at the beginning of \autoref{sec:introtoric}).
          For instance, the toric morphisms from $\CC^2$ to $\CC^3$
         are those of the form:
            \[
              \begin{array}{cccc}
                   \psi :  & \CC^2 &   \to   &   \CC^3   \\
                      & (z_1, z_2)   &   \mapsto   &  (z_1^{a_1} z_2^{a_2} ,
                          z_1^{b_1} z_2^{b_2}, z_1^{c_1} z_2^{c_2})
               \end{array}
             \]
          where all exponents are non-negative integers.
\end{example}

Using \autoref{prop:allafftoric} and the fact that a toric morphism must send monomials to monomials,
one can prove that toric morphisms between arbitrary
affine toric varieties may be described using morphisms between their exponent monoids
in the sense of \autoref{def:afftoric}:

\begin{proposition}   \label{prop:toricmorph}
    Let $(\Gamma_1, +)$ and $(\Gamma_2, +)$ be two toric monoids. The toric morphisms from
    the affine toric variety $\tv^{\Gamma_1}$ to $\tv^{\Gamma_2}$ are those of the form:
          \begin{equation*}  \label{eq:morphtoricgeom}
              \begin{array}{cccc}
                   \boxed{\chi^{\mu}} :  & \tv^{\Gamma_1} &   \to   &   \tv^{\Gamma_2}   \\
                      & x_1   &   \mapsto   &  x_1 \circ \mu ,
               \end{array}
        \end{equation*}
     where $\mu$ varies in the set $\Hom(\Gamma_2, \Gamma_1)$ of morphisms of monoids
     and where $x_1 \circ \mu$ denotes the composition:
        \[ \Gamma_2  \xrightarrow{\mu} \Gamma_1  \xrightarrow{x_1} \CC. \]
     The associated morphism of monoid algebras is:
          \begin{equation*}  \label{eq:morphtoricalg}
              \begin{array}{cccc}
                   \boxed{\chi_{\mu}} :  & \CC[\Gamma_2] &   \to   &   \CC[\Gamma_1]   \\
                      & \chi^m   &   \mapsto   &  \chi^{\mu(m)}.
               \end{array}
        \end{equation*}
\end{proposition}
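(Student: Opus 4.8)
The plan is to reduce the statement to the standard anti-equivalence between affine complex algebraic varieties and their $\CC$-algebras of global regular functions, and then to isolate, among all $\CC$-algebra morphisms $\CC[\Gamma_2] \to \CC[\Gamma_1]$, exactly those coming from toric morphisms. Since $\cO_{\tv^{\Gamma_i}}(\tv^{\Gamma_i}) = \CC[\Gamma_i]$ by \eqref{eq:eqalg}, a morphism of affine varieties $\psi : \tv^{\Gamma_1} \to \tv^{\Gamma_2}$ is the same datum as a $\CC$-algebra morphism $\psi^{\ast} : \CC[\Gamma_2] \to \CC[\Gamma_1]$, and on closed points $\tv^{\Gamma_i} = \Hom(\Gamma_i, \CC)$ it acts by precomposition with $\psi^{\ast}$. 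The proposition thus splits into two claims: (sufficiency) for every monoid morphism $\mu : \Gamma_2 \to \Gamma_1$ the prescription $\chi_{\mu}(\chi^m) := \chi^{\mu(m)}$ defines an algebra morphism whose geometric counterpart is the toric morphism $\chi^{\mu}$ sending $x_1 \mapsto x_1 \circ \mu$; and (necessity) every toric $\psi$ has $\psi^{\ast}$ of this special form.

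For sufficiency I would first check that $\chi_{\mu}$ is a well-defined $\CC$-algebra homomorphism: because $\mu$ is a monoid morphism, $\chi^{\mu(m+m')} = \chi^{\mu(m)}\chi^{\mu(m')}$ and $\chi^{\mu(0)} = 1$, so $\chi_{\mu}$ respects products and the unit and extends $\CC$-linearly. Reading off the induced map on closed points (a point being a monoid morphism $x_1 : \Gamma_1 \to \CC$, equivalently an algebra morphism $\CC[\Gamma_1] \to \CC$) shows it sends $x_1$ to $x_1 \circ \mu$, matching the stated formula. It then remains to verify that $\chi^{\mu}$ is toric: it carries the dense torus $T_1 = \Hom(\Gamma_1^{\gp}, \CC^{\ast})$ into $T_2$ (if $x_1$ has image in $\CC^{\ast}$ then so does $x_1 \circ \mu$), its restriction is the group morphism $\Hom(\mu^{\gp}, \CC^{\ast})$ induced by the group completion $\mu^{\gp} : \Gamma_2^{\gp} \to \Gamma_1^{\gp}$, and equivariance is an immediate pointwise computation. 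All of this is routine bookkeeping with the monoid descriptions.

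The substantive direction is necessity, and here the crux is to show that a toric $\psi$ pulls monomials back to monomials whose exponents still lie in $\Gamma_1$. My plan is to descend this from the dense tori. By definition a toric morphism restricts to a group homomorphism $\psi|_{T_1} : T_1 \to T_2$, so for each $m \in \Gamma_2^{\gp}$ the function $\chi^m \circ \psi|_{T_1}$ on $T_1$ is nowhere vanishing and takes the value $1$ at the base point (as $\psi|_{T_1}(1) = 1$); by the Proposition characterizing the monomials as the only such global functions on a torus, it equals $\chi^{\nu(m)}$ for a unique $\nu(m) \in \Gamma_1^{\gp}$, and $\nu$ is a group homomorphism. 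I would then use the commutative square relating $\psi^{\ast}$ to $(\psi|_{T_1})^{\ast}$ through the restriction maps $\CC[\Gamma_i] \hookrightarrow \CC[\Gamma_i^{\gp}] = \cO_{T_i}(T_i)$, which are injective because $T_i$ is Zariski dense in the integral variety $\tv^{\Gamma_i}$. For $m \in \Gamma_2$ this forces $\psi^{\ast}(\chi^m)$, viewed inside $\CC[\Gamma_1^{\gp}]$, to equal the single monomial $\chi^{\nu(m)}$; since $\psi^{\ast}(\chi^m) \in \CC[\Gamma_1] = \bigoplus_{m' \in \Gamma_1} \CC\,\chi^{m'}$, the exponent $\nu(m)$ must belong to $\Gamma_1$. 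Setting $\mu := \nu|_{\Gamma_2}$ yields a monoid morphism $\Gamma_2 \to \Gamma_1$ with $\psi^{\ast}(\chi^m) = \chi^{\mu(m)}$ on the generators of $\CC[\Gamma_2]$, whence $\psi^{\ast} = \chi_{\mu}$ and $\psi = \chi^{\mu}$.

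The main obstacle is precisely this last descent: controlling the pullback of monomials not merely up to a scalar but exactly, and guaranteeing that the resulting exponents land back in $\Gamma_1$ rather than only in the ambient lattice $\Gamma_1^{\gp}$. The value-$1$ normalization supplied by the cited Proposition removes the scalar ambiguity, while the weight-space decomposition of $\CC[\Gamma_1]$ inside $\CC[\Gamma_1^{\gp}]$, together with injectivity of restriction to the dense torus, is what pins the exponents inside $\Gamma_1$. Integrality of the $\Gamma_i$ (so that $\Gamma_i \hookrightarrow \Gamma_i^{\gp}$ and $\CC[\Gamma_i]$ is a domain) and finite generation (so that the affine anti-equivalence applies) are exactly the features of a toric monoid that make this argument go through.
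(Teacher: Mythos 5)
Your proof is correct and follows essentially the same route the paper indicates: the paper states (without details) that the proposition follows from \autoref{prop:allafftoric} together with the fact that a toric morphism must send monomials to monomials, and your necessity argument is precisely a careful proof of that monomial-to-monomial fact, obtained by restricting to the dense torus, invoking the characterization of monomials as the nowhere-vanishing global functions normalized at the base point, and using injectivity of restriction to the dense torus to pin the exponents inside $\Gamma_1$ rather than merely in $\Gamma_1^{\gp}$. Since the paper supplies no written proof, your fully worked-out version is a faithful (and welcome) elaboration of its sketch.
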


\begin{remark}    \label{rem:positionmon}
    If $\Gamma_2 = \N$, then the commutative monoid $(\Hom(\Gamma_2, \Gamma_1), +)$
    gets canonically  identified with $(\Gamma_1, +)$ by sending each element
    $ \mu \in \Hom(\N, \Gamma_1)$ to $\mu(1) \in \Gamma_1$.
    In this way, the toric morphism
     $ \chi^{\mu} :  \tv^{\Gamma_1}   \to    \tv^{\N} = \CC$ gets identified with the monomial
     $\chi^{\mu(1)} \in \CC[\Gamma_1]$. This shows that
     the notations $\chi^m$ of \autoref{def:afftoric} and $\chi^{\mu}$ of
     \autoref{prop:toricmorph} are compatible. Similarly, note that for $\Gamma_1 = \Z$ and
     $\Gamma_2 = M$, the commutative monoid  $(\Hom(\Gamma_2, \Gamma_1), +)$
     gets canonically identified with
     $N$, by the identity $\Hom(M, \Z) = N$ induced by the pairing \eqref{eq:canpairing}.
     With this in mind, $ \mu \in \Hom(M, \Z)$ becomes a weight vector in $N$,  and
     in the notations of formula \eqref{eq:moncurves}, one has
     $\chi^{\mu} = (\lambda \to \lambda^{\mu})$ when $M = \Z^n$.

     Note also that the positions of $\mu$ as
     exponent and index in $\chi^{\mu}$  and $\chi_{\mu}$ respectively, are compatible with the conventions
     of writing the contravariant/covariant functors in homology theory ($H^{\bullet}$ is
     contravariant, while $H_{\bullet}$ is covariant). For the same reason, in the notation
     ``$\tv^{\Gamma}$'', we write the monoid $\Gamma$ as an exponent, as the map
     of objects  $\Gamma \mapsto \tv^{\Gamma}$ is induced by a contravariant functor
     from the category of toric monoids to that of complex affine toric varieties (see also
     \autoref{rem:positionfan}).
\end{remark}

\begin{example}
      Let us consider again a monomial morphism  $\psi : \CC^n \to \CC^l$ defined by
      $\psi(\underline{z}) = (\underline{z}^{m_1}, \dots, \underline{z}^{m_l})$,
      as in \autoref{ex:monmorph}. Then, we have  $\psi = \chi^{\mu}$, where
      $\mu : \N^l \to \N^n$ is such that $\mu(e_i) = m_i$ for every $i \in \{1, \dots, l\}$.
      Here $(e_1 := (1, 0, \dots, 0), \dots, e_l := (0, \dots, 0, 1))$ is the canonical
      freely generating sequence of the monoid $\N^l$. That is, the matrix of $\mu$
      is $(m_1, \dots, m_l)$, where $m_1, \dots, m_l \in \N^n$ are written as column vectors.
      For instance, the morphism
            \[
              \begin{array}{cccc}
                   \psi :  & \CC^2 &   \to   &   \CC^3   \\
                      & (z_1, z_2)   &   \mapsto   &  (z_1^{a_1} z_2^{a_2} ,
                          z_1^{b_1} z_2^{b_2}, z_1^{c_1} z_2^{c_2})
               \end{array}
             \]
        is the morphism $\chi^{\mu}$, where the matrix of $\mu : \N^3 \to \N^2$ is
           \[    \left(  \begin{array}{cccc}
                                a_1 &  b_1   &   c_1 \\
                                a_2 &  b_2   &   c_2
               \end{array}   \right).   \]
\end{example}

By definition, all affinely covered complex toric varieties are obtained by gluing together several affine ones.
Those gluings are done along torus-invariant affine Zariski open
subsets of affine toric varieties. Let us describe those subsets combinatorially. This description depends
on the notion of {\em face} of a monoid (see \cite[Definition 1.4.1]{O 18}):

\begin{definition}   \label{def:facemon}
     Let $(\Gamma, +)$ be a monoid. A {\bf face} of it is a submonoid $\Phi$ of $\Gamma$
     such that:
           \[  \forall \ p, q \in \Gamma, \ (p + q \in \Phi \ \implies \  p, q \in \Phi). \]
\end{definition}

\begin{example}
   If $\Gamma = (\R_{\geq 0})^n$ or $\Gamma = \N^n$
   for some $n \in \N^*$, then the faces of $\Gamma$ are all its intersections
   with the coordinate subspaces of $\R^n$. Therefore, there are $2^n$ of them. Ordered
   by inclusion, the smallest one is the trivial submonoid and the largest one is the whole monoid $\Gamma$.
\end{example}

The following proposition is the announced characterization of the toric-invariant
affine Zariski open subsets of the complex affine toric variety $\tv^{\Gamma}$
(see Gonz\'alez P\'erez and Teissier's  \cite[Lemma 19]{GPT 14}):

\begin{proposition}  \label{prop:chartoraffopen}
      Let $(\Gamma, +)$ be a toric monoid, seen as a submonoid of the lattice $M = \Gamma^{\gp}$.
      Then the inclusion morphisms
      $\chi^{\mu} : \tv^{\Lambda} \hookrightarrow \tv^\Gamma$
      of the affine Zariski-open torus-invariant subsets of $\tv^{\Gamma}$ are given by the
      inclusion morphisms $\mu : \Gamma \hookrightarrow \Lambda$, with $\Lambda$ varying among
      the submonoids of $M$ of the form $\Gamma + \Phi^{\gp}$, where $\Phi$ is a face of $\Gamma$.
\end{proposition}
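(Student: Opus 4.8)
The plan is to set up a bijection between the faces $\Phi$ of $\Gamma$ and the torus-invariant affine Zariski-open subsets of $\tv^{\Gamma} = \Spec \CC[\Gamma]$, sending $\Phi$ to $U_\Phi := \{x \in \tv^\Gamma : x(\gamma) \neq 0 \text{ for all } \gamma \in \Phi\}$ and identifying $\cO(U_\Phi)$ with $\CC[\Gamma + \Phi^{\gp}]$. Throughout I would argue through monoid algebras, using the dictionary of \autoref{prop:toricmorph}: the dense torus $T = \Hom(M,\CC^*)$ acts on $\tv^\Gamma$ by $(t\cdot x)(\gamma) = t(\gamma)\,x(\gamma)$, so that $T$-invariance of a subset or of a regular function translates, after passing to coordinate rings inside $\CC(M)$ (the fraction field of $\CC[M]$), into $M$-homogeneity.

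I would treat the direction from faces to opens first. Given a face $\Phi$, choose a finite generating set $\gamma_1,\dots,\gamma_k$ of $\Phi$ (faces of toric monoids are again finitely generated) and put $\gamma_0 := \gamma_1 + \cdots + \gamma_k$. The face axiom yields the key monoid identity $\Gamma + \Phi^{\gp} = \Gamma + \Z\gamma_0$: indeed $-\gamma_i = \bigl(\sum_{j\neq i}\gamma_j\bigr) - \gamma_0 \in \Gamma + \Z\gamma_0$, and the reverse inclusion is clear since $\gamma_0 \in \Phi$. Consequently $\CC[\Gamma + \Phi^{\gp}]$ is the localization $\CC[\Gamma]_{\chi^{\gamma_0}}$, so the inclusion $\mu : \Gamma \hookrightarrow \Gamma + \Phi^{\gp}$ induces, via \autoref{prop:toricmorph}, an isomorphism $\chi^{\mu} : \tv^{\Gamma + \Phi^{\gp}} \xrightarrow{\sim} D(\chi^{\gamma_0}) = U_\Phi$ onto a principal, hence affine, torus-invariant open of $\tv^\Gamma$. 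That $\Gamma + \Phi^{\gp}$ is a toric monoid with group $M$ is immediate, so $\tv^{\Gamma + \Phi^{\gp}}$ is a genuine affine toric variety.

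For the converse, let $U$ be an arbitrary torus-invariant affine open. Since $U$ is $T$-invariant, $\cO(U)$ is an $M$-graded subring of $\CC(M)$ containing $\CC[\Gamma]$; as the homogeneous components of $\CC(M)$ are the lines $\CC\chi^m$, this ring is forced to be $\CC[\Lambda]$ for the submonoid $\Lambda := \{m \in M : \chi^m \in \cO(U)\}$, which is toric with $\Lambda^{\gp} = M$ and contains $\Gamma$. Thus $U = \tv^\Lambda$ and the open immersion $U \hookrightarrow \tv^\Gamma$ is $\chi^{\mu}$ for $\mu : \Gamma \hookrightarrow \Lambda$. I then set $\Phi := \Gamma \cap \Lambda^{\star}$, where $\Lambda^{\star}$ is the edge of $\Lambda$; since edges are faces and the intersection of the face $\Lambda^{\star}$ with $\Gamma$ is again a face, $\Phi$ is a face of $\Gamma$, and $\Phi^{\gp} \subseteq \Lambda^{\star}$ gives the easy inclusion $\Gamma + \Phi^{\gp} \subseteq \Lambda$. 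It remains to prove the reverse inclusion, equivalently that $U = U_\Phi$.

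This last step is the main obstacle, and it is exactly where the open immersion hypothesis must be converted into combinatorics — dropping it is fatal, as the non-open morphism $\tv^{\{(a,b)\,:\,a+b\geq 0\}} \to \CC^2$ already shows. My plan is to run the orbit--face dictionary. One checks that $\Supp(x) := x^{-1}(\CC^*)$ is a face for every $x \in \tv^\Gamma$, that two points share a $T$-orbit iff they have equal support (lifting the ratio of characters through $\CC^*$, using its divisibility), and that the orbit $O_\Phi$ of a face is $\{x : \Supp(x) = \Phi\}$. The orbit-closure relation $\overline{O_{\Phi'}} = \bigcup_{\Psi \subseteq \Phi'} O_\Psi$ then identifies $T$-invariant open subsets with upward-closed sets of faces. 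Two observations pin down $U$: the distinguished point $x_\Phi$ (equal to $1$ on $\Phi$ and to $0$ off $\Phi$) extends to the character of $\Lambda$ that is $1$ on $\Lambda^{\star}$ and $0$ elsewhere, so $x_\Phi \in U$; and every point of $U$ extends to $\Lambda \supseteq \Phi^{\gp}$, whence its support contains $\Phi$ and $U \subseteq U_\Phi$. Therefore the up-set of faces defining $U$ contains $\Phi$ and lies in $\{\Phi' : \Phi' \supseteq \Phi\}$, and, being upward closed, it equals $\{\Phi' : \Phi' \supseteq \Phi\}$; that is, $U = U_\Phi$. Comparing coordinate rings then forces $\Lambda = \Gamma + \Phi^{\gp}$, completing the bijection. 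The genuinely technical input is the orbit-closure relation; everything else is bookkeeping of monoid faces.
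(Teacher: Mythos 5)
Your proposal is correct, but there is nothing internal to compare it against: the paper does not prove this proposition, it only cites Gonz\'alez P\'erez--Teissier \cite[Lemma 19]{GPT 14}. What you have written is, in substance, a self-contained reconstruction of the orbit--face dictionary that underlies that cited lemma. Several steps are both correct and efficient: the identity $\Gamma + \Phi^{\gp} = \Gamma + \Z\gamma_0$, with $\gamma_0$ the sum of a finite generating set of the face $\Phi$, which exhibits $U_{\Phi}$ as the principal open subset where $\chi^{\gamma_0}$ does not vanish, hence as affine, open and torus-invariant in one stroke; the use of the divisibility of $\CC^*$ to extend the ratio of two points with equal support to an element of $T = \Hom(M, \CC^*)$; and the example $\Lambda = \{(a,b) \in \Z^2 : a+b \geq 0\} \supset \N^2$, which correctly shows that the openness hypothesis cannot be dropped in the converse direction.

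Two points deserve attention before the argument can be called complete. First, the one substantive debt is the orbit-closure relation $\overline{O_{\Phi'}} = \bigcup_{\Psi \subseteq \Phi'} O_{\Psi}$, which you flag but do not prove; since $\tv^{\Gamma}$ need not be normal, you cannot simply invoke the usual orbit--cone correspondence for fans. It can be discharged as follows. The inclusion ``$\subseteq$'' holds because the right-hand side is the closed set $\{x : x(\gamma)=0 \ \text{for all} \ \gamma \in \Gamma \smallsetminus \Phi'\}$. Conversely, given a face $\Psi$ of $\Gamma$ contained in $\Phi'$, one first checks $\Gamma \cap \R_{\geq 0}\Psi = \Psi$ (if $N\gamma \in \Psi$ for some $N \in \N^*$, the face property forces $\gamma \in \Psi$), then chooses $w \in N$ with $w \geq 0$ on $\R_{\geq 0}\Phi'$ and $\{w = 0\} \cap \R_{\geq 0}\Phi' = \R_{\geq 0}\Psi$ (faces of rational polyhedral cones are exposed by rational functionals). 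The map $\lambda \mapsto \left(\gamma \mapsto \lambda^{w(\gamma)} x_{\Phi'}(\gamma)\right)$ is then a curve of points of $O_{\Phi'}$, by the same face-property computation that makes $x_{\Phi'}$ a monoid morphism, and it converges to $x_{\Psi}$ as $\lambda \to 0$; invariance of $\overline{O_{\Phi'}}$ under $T$ then gives $O_{\Psi} \subseteq \overline{O_{\Phi'}}$. Second, a phrasing slip in the converse direction: $\CC(M)$ is not $M$-graded, so instead of speaking of a graded subring of $\CC(M)$ you should argue that a nonempty invariant open $U$ contains the dense orbit $T$, whence restriction embeds $\cO(U)$ into $\cO(T) = \CC[M]$, and a $T$-stable linear subspace of $\CC[M]$ is spanned by the monomials it contains --- exactly the eigenvector fact the paper itself quotes from \cite[Proposition 1, page 4]{KKMS 73}; finite generation of $\Lambda$ then comes from $\cO(U)$ being a finitely generated $\CC$-algebra. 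With these repairs your proof is complete and, unlike the paper, self-contained.
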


\begin{example}   \label{ex:toropenC2}
     Let $\Gamma := \N^2$. Then, we have that $\tv^\Gamma = \CC^2$. Denote $x := \chi^{(1,0)}$
     and $y := \chi^{(0,1)}$.
     Then we have that $\CC[\Gamma] = \CC[x,y]$. The faces of $\Gamma$ are:
        \begin{itemize}
            \item  $\Gamma = \N^2$.
            \item  $ \Phi_1 = \N (1,0)$.
            \item  $ \Phi_2 = \N (0,1)$.
            \item  $O := \{(0,0)\}$.
        \end{itemize}
   The corresponding submonoids of $M = \Z^2$ are:
      \begin{itemize}
            \item  $\Gamma +  \Gamma^{\gp} = \Gamma + M = M = \Z^2$.
            \item  $ \Gamma + \Phi_1^{\gp} = \Gamma + \Z (1,0) = \Z \times \N$.
            \item  $ \Gamma + \Phi_2^{\gp} = \Gamma + \Z (0,1) = \N \times \Z$.
            \item  $\Gamma + O^{\gp} = \Gamma = \N^2$.
        \end{itemize}
    Therefore, the corresponding affine Zariski-open torus-invariant  subsets of $\tv^{\N^2} = \CC^2$ are:
       \begin{itemize}
            \item  $\tv^{\Z^2} = (\CC^*)^2$.
            \item  $\tv^{\Z \times \N} = \CC^* \times \CC$.
            \item  $\tv^{\N \times \Z} = \CC \times \CC^*$.
            \item  $\tv^{\N^2} = \CC^2$.
        \end{itemize}
    This example will be continued in \autoref{ex:dualconesplane}.
\end{example}

     Let $(\Gamma, +)$ be a monoid. One may characterize its faces among its submonoids
     by looking at their complements (see \cite[Section 1.4]{O 18}). Indeed, the defining property
           \[  \forall \ p, q \in \Gamma, \ (p + q \in \Phi \ \implies \  p, q \in \Phi) \]
      of a face $\Phi$ is equivalent to:
             \[  \forall \ a, b \in \Gamma, \ (b  \in \Gamma \smallsetminus \Phi \ \implies \  a +
                       b \in \Gamma \smallsetminus \Phi). \]
      This means, by definition, that $\Gamma \smallsetminus \Phi$ is an {\bf ideal} of the monoid
      $\Gamma$. That is, {\em the faces of $\Gamma$ are exactly the submonoids
      of $\Gamma$ whose complements are ideals of $\Gamma$}.
      Note that only proper ideals may appear as complements of faces, because
      submonoids are necessarily non-empty, as they contain the neutral element $0$ of $(\Gamma, +)$.

      In turn, the ideals $\Pi$ of $\Gamma$ obtained in this way are exactly the ideals
      whose complement $\Gamma \setminus \Pi$ is a submonoid of $\Gamma$, a property
      which may be written in terms of $\Pi$ as:
          \[ \Pi \neq \Gamma \mbox{ and }
               \forall \ p, q \in \Gamma, \ (p + q \in \Pi \ \implies \  p \in \Pi \mbox{ or } q \in \Pi). \]
      By definition, the ideals verifying this conditions are called {\bf prime}. We conclude that:

      \begin{proposition}
         The operation
         \[  \Phi  \  \mapsto \  \Gamma \smallsetminus \Phi \]
         establishes a bijection between the set of faces of $\Gamma$ and the set of
         prime ideals of $\Gamma$.
      \end{proposition}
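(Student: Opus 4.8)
The plan is to exploit the fact that set-complementation inside $\Gamma$, namely $S \mapsto \Gamma \smallsetminus S$, is an involution on the power set of $\Gamma$ and hence automatically a bijection of that power set onto itself. Consequently the entire content of the statement is to verify that this involution carries the set of faces \emph{exactly onto} the set of prime ideals: injectivity of the claimed map comes for free from the involution property, and surjectivity reduces to checking that the complement of a prime ideal is again a face. Both verifications have essentially been carried out in the discussion immediately preceding the statement, so the proof mainly amounts to assembling those observations and handling the contrapositions carefully.

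For the forward direction, suppose $\Phi$ is a face and set $\Pi := \Gamma \smallsetminus \Phi$. By the characterization established just above — a face is precisely a submonoid whose complement is an ideal — we already know $\Pi$ is an ideal, so it only remains to see that it is prime. Properness, $\Pi \neq \Gamma$, holds because the submonoid $\Phi$ contains the neutral element $0$, whence $0 \notin \Pi$. The prime condition $p + q \in \Pi \implies (p \in \Pi \text{ or } q \in \Pi)$ is exactly the contrapositive of the additive closure $p, q \in \Phi \implies p + q \in \Phi$ of $\Phi$; I would write this contraposition out once to keep the translation transparent.

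For the reverse direction, let $\Pi$ be a prime ideal and put $\Phi := \Gamma \smallsetminus \Pi$. Invoking the same characterization, I would show $\Phi$ is a face by checking it is a submonoid whose complement $\Pi$ is an ideal (the latter holding by hypothesis). Here $\Phi$ contains $0$ because $0 \notin \Pi$, which for an ideal is equivalent to properness $\Pi \neq \Gamma$ (as $0 \in \Pi$ would force $a = a + 0 \in \Pi$ for every $a \in \Gamma$), and closure of $\Phi$ under addition is the contrapositive of primeness. Then $\Pi = \Gamma \smallsetminus \Phi$ exhibits $\Pi$ in the image, and together with the automatic injectivity this yields the asserted bijection. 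I do not expect any genuine obstacle: the result is a packaging of the preceding paragraphs, and the only point deserving care is purely logical — matching each defining condition (submonoid closure, ideal absorption, primeness) with its correct contrapositive and using commutativity together with the absorbing role of $0$ in the neutral-element bookkeeping.
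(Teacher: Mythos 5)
Your proof is correct and takes essentially the same route as the paper: the paper deduces the proposition directly from the two characterizations established just before it (faces are exactly the submonoids whose complements are ideals, and prime ideals are exactly the ideals whose complements are submonoids), which are precisely the contrapositions you spell out, matching the neutral-element bookkeeping with properness and the additive closure of $\Phi$ with the primeness of $\Pi$. The only thing you add is the explicit remark that complementation is an involution, which the paper leaves implicit.
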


       \begin{remark} \label{rem:specmonoid}
         The set of prime ideals of a monoid is called, by analogy with the case of rings,
         the {\bf spectrum of the monoid}. In \cite[Sections 5 and 9]{K 94}, Kato introduced a structure
         of {\em monoidal space} on the spectrum of a monoid, that is, of topological space
         endowed with a sheaf of monoids.
      \end{remark}

         We leave it as an exercise to prove that the edge of a monoid in the sense of
         \autoref{def:intmon} may be also characterized
         either as its maximal subgroup or as its minimal face. Note that the complement
         of the edge of a monoid is its unique maximal ideal. Therefore,
         a monoid is an analog of a local ring rather than of an arbitrary ring.

\medskip
\subsection{A combinatorial description of toric varieties using fans of monoids}
$ \ $   \label{ssec:combdescrtorvar}
\medskip

In this subsection we explain how any complex affinely covered toric variety may be seen as the
direct limit of its affine toric Zariski-open subsets (see \autoref{prop:identifdirlimit}). This allows to
encode it by a {\em fan of monoids} (see \autoref{prop:charfinitesetmon},
\autoref{def:fanmonoids} and \autoref{def:toricfromfanmon}).
Finally, we explain how morphisms of such varieties may be combinatorially encoded
by morphisms of fans of monoids (see \autoref{prop:morphtoricgen}).

\medskip

Recall that for us a {\em toric variety} means a {\em complex affinely covered toric variety}
in the sense of \autoref{def:toricvar}. The combinatorial description of such a variety
passes through the following result (folklore), which amounts to saying that it may be obtained
by gluing all its toric affine Zariski open subsets using their natural inclusion morphisms:

\begin{proposition}  \label{prop:identifdirlimit}
     Let $X$ be a complex toric variety. Then,
     the inclusion morphisms $\tv^{\Gamma} \hookrightarrow X$ of all toric affine Zariski open subsets
     of $X$ identify $X$ with the direct limit of those toric affine varieties $\tv^{\Gamma}$,
     endowed with their inclusion morphisms.
\end{proposition}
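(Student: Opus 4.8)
The plan is to verify that $X$, together with the family of open immersions $\iota_\Gamma\colon \tv^\Gamma \hookrightarrow X$ ranging over all toric affine Zariski-open subvarieties of $X$, satisfies the universal property of the colimit of the diagram $\mathcal D$ whose objects are these opens and whose arrows are the inclusions between them. Since this universal property characterises the direct limit, it is enough to produce, for every complex variety $Y$ and every compatible cocone $\{f_\Gamma\colon \tv^\Gamma \to Y\}$ (that is, $f_{\Gamma'}|_{\tv^{\Gamma_0}} = f_{\Gamma_0}$ whenever $\tv^{\Gamma_0}\hookrightarrow\tv^{\Gamma'}$ is an arrow of $\mathcal D$), a unique morphism $f\colon X\to Y$ with $f\circ\iota_\Gamma = f_\Gamma$ for all $\Gamma$. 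Because morphisms of complex varieties form a sheaf (they may be glued from an open cover on which they agree over overlaps), the statement reduces to two facts about the family $\{\tv^\Gamma\}$: (i) it is an open cover of $X$, and (ii) the intersection of any two of its members is covered by members of the family contained in both. Fact (i) is immediate, since $X$ is affinely covered and each of the finitely many affine toric charts $\tv^{\Delta_\alpha}$ covering $X$ is itself a member of the family.

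The key toric input is (ii), which I would extract from two observations. First, every nonempty torus-invariant Zariski-open $U\subseteq X$ contains the dense torus $T$: indeed $U$ meets the dense set $T$, and $U\cap T$, being a nonempty torus-invariant subset of the single orbit $T$, must equal $T$. Second, in any affine toric variety $\tv^\Gamma$ the monomial principal opens $D(\chi^m)=\{x:\chi^m(x)\neq 0\}$, for $m\in\Gamma$, cover every torus-invariant open: the torus acts on $\cO_{\tv^\Gamma}(\tv^\Gamma)=\CC[\Gamma]$ with the monomials as eigenvectors, so the closed complement of such an open is cut out by monomials; moreover each $D(\chi^m)$ is itself a member of the family, being one of the torus-invariant affine opens $\tv^{\Gamma+\Phi^\gp}$ enumerated by \autoref{prop:chartoraffopen}.

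With these in hand, (ii) follows: given two members $U=\tv^{\Gamma_1}$ and $V=\tv^{\Gamma_2}$ and a point $x\in U\cap V$, choose an affine chart $\tv^{\Delta_\alpha}\ni x$; then $x$ lies in the torus-invariant open $U\cap V\cap\tv^{\Delta_\alpha}$ of $\tv^{\Delta_\alpha}$, so by the second observation there is a monomial principal open $D(\chi^m)\subseteq U\cap V\cap\tv^{\Delta_\alpha}$ with $x\in D(\chi^m)$, and $D(\chi^m)$ is a member of the family contained in both $U$ and $V$. Consequently, for a compatible cocone $\{f_\Gamma\}$ the restrictions of $f_{\Gamma_1}$ and $f_{\Gamma_2}$ agree on each such $D(\chi^m)$ (the inclusions $D(\chi^m)\hookrightarrow U$ and $D(\chi^m)\hookrightarrow V$ are arrows of $\mathcal D$, so both restrictions equal $f_{D(\chi^m)}$), and since these opens cover $U\cap V$ the two maps agree on the whole overlap. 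The $f_\Gamma$ therefore glue to a morphism $f\colon X\to Y$, uniquely determined because the $\tv^\Gamma$ cover $X$. This establishes the colimit universal property and hence the identification of $X$ with $\varinjlim_{\mathcal D}\tv^\Gamma$.

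I expect the main obstacle to be fact (ii): it is the only genuinely toric-geometric ingredient, resting both on the combinatorial description of torus-invariant affine opens in \autoref{prop:chartoraffopen} and on the monomial-eigenspace decomposition of $\CC[\Gamma]$ used to cover an arbitrary torus-invariant open by members of the family. One must also take some care, in the affinely-covered (and possibly non-normal or non-separated) generality, to carry out this reduction chart by chart rather than assuming the family is literally stable under intersection. Everything after (i) and (ii) is the formal assertion that a complex variety is recovered as the colimit of any open cover whose pairwise overlaps are again covered by members, that is, the locality of morphisms of varieties.
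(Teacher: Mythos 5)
The paper gives no proof of this proposition at all — it is stated as folklore — so there is no argument of the author's to compare yours against; judged on its own terms, your proof is correct. Your reduction of the colimit universal property to the two facts (i) and (ii) via the sheaf property of morphisms is the right formalization, and (i) is indeed immediate from the definition of an affinely covered toric variety. The genuinely toric step (ii) is also sound as you argue it: $\CC[\Gamma]=\bigoplus_{m\in\Gamma}\CC\,\chi^m$ is a multiplicity-free decomposition into $T$-eigenspaces (distinct exponents give distinct characters of $T$), so the radical ideal of the torus-invariant closed complement of an invariant open $W$ is spanned by the monomials it contains, whence $W$ is the union of the principal opens $D(\chi^m)$ lying inside it; and each $D(\chi^m)$, being an affine torus-invariant Zariski-open subset of the ambient affine chart, is of the form $\tv^{\Gamma+\Phi^{\gp}}$ by \autoref{prop:chartoraffopen} and hence is itself a member of the diagram, contained in both $U$ and $V$. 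Your closing caution about not assuming the family is stable under intersection is not pedantry: \autoref{def:toricvar} does not impose separatedness, and for $\CC^2$ with doubled origin (an affinely covered toric variety in that sense) the intersection of the two copies of $\CC^2$ is $\CC^2\setminus\{0\}$, which is not affine, so the chart-by-chart covering of overlaps by monomial principal opens is exactly what rescues the gluing. The only improvement worth making is cosmetic: since your argument uses nothing about $Y$ beyond the sheaf property of morphisms, the universal property holds with $Y$ any locally ringed space, which is the natural category in which to read the phrase ``direct limit'' here.
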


In the same way as a complex affine toric variety may be described using its associated monoid
(see \autoref{def:afftoric}), one may describe an arbitrary
toric variety $X$ using the finite set of monoids associated to all its toric affine Zariski open subsets.
It turns out that those are not arbitrary finite sets of submonoids of the lattice $M$ of exponents
of the dense algebraic torus $T$ of $X$ (see \autoref{def:lattoric}). In order to characterize such sets
in \autoref{prop:charfinitesetmon} below, we need to introduce more terminology concerning {\em cones}
relative to lattices:

\begin{definition}  \label{def:ratpolcone}
   Let $N$ be a lattice. The {\bf associated real vector space} $\boxed{N_{\R}}$ is
     $N \otimes_{\Z} \R \hookleftarrow N$.  A {\bf polyhedral cone} $\sigma$ in $N_{\R}$ is a
     submonoid of $N_{\R}$ of the form $\boxed{\R_{\geq 0}\langle w_1, \dots, w_r\rangle}$,
     consisting of the linear combinations of finitely many vectors $w_1, \dots, w_r \in N_{\R}$,
     with {\em non-negative} real coefficients.
     The polyhedral cone $\sigma$ is called {\bf rational} if those vectors may be chosen in $N$
     and it is called {\bf sharp} if it is a sharp monoid in the sense of \autoref{def:intmon}.
\end{definition}

In the following definition, the toric monoid $\Gamma$ is to be thought of as the exponent monoid
of an affine toric variety (see \autoref{def:afftoric}):

\begin{definition}  \label{def:twocones}
     Let $\Gamma$ be a toric monoid, $M:=  \Gamma^{\gp} \hookleftarrow \Gamma$
     be the lattice generated by it and $N := \Hom(M, \Z)$ be its dual lattice.
     One associates to $\Gamma$ the following rational polyhedral cones:
         \[  \begin{array}{ccc}
               \boxed{\sigma(\Gamma)}              &  :=
                   & \{ w \in N_{\R}, \  w \cdot m \geq 0, \forall \ m \in \Gamma \}  \subseteq N_{\R}; \\
               \boxed{\sigma^{\vee}(\Gamma)}  &  :=
                   &  \{ m \in M_{\R}, \  w \cdot m \geq 0, \forall \ w \in \sigma(\Gamma) \}
                        \subseteq M_{\R}.
             \end{array}     \]
     $\sigma(\Gamma)$ is called the {\bf weight cone} of $\Gamma$ and
     $\sigma^{\vee}(\Gamma) \hookleftarrow \Gamma$ is called the {\bf exponent cone} of $\Gamma$.
\end{definition}

By construction, $\sigma^{\vee}(\Gamma)$ is the {\bf dual cone of $\sigma(\Gamma)$}.
The duality operator is an involution on polyhedral cones: conversely, $\sigma(\Gamma)$
is the dual cone of $\sigma^{\vee}(\Gamma)$.

A basic property of the cones $\sigma(\Gamma)$ and $\sigma^{\vee}(\Gamma)$ is the following:

\begin{lemma}  \label{lemma:dualcones}
   If $\Gamma$ is a toric monoid, then both cones $\sigma(\Gamma)$ and $\sigma^{\vee}(\Gamma)$
   are rational and polyhedral in the sense of \autoref{def:ratpolcone}. Moreover:
        \begin{enumerate}
              \item $\sigma(\Gamma)$ is sharp.
              \item  \label{secondefsigmagamma}
                    $\sigma^{\vee}(\Gamma) = \R_{\geq 0} \Gamma$.
             \item $\sigma^{\vee}(\Gamma)$ has non-empty interior in $M_{\R}$.
             \item The edge of the cone $\sigma^{\vee}(\Gamma)$ is
                 the orthogonal $\R \sigma(\Gamma)^{\perp}$ of the
                 real vector subspace $\boxed{\R \sigma(\Gamma)}$ of $N_{\R}$
                 generated by $\sigma(\Gamma)$.
             \item The cone $\sigma(\Gamma)$ and the edge of $\sigma^{\vee}(\Gamma)$
                 have complementary dimensions:
                     \[  \dim \sigma(\Gamma) + \dim \R \sigma(\Gamma)^{\perp} = \dim M_{\R}.  \]
        \end{enumerate}
\end{lemma}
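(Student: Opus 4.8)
The plan is to reduce everything to the Minkowski--Weyl duality theory for rational polyhedral cones, whose essential input is biduality $\tau^{\vee\vee} = \tau$ for a closed polyhedral cone. First I would fix a finite generating set $m_1, \dots, m_s$ of the toric monoid $\Gamma$ and set $\tau := \R_{\geq 0}\Gamma = \R_{\geq 0}\langle m_1, \dots, m_s \rangle \subseteq M_\R$, a finitely generated --- hence closed --- rational polyhedral cone. Since the pairing is linear, the condition $w \cdot m \geq 0$ for all $m \in \Gamma$ is equivalent to the same condition for all $m \in \tau$, so $\sigma(\Gamma) = \tau^\vee$. By the Minkowski--Weyl theorem the dual of a rational polyhedral cone is again rational and polyhedral; thus $\sigma(\Gamma)$ is rational and polyhedral, and applying duality once more together with biduality gives $\sigma^\vee(\Gamma) = \tau^{\vee\vee} = \tau = \R_{\geq 0}\Gamma$. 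This single computation simultaneously yields the rationality and polyhedrality of both cones and establishes claim \eqref{secondefsigmagamma}.

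For the sharpness of $\sigma(\Gamma)$ I would use that $\Gamma^{\gp} = M$ is a full-rank lattice, so the $\R$-span of $\Gamma$ is all of $M_\R$. If $w$ and $-w$ both lie in $\sigma(\Gamma)$, then $w \cdot m = 0$ for every $m \in \Gamma$, hence for every $m \in M_\R$ by spanning, and non-degeneracy of the pairing forces $w = 0$; thus $\sigma(\Gamma) \cap (-\sigma(\Gamma)) = \{0\}$. For the non-empty interior claim, the same spanning property lets me select from the generators a subset forming an $\R$-basis of $M_\R$; the simplicial cone they generate is full-dimensional, has non-empty interior, and is contained in $\R_{\geq 0}\Gamma = \sigma^\vee(\Gamma)$, so $\sigma^\vee(\Gamma)$ has non-empty interior in $M_\R$.

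The last two claims are then formal. For the edge, I would compute directly $\sigma^\vee(\Gamma) \cap (-\sigma^\vee(\Gamma)) = \{ m \in M_\R : w \cdot m = 0 \ \forall w \in \sigma(\Gamma)\}$, which is exactly the orthogonal of the span $\R\sigma(\Gamma)$, that is $\R\sigma(\Gamma)^{\perp}$. For the dimension equality, I would invoke that the pairing $N_\R \times M_\R \to \R$ is perfect, coming from the unimodular pairing \eqref{eq:canpairing} of the dual lattices, so for $V := \R\sigma(\Gamma) \subseteq N_\R$ one has $\dim V + \dim V^{\perp} = \dim N_\R = \dim M_\R$; since $\dim \sigma(\Gamma) = \dim V$ and, by the previous claim, $\dim \R\sigma(\Gamma)^{\perp} = \dim V^{\perp}$, the stated formula follows.

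The main obstacle is the single convex-geometric fact underlying claim \eqref{secondefsigmagamma}, namely the biduality $\tau^{\vee\vee} = \tau$ for the finitely generated cone $\tau = \R_{\geq 0}\Gamma$; this rests on the separation theorem (Farkas's lemma) and is the only genuinely non-formal ingredient. Everything else --- the identification $\sigma(\Gamma) = \tau^\vee$, sharpness, non-empty interior, the edge computation, and the dimension count --- is a direct consequence of linearity, the spanning property $\langle\Gamma\rangle_\R = M_\R$ coming from $\Gamma^{\gp} = M$, and non-degeneracy of the pairing, once biduality and the Minkowski--Weyl equivalence between the two descriptions of polyhedral cones are in hand.
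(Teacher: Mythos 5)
Your proof is correct. The paper itself states this lemma without proof, treating it as a standard fact of convex and toric geometry, so there is no argument of the author's to compare against; your write-up supplies the standard one. The two genuinely non-formal inputs are exactly the ones you isolate: the Minkowski--Weyl theorem (so that the dual of a cone that is finitely generated in the sense of \autoref{def:ratpolcone} is again of that form, with rationality preserved) and Farkas biduality $\tau^{\vee\vee}=\tau$ for the closed cone $\tau=\R_{\geq 0}\Gamma$, which together give both the polyhedrality statements and claim \eqref{secondefsigmagamma}. Everything else is correctly reduced to the spanning property $\langle\Gamma\rangle_{\R}=M_{\R}$ --- which is legitimate here because $M:=\Gamma^{\gp}$ is built into \autoref{def:twocones} --- and to perfectness of the pairing between $N_{\R}$ and $M_{\R}$: sharpness and the dimension count use non-degeneracy, the interior claim uses that a spanning set of generators contains a basis, and the edge computation is the direct calculation $\sigma^{\vee}(\Gamma)\cap(-\sigma^{\vee}(\Gamma))=\{m\in M_{\R}: w\cdot m=0 \ \forall w\in\sigma(\Gamma)\}=\R\sigma(\Gamma)^{\perp}$. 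One small point worth keeping explicit, as you do, is that finitely generated cones are closed, since biduality only returns the closure of the cone.
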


\begin{remark}
      Lemma \ref{lemma:dualcones} (\ref{secondefsigmagamma}) shows that the cone
      $\sigma^{\vee}(\Gamma)$ may be directly defined from $\Gamma$, as well as its dual
      $\sigma(\Gamma)$. Why have we privileged $\sigma(\Gamma)$ by using
      the duality notation $\sigma^{\vee}(\Gamma)$
      for the cone $\R_{\geq 0} \Gamma$ and not the other way round? The reason
      is that {\em normal} toric varieties are defined using nice families of sharp rational
      polyhedral cones contained in
      the weight space $N_{\R}$, called {\em fans} (see \autoref{def:fan}),
      and that one associates to such a cone
      $\sigma$ a monoid by considering first its dual $\sigma^{\vee}$, then the intersection
      $M \cap \sigma^{\vee}$ (see \autoref{thm:fanormalvar}).
  \end{remark}

\begin{example}  \label{ex:dualconesplane}
   Let us consider the monoids $ \Z^2, \Z \times \N,  \N \times \Z, \N^2$ corresponding
   to the toric Zariski-open subsets of $\CC^2$ (see  \autoref{ex:toropenC2}).
   The associated weight cones, which are subcones of $N_{\R}= \R^2$, are:
        \begin{itemize}
            \item  $\sigma( \Z^2) = \{0\}$,
            \item  $ \sigma(\Z \times \N) = \{0\} \times \R_{\geq 0}$,
            \item  $ \sigma(\N \times \Z) =  \R_{\geq 0} \times \{0\}$,
            \item  $ \sigma(\N^2) =  \R_{\geq 0} \times  \R_{\geq 0}$,
        \end{itemize}
   while their dual exponent cones, which are subcones of $M_{\R} = \R^2$, are:
      \begin{itemize}
            \item  $\sigma^{\vee}( \Z^2) = \R^2$,
            \item  $ \sigma^{\vee}(\Z \times \N) = \R \times \R_{\geq 0}$,
            \item  $ \sigma^{\vee}(\N \times \Z) =  \R_{\geq 0} \times \R$,
            \item  $ \sigma^{\vee}(\N^2) =  \R_{\geq 0} \times  \R_{\geq 0}$.
        \end{itemize}
    We see that, unlike the set $\{ \sigma^{\vee}( \Z^2), \sigma^{\vee}(\Z \times \N),
        \sigma^{\vee}(\N \times \Z),  \sigma^{\vee}(\N^2) \}$ of exponent cones,
    the finite set $\{ \sigma( \Z^2), \sigma(\Z \times \N), $
       $\sigma(\N \times \Z),  \sigma(\N^2) \}$ of weight cones consists exclusively of sharp
       cones and is closed under taking faces and intersections.
     This property holds for every affine toric variety and it motivates \autoref{def:fan} below.
\end{example}

 \begin{definition}  \label{def:fan}
     A finite set $\cF$ of sharp rational polyhedral
     cones of $N_{\R}$ is called a {\bf fan} if it satisfies simultaneously:
        \begin{itemize}
             \item it is closed under taking faces;
             \item it is closed under taking intersections.
        \end{itemize}
\end{definition}

We are now ready to formulate the promised characterization of the finite sets of monoids corresponding
to toric varieties:

\begin{proposition}  \label{prop:charfinitesetmon}
      Let $T$ be a complex algebraic torus with exponent lattice $M$ and weight lattice $N$.
      Then, the finite sets $I$ of toric submonoids of $M$ corresponding to affinely covered toric
      varieties with dense torus $T$ may be characterized in the following way:
        \begin{enumerate}
             \item  For every $\Gamma \in I$, one has $\Gamma^{\gp} = M$.
             \item The map which associates to each $\Gamma \in I$ its weight cone
                  $\sigma(\Gamma)$ is injective.
             \item  The set
                 \[   \{ \sigma(\Gamma), \Gamma \in I \} \]
                 forms a  fan in $N_{\R}$, in the sense of \autoref{def:fan}.
             \item \label{pointfaces}
                 For every $\Gamma \in I$, the elements of $I$ containing it
                 are the monoids of the form
                  \[ \Gamma + (\Gamma \cap \tau^{\perp})^{\gp} \]
                where $\tau$ varies among the faces of the cone $\sigma(\Gamma)$ and
                $\tau^{\perp} \subseteq N_{\R}$ denotes the orthogonal of $\tau$.
                In this case, we have $\sigma( \Gamma + (\Gamma \cap \tau^{\perp})^{\gp}) = \tau$.
        \end{enumerate}
\end{proposition}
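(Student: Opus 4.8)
The plan is to prove the characterization in both directions: first that the set $I$ of exponent monoids attached to an affinely covered toric variety $X$ with dense torus $T$ satisfies (1)--(4), and then that every finite set $I$ of toric submonoids of $M$ satisfying (1)--(4) is realized by gluing the affine pieces $\tv^{\Gamma}$. The computational backbone throughout is the elementary identity
\[
   \sigma(\Gamma + \Phi^{\gp}) = \sigma(\Gamma) \cap \Phi^{\perp},
\]
valid for every face $\Phi$ of a toric monoid $\Gamma$ (since pairing nonnegatively with the group $\Phi^{\gp}$ means pairing to zero with it), together with the cone-duality dictionary relating faces $\Phi$ of $\Gamma$, faces of $\sigma^{\vee}(\Gamma) = \R_{\geq 0}\Gamma$ (using the second part of \autoref{lemma:dualcones}), and faces $\tau$ of the sharp cone $\sigma(\Gamma)$. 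Under this dictionary $\tau$ corresponds to $\Phi = \Gamma \cap \tau^{\perp}$, and a short check shows $\Phi$ is indeed a face and $\sigma(\Gamma) \cap \Phi^{\perp} = \tau$; combined with \autoref{prop:chartoraffopen} this already gives the final claim of the fourth condition and reduces condition (4) to a mere reindexing of \autoref{prop:chartoraffopen}.

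For the forward direction I would let $I$ be the set of exponent monoids of all toric affine Zariski-open subsets of $X$, which cover $X$ and present it as a direct limit by \autoref{prop:identifdirlimit}. Condition (1) is immediate, since each $\tv^{\Gamma}$ contains the common dense torus $T$ with exponent lattice $M$, forcing $\Gamma^{\gp} = M$. For (3), each $\sigma(\Gamma)$ is sharp by \autoref{lemma:dualcones}; closure under faces holds because the open subsets $\tv^{\Gamma + \Phi^{\gp}} \subseteq \tv^{\Gamma} \subseteq X$ are again toric affine opens of $X$, so their weight cones (faces of $\sigma(\Gamma)$, by the backbone identity) again lie in the set; and closure under intersections follows from the fact that $\tv^{\Gamma_1} \cap \tv^{\Gamma_2}$ is itself a toric affine open whose exponent monoid is the submonoid $\Gamma_1 + \Gamma_2$ generated by the two, so that its weight cone is $\sigma(\Gamma_1) \cap \sigma(\Gamma_2)$. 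Condition (4) is the dictionary above applied chart by chart: the elements of $I$ containing $\Gamma$ are exactly the exponent monoids of the toric affine opens of $\tv^{\Gamma}$, namely the $\Gamma + \Phi^{\gp}$ of \autoref{prop:chartoraffopen}, reindexed by the faces $\tau$ of $\sigma(\Gamma)$.

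The delicate point is condition (2), the injectivity of $\Gamma \mapsto \sigma(\Gamma)$, which I expect to be the main obstacle. If $\sigma(\Gamma_1) = \sigma(\Gamma_2) =: \sigma$, then the intersection chart has exponent monoid $\Gamma_1 + \Gamma_2$ and hence weight cone $\sigma \cap \sigma = \sigma$; but viewed as an open of $\tv^{\Gamma_1}$ it equals $\Gamma_1 + \Phi_1^{\gp}$ (by \autoref{prop:chartoraffopen} and uniqueness of the exponent monoid from \autoref{prop:allafftoric}), whose weight cone $\sigma \cap \Phi_1^{\perp}$ must then equal $\sigma$. This means $\Phi_1 \subseteq (\R \sigma)^{\perp} \cap \Gamma_1 = \Gamma_1^{\star}$ by the fourth part of \autoref{lemma:dualcones}, so $\Phi_1 = \Gamma_1^{\star}$ and $\Gamma_1 + \Gamma_2 = \Gamma_1$; symmetrically it equals $\Gamma_2$, whence $\Gamma_1 = \Gamma_2$. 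The crux hidden here is the step ``$\tv^{\Gamma_1} \cap \tv^{\Gamma_2}$ has exponent monoid $\Gamma_1 + \Gamma_2$'', which is exactly where separatedness of $X$ enters; indeed condition (2) is precisely what excludes non-separated phenomena such as the line with a doubled origin, whose two charts $\tv^{\N}$ share the weight cone $\R_{\geq 0}$.

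For the converse I would glue: given $I$ satisfying (1)--(4), form the varieties $\tv^{\Gamma}$, and for each pair $\Gamma_1,\Gamma_2 \in I$ use condition (3) to see that $\tau := \sigma(\Gamma_1) \cap \sigma(\Gamma_2)$ is a common face, condition (2) to select the unique $\Gamma_{12} \in I$ with $\sigma(\Gamma_{12}) = \tau$, and condition (4) with \autoref{prop:chartoraffopen} to realize $\tv^{\Gamma_{12}}$ as an open subset of both $\tv^{\Gamma_1}$ and $\tv^{\Gamma_2}$; these open immersions are the gluing data. The cocycle condition on triple overlaps reduces to the equality of the corresponding intersections of cones, guaranteed by the fan axioms of condition (3). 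Since the trivial cone $\{0\}$ is a face of every cone it lies in the fan, and a short argument (if $\R_{\geq 0}\Gamma = M_{\R}$ and $\Gamma^{\gp} = M$ then each generator and its negative lie in $\Gamma$, so $\Gamma = M$) shows that the associated monoid is $M$ itself; hence the glued variety $X$ has dense torus $\tv^{M} = T$, and by the same dictionary its set of exponent monoids of toric affine opens is exactly $I$.
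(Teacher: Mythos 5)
First, a remark on the comparison itself: the paper does not actually prove this proposition. It is stated as a known characterization, and the only justification offered is the remark following it, which notes that point (4) results from \autoref{prop:chartoraffopen} via the inclusion-reversing bijection $\tau \mapsto \Gamma \cap \tau^{\perp}$ between faces of $\sigma(\Gamma)$ and faces of $\Gamma$. So your write-up attempts strictly more than the text does, and your forward direction is essentially sound: the identity $\sigma(\Gamma + \Phi^{\gp}) = \sigma(\Gamma) \cap \Phi^{\perp}$, the face dictionary, and the role of separatedness (two toric affine charts of $X$ meet in a toric affine chart whose exponent monoid is $\Gamma_1 + \Gamma_2$) are exactly the right ingredients. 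Two small repairs are needed there: the step $\Gamma_1 \cap (\R \sigma)^{\perp} = \Gamma_1^{\star}$ does not follow from \autoref{lemma:dualcones} alone --- you must also check that if $m \in \Gamma$ and $-m \in \R_{\geq 0}\Gamma$ then $-m \in \Gamma$ (clear denominators to get $-km \in \Gamma$, then write $-m = -km + (k-1)m$); and your aside about the doubled line is off target, since that space has set of exponent monoids $\{\N, \Z\}$, identical to that of $\CC$, so condition (2) does not ``exclude'' it (what non-separatedness genuinely breaks is the forward direction: gluing $\tv^{\N}$ and $\tv^{\N \langle 2, 3 \rangle}$ along $\CC^{*}$ violates condition (2)).

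The genuine gap is in the converse, at ``use condition (3) to see that $\tau := \sigma(\Gamma_1) \cap \sigma(\Gamma_2)$ is a common face.'' With the paper's \autoref{def:fan} this implication is false: a fan there is only required to be closed under taking faces and under taking intersections, and that does not force the intersection of two of its cones to be a face of each. Concretely, in $N_{\R} = \R^2$ let $\sigma_1 = \R_{\geq 0}\langle (1,0),(0,1)\rangle$ and $\sigma_2 = \R_{\geq 0}\langle (1,1),(-1,1)\rangle$; then $\sigma_1 \cap \sigma_2 = \R_{\geq 0}\langle (1,1),(0,1)\rangle$ is a face of neither, yet these three cones together with all their faces form a finite set closed under faces and intersections, hence a fan in the sense of \autoref{def:fan}. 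Your gluing step genuinely needs the common-face property: by \autoref{prop:chartoraffopen} and the dictionary, $\tv^{\Gamma_{12}}$ can be a toric open subset of $\tv^{\Gamma_1}$ only if $\Gamma_{12} = \Gamma_1 + (\Gamma_1 \cap \tau^{\perp})^{\gp}$ for a face $\tau$ of $\sigma(\Gamma_1)$, in which case $\sigma(\Gamma_{12})$ is a face of $\sigma(\Gamma_1)$; when it is not such a face there are simply no gluing data, and conditions (1), (2), (4) do not rescue the argument, because condition (4) constrains only those elements of $I$ that already contain $\Gamma_1$, and nothing forces $\Gamma_{12} \supseteq \Gamma_1$. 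Moreover the gap cannot be filled as stated: one can check that the configuration above admits decorations by toric monoids satisfying all of (1)--(4) (the trick is to use non-saturated monoids, e.g.\ ones omitting all lattice points whose first, respectively second, coordinate equals $1$, so as to destroy every containment not prescribed by condition (4)), and such an $I$ is realized by no separated variety, since in a separated $X$ the charts $\tv^{\Gamma_1}$ and $\tv^{\Gamma_{12}}$ would meet in a toric affine open whose exponent monoid is $\Gamma_1 + \Gamma_{12}$, and no element of $I$ equals this sum. So your converse goes through only if \autoref{def:fan} is read as the standard fan axiom (intersections are common faces) --- which your argument silently assumes, which is surely what the proposition intends, but which is strictly stronger than what the paper's definition says.
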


\begin{remark}
    If $\tau$ is a face of $\sigma(\Gamma)$, then the orthogonal $\tau^{\perp}$ is
    the edge of the rational polyhedral cone
    $\tau^{\vee}$ and $\Gamma \cap \tau^{\perp}$ is a face of $\Gamma$.
    The map $\tau \mapsto \Gamma \cap \tau^{\perp}$ is an inclusion-reversing bijection from the set
    of faces of $\sigma(\Gamma)$ to the set of faces of $\Gamma$. Point \eqref{pointfaces} of
    \autoref{prop:charfinitesetmon} results from \autoref{prop:chartoraffopen}.
\end{remark}

Let us turn the properties of the monoids of $I$ and the fan $ \{ \sigma(\Gamma), \Gamma \in I \}$
appearing in \autoref{prop:charfinitesetmon}
into the following definition, equivalent to part of Gonz\'alez P\'erez and Teissier's
\cite[Definition 21]{GPT 14}:

\begin{definition}  \label{def:fanmonoids}
     Let $M$ and $N$ be dual lattices, $N$ being interpreted as the weight lattice of an algebraic
     torus. A {\bf fan of monoids} is the datum $\boxed{\hat{\cF}}$ of a fan $\cF$ of cones of
     $N_{\R}$ in the sense of \autoref{def:fan}, called the {\bf underlying fan} of $\hat{\cF}$,
     and of a toric submonoid $\boxed{\Gamma_{\sigma}}$ of $M$ for every $\sigma \in \cF$,
     such that the following conditions are simultaneously satisfied:
        \begin{itemize}
             \item $(\Gamma_{\sigma})^{\gp} = M$ for every $\sigma \in \cF$.
             \item $ \Gamma_{\tau} = \Gamma_{\sigma} + (\Gamma_{\sigma} \cap \tau^{\perp})^{\gp}$,
             for every  $\tau, \sigma \in \cF$ such that $\tau$ is a face of $\sigma$.
        \end{itemize}
\end{definition}

The author and Stepanov had already introduced this notion in \cite[Definition 3.4]{PPS 13}
under the name ``{\em fan of semigroups}''. The terminology ``{\em fan of monoids}'' seems to be new.

\autoref{prop:charfinitesetmon} states that every affinely covered toric variety
is encoded by a fan of monoids.
By \autoref{prop:identifdirlimit}, we see that the toric variety associated to such a fan of monoids
may be obtained as follows:

\begin{definition}  \label{def:toricfromfanmon}
    Let $\hat{\cF}$ be a fan of monoids, with underlying fan $\cF$.
    Its {\bf associated toric variety} is defined as the direct limit:
     \[  \boxed{\tv_{\hat{\cF}} }\  := \lim_{\substack{\longrightarrow \\ \sigma \in \cF}}
             \tv^{\Gamma_{\sigma}} .
     \]
    Here, the connecting morphisms between the affine toric varieties $\tv^{\Gamma_{\sigma}}$
    are the morphisms $\tv^{\Gamma_{\tau}} \to \tv^{\Gamma_{\sigma}}$
     induced by the inclusion morphisms $\Gamma_{\sigma} \hookrightarrow \Gamma_{\tau}$,
    whenever $\tau \subseteq \sigma$.
\end{definition}

Note that, as a consequence of the fact that $\cF$ is a fan, one has an inclusion
$\tau \subseteq \sigma$ as above if and only if $\tau$ is a face of $\sigma$.

Toric morphisms between toric varieties in the sense of \autoref{def:toricvar}
may be characterized as the morphisms between the associated
weight lattices which are {\em compatible with the two corresponding fans of monoids},
in the following sense:

\begin{proposition}  \label{prop:morphtoricgen}
     Let $\hat{\cF_1}$ and $\hat{\cF_2}$ be two fans of monoids in two lattices $N_1$ and
     $N_2$. Then, the toric morphisms $\tv_{\hat{\cF_1}} \to \tv_{\hat{\cF_2}}$ are
     induced by the morphisms $\nu : N_1 \to N_2$ of lattices such that:
        \begin{itemize}
           \item  For every $\sigma_1 \in \cF_1$, there exists some cone $\sigma_2 \in \cF_2$ such that
              $\nu(\sigma_1) \subseteq \sigma_2$.
           \item If $\sigma_1 \in \cF_1$ and $\sigma_2 \in \cF_2$ are as above
              and if their associated monoids are $\Gamma_1 \subseteq M_1$ and
              $\Gamma_2 \subseteq M_2$,
              then the dual morphism $\nu^{\vee} : M_2 \to M_1$ of $\nu$ satisfies the inclusion relation:
                  \[  \nu^{\vee}(\Gamma_2) \subseteq \Gamma_1.  \]
        \end{itemize}
\end{proposition}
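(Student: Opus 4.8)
The plan is to reduce the statement to the affine case (\autoref{prop:toricmorph}) and the description of a toric variety as the direct limit of its affine toric charts (\autoref{prop:identifdirlimit} and \autoref{def:toricfromfanmon}), exploiting throughout the fact that a toric morphism is determined by its restriction to the dense torus. First I would start with a toric morphism $f \colon \tv_{\hat{\cF_1}} \to \tv_{\hat{\cF_2}}$ and restrict it to the dense tori. By \autoref{def:toricvar} this restriction is a morphism of algebraic groups $T_1 \to T_2$. Composing it with one-parameter subgroups and with monomials, and using \autoref{def:lattoric} together with the duality expressed by the pairing \eqref{eq:canpairing}, I obtain a lattice morphism $\nu \colon N_1 \to N_2$ whose dual is $\nu^{\vee} \colon M_2 \to M_1$; concretely $\nu(w) = f \circ \lambda^w$ and $\nu^{\vee}(m) = \chi^m \circ f$. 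Since restriction of regular functions to the dense torus is injective (cf. formula \eqref{eq:embafftoric}), the morphism $f$ is uniquely determined by $\nu$, so it remains to characterize which $\nu$ occur.

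Next I would extract the two combinatorial conditions. For the cone-containment condition, fix $\sigma_1 \in \cF_1$ and a weight $w$ in the relative interior of $\sigma_1 \cap N_1$. The one-parameter subgroup $\lambda^w$ has a limit $\lim_{\lambda \to 0} \lambda^w$ inside the chart $\tv^{\Gamma_{\sigma_1}} = \Hom(\Gamma_{\sigma_1}, \CC)$, because every monomial $\chi^m$ with $m \in \Gamma_{\sigma_1} \subseteq \sigma_1^{\vee}$ satisfies $w \cdot m \geq 0$, making the corresponding limit well defined. As $f$ is everywhere defined, the image of this limit exists in $\tv_{\hat{\cF_2}}$ and equals $\lim_{\lambda \to 0} \lambda^{\nu(w)}$; such a limit exists in a chart $\tv^{\Gamma_{\sigma_2}}$ precisely when $\nu(w) \in \sigma_2$. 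Letting $w$ range over $\sigma_1$ and arguing that the image of the relative interior lands in a single orbit, hence in a single cone, I conclude $\nu(\sigma_1) \subseteq \sigma_2$ for one $\sigma_2 \in \cF_2$. Once the containment holds, the restricted morphism $f \colon \tv^{\Gamma_{\sigma_1}} \to \tv^{\Gamma_{\sigma_2}}$ is affine and toric, hence by \autoref{prop:toricmorph} is induced by a monoid morphism $\Gamma_{\sigma_2} \to \Gamma_{\sigma_1}$; comparing on the dense torus identifies this morphism with the restriction of $\nu^{\vee}$, yielding $\nu^{\vee}(\Gamma_{\sigma_2}) \subseteq \Gamma_{\sigma_1}$, which is the second condition.

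For the converse, given $\nu$ satisfying both conditions, for each $\sigma_1 \in \cF_1$ I would choose $\sigma_2 \in \cF_2$ with $\nu(\sigma_1) \subseteq \sigma_2$; the inclusion $\nu^{\vee}(\Gamma_{\sigma_2}) \subseteq \Gamma_{\sigma_1}$ defines a monoid morphism and hence, by \autoref{prop:toricmorph}, an affine toric morphism $\tv^{\Gamma_{\sigma_1}} \to \tv^{\Gamma_{\sigma_2}}$. All these local morphisms restrict to the single torus morphism determined by $\nu$, so they agree on overlaps; compatibility with the face relations of \autoref{def:fanmonoids} allows them to be glued, and the universal property of the direct limit in \autoref{def:toricfromfanmon} assembles them into the desired toric morphism $\tv_{\hat{\cF_1}} \to \tv_{\hat{\cF_2}}$.

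I expect the main obstacle to be the cone-containment step: converting the geometric hypothesis that $f$ is a globally defined morphism of varieties into the combinatorial statement $\nu(\sigma_1) \subseteq \sigma_2$ for a \emph{single} cone $\sigma_2$. This requires the description of points of affine toric varieties as limits of one-parameter subgroups, a check that such a limit belongs to exactly one chart, and an argument that the whole cone $\sigma_1$ (not just individual lattice points) maps into one cone of $\cF_2$, resting on the connectedness of $\sigma_1$ and on the orbit structure of the target. By contrast, the gluing in the converse is routine once uniqueness of extensions from the dense torus is available.
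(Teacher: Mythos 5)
The paper states \autoref{prop:morphtoricgen} without proof, treating it as folklore encoded in the fan-of-monoids formalism borrowed from \cite{GPT 14} and \cite{PPS 13}; so there is no internal argument to compare yours with, and it must be judged on its own. Judged so, your plan follows the standard route (the non-normal analogue of the compatibility theorem for normal toric varieties in \cite{CLS 11}), and its architecture is sound: recover $\nu$ from the group morphism $T_1 \to T_2$, obtain the cone condition from limits of one-parameter subgroups, reduce the monoid condition to \autoref{prop:toricmorph} on charts, and glue in the converse direction using uniqueness of extensions from the dense torus.

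Two steps, however, need repair or explicit justification. First, in the cone-containment step, connectedness of $\sigma_1$ is not the operative mechanism and cannot be: the support $|\cF_2|$ is itself connected (all cones share the origin), so knowing that the connected set $\nu(\operatorname{relint}\sigma_1)$ lies in $|\cF_2|$ proves nothing about a single cone. What does the work is the distinguished-point argument: every $w \in \operatorname{relint}(\sigma_1)\cap N_1$ has the \emph{same} limit $x_{\sigma_1} \in \tv^{\Gamma_{\sigma_1}}$, namely the monoid morphism sending $m \in \Gamma_{\sigma_1}$ to $1$ if $m \in \sigma_1^{\perp}$ and to $0$ otherwise; hence the points $\lim_{\lambda \to 0}\lambda^{\nu(w)} = f(x_{\sigma_1})$ all coincide. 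Since the limit of $\lambda^{v}$, when it exists, is the distinguished point of the unique cone of $\cF_2$ whose relative interior contains $v$, and distinguished points of distinct cones lie in distinct orbits, all the $\nu(w)$ lie in the relative interior of one and the same $\sigma_2$; density of rational points and closedness of $\sigma_2$ then give $\nu(\sigma_1)\subseteq\sigma_2$. Second, your sentence that ``once the containment holds, the restricted morphism $f \colon \tv^{\Gamma_{\sigma_1}} \to \tv^{\Gamma_{\sigma_2}}$ is affine and toric'' presupposes that $f$ maps the chart $\tv^{\Gamma_{\sigma_1}}$ into the chart $\tv^{\Gamma_{\sigma_2}}$, which does not follow formally from $\nu(\sigma_1)\subseteq\sigma_2$. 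One needs the orbit decomposition of the chart: $\tv^{\Gamma_{\sigma_1}}$ is the union of the orbits $T_1\cdot x_{\tau}$ over the faces $\tau$ of $\sigma_1$, the point $f(x_\tau)$ is the distinguished point of a face of $\sigma_2$ by the previous argument, and the target chart is $T_2$-invariant; equivariance then yields $f(\tv^{\Gamma_{\sigma_1}})\subseteq\tv^{\Gamma_{\sigma_2}}$. Both repairs rest on the orbit--cone correspondence for charts $\Hom(\Gamma,\CC)$ with $\Gamma$ an arbitrary, possibly non-saturated, toric monoid; this does hold (points with a fixed support face $\Phi$ of $\Gamma$ form a single orbit, because characters of $\Phi^{\gp}$ extend to $M_1$, the group $\CC^{*}$ being divisible), but it is exactly where the non-normality allowed by this paper forces a verification that the textbook references make only for normal varieties. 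Finally, ``they agree on overlaps'' in your converse silently uses that morphisms into $\tv_{\hat{\cF_2}}$ agreeing on the dense torus agree everywhere, which needs separatedness of the target and reducedness of the source, or a direct computation on the overlap charts; that deserves a sentence in a complete write-up.
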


\begin{remark}  \label{rem:positionfan}
   This is a sequel to \autoref{rem:positionmon}. In the notation ``$\tv_{\hat{\cF}}$'',
   the fan of monoids is written as a subscript, as the map
     of objects  $\hat{\cF} \mapsto \tv_{\hat{\cF}}$ is induced by a covariant functor
     from the category of fans of monoids to that of complex affinely covered toric varieties.
\end{remark}

\medskip
\subsection{Normal toric varieties}   \label{ssec:normaltoric}
$\  $
\medskip

In this subsection we introduce the notion of {\em saturated monoid} (see \autoref{def:satmonoid})
and we explain that {\em normal} complex toric varieties are exactly those whose
associated fan of monoids contains only
saturated monoids (see \autoref{def:satmonoid}).
\medskip

In most of the textbooks or introductory papers about toric geometry mentioned at the beginning
of \autoref{sec:introtoric},  only {\em normal} affine toric varieties are considered
(exceptions are \cite[Chapter 5]{GKZ 94}, \cite{C 03} and \cite[Appendix of Chapter 3]{CLS 11}). For this
reason (even if we do not restrict to them in the sequel, because
real oriented blowups may be performed
on arbitrary toric varieties), we will recall now how to characterize them in terms of
the corresponding fans of monoids. Since normality is a local property of an algebraic variety,
it suffices to describe the normal affine toric varieties in terms of their monoids.
It turns out that  their corresponding monoids are exactly the {\em saturated} ones, in the following sense:

\begin{definition} \label{def:satmonoid}
     Let $(\Gamma, +)$ be a toric monoid in the sense of \autoref{def:toricmonoid}.
     Identify it with its image inside the
     group $M := \Gamma^{\gp}$ generated by it. Its {\bf saturation} $\boxed{\overline{\Gamma}}$ is the set
     of elements $m$ of $M$ such that there exists $a \in \N^*$ with $a \cdot m \in \Gamma$.
     The toric monoid $(\Gamma, +)$ is called {\bf saturated} if it is equal to its saturation.
\end{definition}

Recall that any morphism of toric monoids induces a morphism of affine toric varieties
(see \autoref{prop:toricmorph}).
Here is the announced characterization of the normal affine toric varieties
(see \cite[Lemma 1, page 5]{KKMS 73}):

\begin{proposition}  \label{prop:charnormaltoric}
    Let $(\Gamma, +)$ be a toric monoid.  Then the toric morphism
    $\chi^{\nu} : \tv^{\overline{\Gamma}} \to \tv^{\Gamma}$ associated to the inclusion
    morphism $\nu : \Gamma \hookrightarrow \overline{\Gamma}$ of $\Gamma$ in its saturation
    $\overline{\Gamma}$ is a normalization morphism
    of $\tv^\Gamma$. Therefore, the toric variety $\tv^\Gamma$ is normal if and only if
    $\Gamma$ is saturated.
\end{proposition}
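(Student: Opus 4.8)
The plan is to show that $\chi^{\nu} : \tv^{\overline{\Gamma}} \to \tv^{\Gamma}$ is finite, birational, and has normal source, since these three properties characterize a normalization morphism of the integral affine variety $\tv^{\Gamma}$ (integral because $\CC[\Gamma] \subseteq \CC[M]$ is a domain). First I would record that $\overline{\Gamma}$ is again a toric monoid. Using \autoref{lemma:dualcones}~\eqref{secondefsigmagamma}, namely $\sigma^{\vee}(\Gamma) = \R_{\geq 0} \Gamma$, I claim $\overline{\Gamma} = M \cap \sigma^{\vee}(\Gamma)$: the inclusion $\subseteq$ is immediate from the definition of saturation, while for $\supseteq$ one writes a lattice point $m \in M \cap \R_{\geq 0}\Gamma$ as a non-negative real combination of a finite generating set of $\Gamma$ and observes that, the defining system being rational, a non-negative \emph{rational} solution already exists; clearing denominators produces $a \in \N^*$ with $am \in \Gamma$. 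Gordan's lemma then makes $\overline{\Gamma} = M \cap \sigma^{\vee}(\Gamma)$ finitely generated, hence toric, with $\overline{\Gamma}^{\gp} = M$; and it is saturated, since $a m \in \overline{\Gamma}$ gives $b a m \in \Gamma$ for some $b \in \N^*$.

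Next I would dispatch the two easier properties of the ring map $\CC[\Gamma] \hookrightarrow \CC[\overline{\Gamma}]$. Birationality holds because $\Gamma^{\gp} = \overline{\Gamma}^{\gp} = M$ forces both algebras to have fraction field $\CC(M)$: every $\chi^m$ with $m \in M$ is a quotient of two monomials coming from $\Gamma$. Finiteness holds because each $m \in \overline{\Gamma}$ admits some $am \in \Gamma$, so that $\chi^m$ satisfies the monic equation $X^a - \chi^{am} = 0$ over $\CC[\Gamma]$; thus $\CC[\overline{\Gamma}]$ is generated as a $\CC[\Gamma]$-algebra by finitely many integral elements and is therefore a finite $\CC[\Gamma]$-module, so $\chi^{\nu}$ is finite.

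The heart of the argument, and the step I expect to be the main obstacle, is the normality of $\CC[\overline{\Gamma}]$. I would prove it by writing $\sigma(\Gamma) = \R_{\geq 0}\langle v_1, \dots, v_r\rangle$ with the $v_i \in N$ primitive, so that
\[ \sigma^{\vee}(\Gamma) = \bigcap_{i=1}^{r} \{ m \in M_{\R} : v_i \cdot m \geq 0 \}, \qquad \overline{\Gamma} = \bigcap_{i=1}^{r} \bigl( M \cap \{ v_i \cdot m \geq 0 \} \bigr). \]
Since the monomials $\chi^m$ form a $\CC$-basis of $\CC[M]$, intersecting monoids corresponds to intersecting subalgebras, giving $\CC[\overline{\Gamma}] = \bigcap_i \CC[M \cap H_i^+]$ inside $\CC[M]$, where $H_i^+ = \{ m : v_i \cdot m \geq 0 \}$. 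Each $v_i$ being primitive, the functional $m \mapsto v_i \cdot m$ is surjective onto $\Z$, so a suitable basis of $M$ identifies $M \cap H_i^+$ with $\Z^{\,n-1} \times \N$ and hence $\CC[M \cap H_i^+]$ with the localization $\CC[x_1^{\pm 1}, \dots, x_{n-1}^{\pm 1}, x_n]$ of a polynomial ring, which is a normal domain with fraction field $\CC(M)$. A finite intersection of integrally closed subrings of a field, all sharing that field as fraction field, is again integrally closed (an element integral over the intersection is integral over each factor, hence lies in each), so $\CC[\overline{\Gamma}]$ is normal.

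Combining the three points, $\chi^{\nu}$ is a normalization morphism of $\tv^{\Gamma}$. For the final equivalence, $\tv^{\Gamma}$ is normal precisely when its normalization $\chi^{\nu}$ is an isomorphism, that is, when $\CC[\Gamma] = \CC[\overline{\Gamma}]$; since distinct monomials are linearly independent in $\CC[M]$, this happens exactly when $\Gamma = \overline{\Gamma}$, i.e. when $\Gamma$ is saturated.
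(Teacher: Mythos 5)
Your proposal is correct, and every step checks out: the characterization of a normalization morphism as a finite birational morphism with normal source (applied to the ring inclusion $\CC[\Gamma] \hookrightarrow \CC[\overline{\Gamma}]$, which is indeed the algebra map attached to $\chi^{\nu}$ by \autoref{prop:toricmorph}), the identification $\overline{\Gamma} = M \cap \sigma^{\vee}(\Gamma)$ via rationality of the defining linear system, finiteness via the monic equations $X^a - \chi^{am}$, birationality from $\Gamma^{\gp} = \overline{\Gamma}^{\gp} = M$, and normality of $\CC[\overline{\Gamma}]$ as a finite intersection of the normal rings $\CC[M \cap H_i^+] \cong \CC[x_1^{\pm 1}, \dots, x_{n-1}^{\pm 1}, x_n]$ inside $\CC(M)$. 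There is, however, nothing in the paper to compare it with: the paper gives no proof of \autoref{prop:charnormaltoric}, referring instead to \cite[Lemma 1, page 5]{KKMS 73}, and it likewise delegates the equality $\overline{\Gamma} = M \cap \sigma^{\vee}(\Gamma)$ --- your first step, which the paper states as an unlabeled lemma immediately after the proposition --- to \cite[Proposition 1.3.8]{CLS 11}. Your argument is essentially the standard one from those references (compare \cite[Proposition 1.3.5]{CLS 11}), so you have in effect supplied both the proof the paper outsources and that of its companion lemma. Two cosmetic points, neither a gap: in the degenerate case $\sigma(\Gamma) = \{0\}$ (e.g.\ $\Gamma = M$) the intersection over the $v_i$ is empty and should be read as $\CC[M]$ itself, which is normal as a localization of a polynomial ring; and the appeal to primitivity of the $v_i$ deserves the one-line justification that primitive elements of $N = \Hom(M, \Z)$ are exactly the surjective functionals $M \to \Z$, which is what yields the splitting $M \cap H_i^+ \cong \Z^{n-1} \times \N$.
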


\begin{example}  \label{ex:normaliz}
    Assume that $\Gamma = \N \langle 2, 3 \rangle$, that is,
    $\Gamma$ is the submonoid of $(\N, +)$ generated by
    $2$ and $3$. Therefore, $\tv^\Gamma = \CC[x^2, x^3] = \CC[u,v]/(u^3 - v^2)$,
    where $u = x^2, v = x^3$.
    The group generated by $\Gamma$ is $\Gamma^{\gp} = \Z$,
    and its saturation is $\overline{\Gamma} = \N$. Denote by $\nu : \Gamma \hookrightarrow \N$
    the associated inclusion morphism.
    The corresponding toric morphism $\chi^{\nu} : \tv^{\N} = \CC \to \tv^\Gamma$ is given
    at the level of the corresponding algebras by:
       \[   \begin{array}{cccc}
                    & \CC[x^2, x^3] = \CC[u,v]/(u^3 - v^2) &   \to   &   \CC[\N] = \CC[x]   \\
                      & (u,v)   &   \mapsto   &  (x^2, x^3).
               \end{array}     \]
     We get the usual normalization morphism $x \mapsto (x^2, x^3)$ of the cuspidal affine plane cubic
     curve defined by the equation $u^3 - v^2 =0$.
\end{example}

\begin{remark}   \label{rem:closedim}
      In \autoref{ex:normaliz}, the non-normal affine toric variety $\tv^{ \N \langle 2, 3 \rangle}$
     appears as the image of a finite toric morphism between normal affine toric varieties. Therefore,
    if one wants the category of toric varieties to be closed under taking images of finite toric
    morphisms, one is forced to admit inside it also non-normal toric varieties.
\end{remark}

It turns out that the saturation $\overline{\Gamma}$ of a toric monoid $\Gamma$ is determined by
its exponent cone $\sigma^{\vee}(\Gamma)$ of \autoref{def:twocones}
(see \cite[Proposition 1.3.8]{CLS 11}):

\begin{lemma}
    Let $\Gamma$ be a toric monoid. Then, we have that
       $\overline{\Gamma} = M \cap \sigma^{\vee}(\Gamma)$.
\end{lemma}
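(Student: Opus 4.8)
The plan is to first invoke \autoref{lemma:dualcones}~\eqref{secondefsigmagamma}, which gives $\sigma^{\vee}(\Gamma) = \R_{\geq 0}\Gamma$. With this, the asserted equality $\overline{\Gamma} = M \cap \sigma^{\vee}(\Gamma)$ becomes the concrete statement $\overline{\Gamma} = M \cap \R_{\geq 0}\Gamma$, where $\R_{\geq 0}\Gamma$ denotes the set of non-negative \emph{real} linear combinations of elements of $\Gamma$ inside $M_{\R}$. I would then prove the two inclusions separately, fixing once and for all a finite generating family $g_1, \dots, g_k \in M$ of the toric monoid $\Gamma$, so that $\Gamma = \N\langle g_1, \dots, g_k \rangle$ and $\R_{\geq 0}\Gamma = \R_{\geq 0}\langle g_1, \dots, g_k\rangle$.

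The inclusion $\overline{\Gamma} \subseteq M \cap \R_{\geq 0}\Gamma$ is the easy one. If $m \in \overline{\Gamma}$, then by \autoref{def:satmonoid} we have $m \in M$ and there is some $a \in \N^*$ with $a \cdot m \in \Gamma$. Writing $m = \tfrac{1}{a}(a \cdot m)$ exhibits $m$ as a non-negative real multiple of an element of $\Gamma$, so $m \in \R_{\geq 0}\Gamma$; combined with $m \in M$ this gives $m \in M \cap \R_{\geq 0}\Gamma$.

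The reverse inclusion $M \cap \R_{\geq 0}\Gamma \subseteq \overline{\Gamma}$ is where the real work lies. Suppose $m \in M$ lies in $\R_{\geq 0}\Gamma$, so that $m = \sum_{i=1}^{k} \lambda_i g_i$ for some real coefficients $\lambda_i \geq 0$; I must produce a positive integer $a$ with $a \cdot m \in \Gamma$, i.e. with $a \cdot m$ expressible as a non-negative \emph{integer} combination of the $g_i$. The hard part will be upgrading the real coefficients $\lambda_i$ to rational ones. This is where I would use the rationality of the data: the set $\{(\lambda_1,\dots,\lambda_k) \in \R^k : \sum_i \lambda_i g_i = m,\ \lambda_i \geq 0\}$ is a polyhedron cut out by affine equalities and inequalities all of whose coefficients are integers (the entries of the $g_i$ and of $m$, which live in the lattice $M \cong \Z^{\dim M}$). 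Such a rational polyhedron, being non-empty by hypothesis, contains a rational point; concretely, one may pass to a minimal-support solution, reducing to a square invertible rational linear system whose solution by Cramer's rule is automatically rational and still non-negative after pruning. This yields coefficients $\lambda_i \in \Q_{\geq 0}$ with $m = \sum_i \lambda_i g_i$.

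To conclude, I would clear denominators: choosing $a \in \N^*$ to be a common denominator of the $\lambda_i$, each $a\lambda_i$ is a non-negative integer, whence $a \cdot m = \sum_i (a\lambda_i) g_i \in \N\langle g_1,\dots,g_k\rangle = \Gamma$. By \autoref{def:satmonoid} this shows $m \in \overline{\Gamma}$, completing the inclusion and hence the proof. The only genuinely non-formal input is the rational-point property of non-empty rational polyhedra; everything else is bookkeeping, and finite generation of $\Gamma$ (guaranteed because $\Gamma$ is toric, \autoref{def:toricmonoid}) is exactly what makes the denominator-clearing step legitimate.
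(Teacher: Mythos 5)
Your proof is correct, but note that the paper does not actually prove this lemma: it simply cites \cite[Proposition 1.3.8]{CLS 11}, so there is no internal argument to compare against, and what you have written supplies the standard proof that the paper delegates to the literature. The easy inclusion $\overline{\Gamma} \subseteq M \cap \sigma^{\vee}(\Gamma)$ is exactly as you say, after using \autoref{lemma:dualcones}~\eqref{secondefsigmagamma} to identify $\sigma^{\vee}(\Gamma)$ with $\R_{\geq 0}\Gamma = \R_{\geq 0}\langle g_1, \dots, g_k\rangle$. For the converse, the only genuine content is upgrading non-negative real coefficients to non-negative rational ones, and both of your justifications are sound: abstractly, a non-empty polyhedron cut out by affine equalities and inequalities with rational data contains a rational point; concretely, your Carath\'eodory-style pruning works because a non-negative solution of minimal support must involve linearly independent vectors $g_i$ (any dependence relation $\sum_i \mu_i g_i = 0$ with some $\mu_i > 0$ lets one pass to $\lambda - t\mu$ with $t = \min_{\mu_i > 0} \lambda_i/\mu_i$, killing a coordinate while preserving non-negativity), whereupon the restricted system has a unique solution, rational by Cramer's rule, which must coincide with the non-negative real one. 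Clearing denominators then puts $a \cdot m$ in $\N\langle g_1, \dots, g_k\rangle = \Gamma$, which is precisely the condition $m \in \overline{\Gamma}$ of \autoref{def:satmonoid}. So your argument is complete and self-contained; it is also, in substance, the same argument as in the reference the paper points to, so nothing is lost relative to the paper's intent.
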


Combining this with \autoref{prop:identifdirlimit} and \autoref{prop:charfinitesetmon},
we see that {\em normal} toric varieties can be described in terms of fans alone, without
the need to decorate them with monoids:

\begin{theorem}  \label{thm:fanormalvar}
     Let $T$ be a complex algebraic torus, with weight lattice $N$. Then, the normal
     toric varieties with dense torus $T$ are the direct limits of the normal affine
     varieties $\tv^{M \cap \tau^{\vee}}$, for $\tau$ varying among the cones of a fan
     $\cF$ contained in $N_{\R}$, where the connecting morphisms
     $\tv^{M \cap \tau^{\vee}}\to \tv^{M \cap \sigma^{\vee}}$ are those determined
     by the inclusions $\tau \hookrightarrow \sigma$ of cones of $\cF$.
\end{theorem}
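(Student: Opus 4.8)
The plan is to deduce the theorem from the combinatorial dictionary already established: \autoref{prop:identifdirlimit} presents any affinely covered toric variety as the direct limit of its affine toric charts $\tv^{\Gamma}$, \autoref{prop:charfinitesetmon} characterizes exactly which finite families of monoids $\Gamma \subseteq M$ arise this way, and \autoref{prop:charnormaltoric} identifies the normal affine charts as those whose monoid is saturated. Since normality is a local property of an algebraic variety, a toric variety in the sense of \autoref{def:toricvar} is normal if and only if every one of its affine toric charts is normal. Thus the content reduces to translating the saturation condition on monoids into a statement about the associated cones, and to checking that this translation is compatible with the gluing data.

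First I would treat the forward direction. Let $X$ be a normal toric variety with dense torus $T$. By Sumihiro's theorem (cited in the remark following \autoref{def:toricvar}) it is affinely covered, so by \autoref{prop:identifdirlimit} it is the direct limit of its toric affine charts, and the corresponding family $I$ of monoids satisfies the conditions of \autoref{prop:charfinitesetmon}; in particular $\{\sigma(\Gamma) : \Gamma \in I\}$ is a fan $\cF$ in $N_{\R}$. By \autoref{prop:charnormaltoric} each $\Gamma \in I$ is saturated, so the lemma preceding the theorem gives $\Gamma = \overline{\Gamma} = M \cap \sigma^{\vee}(\Gamma)$. Setting $\tau := \sigma(\Gamma)$ and using that duality of cones is an involution, this reads $\Gamma = M \cap \tau^{\vee}$, which exhibits $X$ in the stated form, the connecting morphisms being those induced by the face inclusions of $\cF$.

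For the converse I would start from a fan $\cF$ in $N_{\R}$ and set $\Gamma_{\tau} := M \cap \tau^{\vee}$ for each $\tau \in \cF$. Each $\Gamma_{\tau}$ is integral (being a submonoid of $M$), finitely generated by Gordan's lemma, and generates $M$ as a group because $\tau^{\vee}$ is full-dimensional (as $\tau$ is sharp); hence it is a toric monoid. It is saturated, since $a \cdot m \in \tau^{\vee}$ with $a \in \N^*$ forces $m \in \tau^{\vee}$, so by \autoref{prop:charnormaltoric} the chart $\tv^{\Gamma_{\tau}}$ is normal. Using \autoref{lemma:dualcones}\eqref{secondefsigmagamma} one obtains $\sigma^{\vee}(\Gamma_{\tau}) = \R_{\geq 0}\Gamma_{\tau} = \tau^{\vee}$, whence $\sigma(\Gamma_{\tau}) = \tau$, so the family $\{\Gamma_{\tau}\}$ recovers $\cF$ as its fan of weight cones. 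It then remains to verify that $\{\Gamma_{\tau} : \tau \in \cF\}$ meets the four conditions of \autoref{prop:charfinitesetmon}, so that its direct limit is a genuine toric variety with dense torus $T$; normality of the direct limit follows from the normality of its charts.

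The main obstacle is the last verification, namely the face-compatibility condition \eqref{pointfaces} of \autoref{prop:charfinitesetmon} (equivalently the gluing axiom of \autoref{def:fanmonoids}). This is where the combinatorics of cones genuinely enters: one must show that when $\tau$ is a face of $\sigma$, writing $\tau = \sigma \cap u^{\perp}$ for a suitable $u \in \sigma^{\vee} \cap M$ yields $\tau^{\vee} = \sigma^{\vee} + \R_{\geq 0}(-u)$, and consequently $M \cap \tau^{\vee} = (M \cap \sigma^{\vee}) + \N(-u) = \Gamma_{\sigma} + (\Gamma_{\sigma} \cap \tau^{\perp})^{\gp}$. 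This localization identity, together with \autoref{prop:chartoraffopen}, guarantees that the charts $\tv^{\Gamma_{\tau}}$ glue along torus-invariant affine opens exactly as prescribed by the face structure of $\cF$, which completes the argument.
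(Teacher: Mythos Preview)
Your proposal is correct and follows exactly the route the paper indicates: the paper does not give a formal proof of this theorem but simply states, immediately before it, that the result follows by combining the lemma $\overline{\Gamma} = M \cap \sigma^{\vee}(\Gamma)$ with \autoref{prop:identifdirlimit} and \autoref{prop:charfinitesetmon}. You have spelled out that combination in full, including the converse direction and the face-compatibility check via the localization identity $M \cap \tau^{\vee} = \Gamma_{\sigma} + (\Gamma_{\sigma} \cap \tau^{\perp})^{\gp}$, which the paper leaves entirely to the reader.
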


\medskip
\subsection{Toroidal geometry}   \label{ssec:toroidalgeom}
$ \ $
\medskip

In this subsection we introduce {\em toroidal varieties} (see \autoref{def:toroidal}) and their
{\em morphisms} (see \autoref{def:toroidalmorph}). These complex analytic
varieties, locally modeled on toric ones, were introduced
by Kempf, Knudson, Mumford and Saint-Donat in \cite[Chapter II.1]{KKMS 73}.
\medskip

Toric varieties are highly structured complex algebraic varieties: they are endowed with a group action, therefore  with a partition into orbits. {\em Toroidal varieties} are less structured varieties, which imitate in some respect the toric ones. Namely, they are first of all complex analytic varieties locally isomorphic to toric varieties. But this is not the whole definition. One also fixes a hypersurface in them, and one asks those local isomorphisms to send the hypersurface onto the toric boundary:

\begin{definition}   \label{def:toroidal}
       A {\bf toroidal variety} is a pair $(X, \partial X)$ consisting of a complex analytic variety
       $X$ (the {\bf total space}) and a reduced divisor $\partial X$ in it, such that every point $x \in X$
       is contained in an open set $U$ with the property that
       the pair $(U, U \cap \partial X)$ is analytically isomorphic to the pair
       $(V, V \cap \partial Y)$ where $V$ is an open subset of an affine toric variety $Y$
       with toric boundary  $\partial Y$ in the sense of \autoref{def:toricvar}.
       Such an isomorphism $(U, U \cap \partial X) \xrightarrow{\sim} (V, V \cap \partial Y)$ is called
       a {\bf toric chart} of $(X, \partial X)$. The divisor
       $\boxed{\partial X}$ is called the {\bf toroidal boundary} of the toroidal variety  $(X, \partial X)$.
       In order not to charge notations, we will denote a toroidal variety $(X, \partial X)$ simply by $X$.
\end{definition}

Note that in the previous definition the point $x$ is not constrained to lie in $\partial X$.
If one takes it in the complement $X\  \setminus \ \partial X$ of the toroidal boundary,
then one deduces that {\em this complement is necessarily smooth}. Note also that
a toric variety is canonically toroidal if one defines its toroidal boundary to be equal to its
toric boundary (see \autoref{def:toricvar}). What is lost when one passes from
a toric variety to the associated toroidal
variety is the action of the torus, as well as the notion of a monomial. But not everything is
lost about them: the union of the supports of the divisors of all the monomials
is exactly the toroidal boundary.

The notion of a toroidal variety was introduced by Kempf, Knudsen, Mumford and Saint-Donat
in \cite{KKMS 73} under a slightly different form.
Namely, they considered the pairs $(X, X\  \setminus \ \partial X)$ instead of $(X, \partial X)$,
as the emphasis in that book was on {\em partial compactifications} of complex {\em manifolds}, and
they considered compact toroidal varieties $X$ as especially nice partial compactifications
of $X\  \setminus \ \partial X$. Moreover,
they constrained the resulting space $X$ to be normal.
We drop this condition for the same reason we dropped it in the
definition of toric varieties (see \autoref{rem:closedim}).

The simplest toroidal varieties are defined by divisors with normal crossings:

\begin{definition}  \label{def:nctorvar}
    A complex variety $X$ is called a {\bf normal crossings toroidal variety} if $X$ is a
    manifold and $\partial X$ is a {\bf divisor with normal crossings} in it,
    that is, a divisor such that each point
    of $\partial X$ has a neighborhood $U$ in $X$ with the property that the pair $(U, U \cap \partial X)$
    is analytically isomorphic to a pair of the form $(V, V \cap Z(z_1 \cdots z_n))$,
    where $V$ is an open set in $\CC^n$ and $Z(z_1 \cdots z_n)$ is the zero locus of the
    product $z_1 \cdots z_n$.
\end{definition}

Normal crossings toroidal varieties are exactly the toroidal varieties whose total spaces are
smooth. Indeed, the only smooth toric varieties of a fixed dimension $n$
are $\CC^n = \Hom(\N^n, \CC)$ and its torus-invariant open subsets.

In the same way as toroidal varieties are modeled on toric varieties, toroidal morphisms are
modeled on toric morphisms, in the sense of \autoref{def:toricvar}:

\begin{definition}   \label{def:toroidalmorph}
    Let $X_1$ and $X_2$ be two toroidal varieties.
    A {\bf toroidal morphism}
      \[   \phi : X_1 \to X_2 \]
    is a complex analytic morphism such that for every $x \in X_1$, there are toric charts
    $\chi_1 : (U_1,  U_1 \cap \partial X_1) \xrightarrow{\sim} (V_1, V_1 \cap \partial Y_1)$  and
    $\chi_2 : (U_2, U_2 \cap \partial X_2) \xrightarrow{\sim} (V_2, V_2 \cap \partial Y_2)$
    with the property that  $x \in U_1$, $\phi(x) \in U_2$ and the composition
    $\chi_2 \circ \phi \circ \chi_1^{-1} : (V_1, V_1 \cap \partial Y_1)
       \to (V_2, V_2 \cap \partial Y_2)$ making the following diagram  commutative
            \[  \xymatrix{
                       (U_1, U_1 \cap \partial X_1)
                             \ar[rr]^{\phi}   \ar[d]_{\chi_1} &
                            & (U_2, U_2 \cap \partial X_2)     \ar[d]^{\chi_2} \\
                         (V_1, V_1 \cap \partial Y_1)  \ar[rr]_{\chi_2 \circ \phi \circ \chi_1^{-1}}&  &
                          (V_2, V_2 \cap \partial Y_2)   } \]
     is the restriction to $V_1$ of a toric morphism from $Y_1$ to $Y_2$.
\end{definition}

\begin{example}   \label{ex:toroidmorph}  $\ $

  \begin{enumerate}
     \item Any toric morphism is automatically toroidal if one endows its source and target
           with their canonical toroidal structures.
      \item  If $X$ is a manifold and $f : X \to \CC$ is a holomorphic function whose divisor
        $Z(f) \hookrightarrow X$ has normal crossings, then $f$ becomes a toroidal
        morphism if one defines $\partial X$ to be the reduction of $Z(f)$ and $\partial \CC$
        to be the origin.
    \end{enumerate}
\end{example}

\section{\bf Toric, toroidal and A'Campo's real oriented blowups}   \label{sec:torictoroidrob}

In this section we explain how the classical passage to polar coordinates
$ \R_{\geq 0}  \times \bS^1  \to    \CC$
recalled in \autoref{ssec:polcoord} may be generalized into an operation of a {\em real oriented
blowup} of any complex affinely covered toric variety (see \autoref{ssec:polcoordtoric}),
then of any toroidal variety (see \autoref{ssec:realbutoroidal}). A'Campo's definition
of the real oriented blowup of a divisor with simple normal crossings in a complex manifold
looks different, but it turns out to be a special case of real oriented blowup of a toroidal
variety (see \autoref{ssec:acamporobu}). Note that Hubbard, Papadopol and Veselov
introduced in \cite[Section 5]{HPV 00} a notion of a real oriented blowup of any analytic
subset of a real analytic manifold. We do not discuss it in this text.

\medskip
\subsection{Real oriented blowups of toric varieties} $\:$  \label{ssec:polcoordtoric}
\medskip

 In this subsection we explain how to perform the real oriented blowup of any complex affine
 toric variety (see \autoref{def:robuafftoric}), of any toric morphism between
 such varieties (see \autoref{def:robuafftoricmorph}) and finally of any complex
 affinely covered toric variety (see \autoref{def:robarbtoric}).
 \medskip

{\em We continue assuming that toric varieties are affinely covered in the sense
of \autoref{def:toricvar}.}

Let us define first the real oriented blowups of complex affine toric varieties. One simply replaces
the monoid $(\CC, \cdot)$ in \autoref{def:afftoric} of complex affine toric varieties by
the monoid $( \R_{\geq 0} \times \bS^1, \cdot)$, as seen already in formula
(\ref{eq:robarbtoricmon}):

\begin{definition}   \label{def:robuafftoric}
     Let $\Gamma$ be a toric monoid and let $\tv^\Gamma$ be the associated complex
     affine toric variety.
     The {\bf real oriented blowup of $\tv^\Gamma$}
     is the real semi-algebraic variety defined by
     \[   \boxed{(\tv^\Gamma)^{\rob}} := \Hom(\Gamma,  \R_{\geq 0} \times \bS^1), \]
     where the morphisms are taken in the category of monoids.
    The {\bf real oriented blowup morphism of $\tv^\Gamma$}  is defined by:
        \begin{equation*}   \label{eq:robutoric}
              \begin{array}{cccc}
                 \boxed{\tau_{\tv^\Gamma}} : & (\tv^\Gamma)^{\rob}   & \to  &    \tv^\Gamma   \\
                                     &     x   & \mapsto  &  \tau_{\CC} \circ x
               \end{array}
          \end{equation*}
      In this formula,  $ \tau_{\CC} \circ x$ denotes the composition of the
      morphisms of monoids $x : \Gamma \to  \R_{\geq 0} \times \bS^1$ and
      $\tau_{\CC} :  \R_{\geq 0} \times \bS^1 \to \CC$
      (see formula (\ref{eq:changepolarcomplexmap})).
\end{definition}

\begin{example}   \label{ex:smoothafftoricrob}
   Consider $\Gamma := \N^n$, as in \autoref{ex:smoothafftoric}. Then $\tv^{\N^n} = \CC^n$ and
   $ (\tv^{\N^n})^{\rob} = (\R_{\geq 0} \times \bS^1)^n$. The real oriented blowup morphism
   $\tau_{\tv^{\N^n}} : (\R_{\geq 0} \times \bS^1)^n \to \CC^n$ is the cartesian product
   of $n$ copies of the prototypical real oriented blowup morphism $\tau_{\CC}$ of
   formula (\ref{eq:changepolarcomplexmap}). Therefore, it coincides with the
   real oriented blowup map $\tau_{\CC^n}$ from \autoref{def:robuCn}. This shows that
   \autoref{def:robuafftoric} is a generalization of  \autoref{def:robuCn} to all complex affine
   toric varieties.
\end{example}

The real oriented blowup of a complex affine toric variety is not a toric variety in the sense
of \autoref{def:toricvar}, because it is not a complex algebraic variety. Nevertheless, it satisfies the
remaining properties of that definition. This motivates us to introduce the following concepts:

\begin{definition}   \label{def:toricvarS}
    Let $\mathcal{C}$ be a subcategory of the category of topological spaces,
    containing the category of complex algebraic varieties.
    An object $X$ of $\mathcal{C}$ is called {\bf $\mathcal{C}$-toric}
    if it is endowed with an action of a complex
    algebraic torus $T$, called its {\bf dense torus}  and if
    it has a base point $1$ whose orbit is an open dense subset, identified with $T$
            by the multiplication map $T \to X$ which sends $t \in T$ to $t \cdot 1 \in X$.
    A {\bf $\mathcal{C}$-toric morphism} is a morphism in the category $\mathcal{C}$
    between two $\mathcal{C}$-toric spaces, which is
    equivariant relative to the actions of their dense algebraic tori and which restricts to a
     morphism of groups between them.
\end{definition}

One may check that:

\begin{proposition}
      Real oriented blowups of complex affine toric varieties are {\em semi-algebraic toric
     varieties}, that is $\mathcal{SA}$-toric varieties for the category $\boxed{\mathcal{SA}}$ of
     semi-algebraic spaces and morphisms.
\end{proposition}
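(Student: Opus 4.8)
The plan is to verify directly the three requirements of \autoref{def:toricvarS} for the space $(\tv^\Gamma)^{\rob} = \Hom(\Gamma, \R_{\geq 0} \times \bS^1)$ of \autoref{def:robuafftoric}, the ambient category being $\mathcal{SA}$. Throughout I identify $\Gamma$ with a finitely generated submonoid of its lattice $M = \Gamma^{\gp}$ and fix generators $\gamma_1, \dots, \gamma_k$; a finitely generated commutative monoid is finitely presented, so finitely many relations $\sum_j a_{ij}\gamma_j = \sum_j b_{ij}\gamma_j$ generate all relations of $\Gamma$.

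First I would equip $(\tv^\Gamma)^{\rob}$ with a semi-algebraic structure. Evaluation at the generators gives an injection $(\tv^\Gamma)^{\rob} \hookrightarrow (\R_{\geq 0} \times \bS^1)^k$, $x \mapsto (x(\gamma_1), \dots, x(\gamma_k))$, and, writing each coordinate as $(r_j, u_j) \in \R_{\geq 0} \times (\bS^1 \subset \R^2)$, the image is exactly the locus where each relation holds, that is, where $\prod_j r_j^{a_{ij}} = \prod_j r_j^{b_{ij}}$ and $\prod_j u_j^{a_{ij}} = \prod_j u_j^{b_{ij}}$ for all $i$. Since the exponents are non-negative integers, these are real polynomial equations, so the image is a semi-algebraic subset of $\R^{3k}$, which makes $(\tv^\Gamma)^{\rob}$ an object of $\mathcal{SA}$. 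Next, the dense torus of $\tv^\Gamma$ is $T = \Hom(M, \CC^*)$, and the polar-coordinate isomorphism $\CC^* \xrightarrow{\sim} \R_{>0} \times \bS^1$ together with the universal property of $M = \Gamma^{\gp}$ yields semi-algebraic identifications $T \cong \Hom(M, \R_{>0}\times\bS^1) \cong \Hom(\Gamma, \R_{>0}\times\bS^1)$. As $\R_{\geq 0}\times\bS^1$ is commutative, pointwise multiplication $(t\cdot x)(\gamma) := t(\gamma)\, x(\gamma)$ is a well-defined action of $T$ on $(\tv^\Gamma)^{\rob}$; read off on the generators it is given by the multiplication of $\R_{\geq 0}\times\bS^1$, which is polynomial, so the action is a $\mathcal{SA}$-morphism.

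It then remains to produce the distinguished open dense orbit. I take the base point $1$ to be the constant morphism $\gamma \mapsto (1,1)$; its orbit is $\{t \cdot 1 : t \in T\} = \Hom(\Gamma, \R_{>0}\times\bS^1)$, which under the embedding above is the open locus $\{r_1 > 0, \dots, r_k > 0\}$, and the multiplication map $t \mapsto t\cdot 1 = t$ is precisely the identification of $T$ with this orbit. Openness is immediate from the strict inequalities, so the crux is density. Given $x = (r,u)$ with radial part $r \colon \Gamma \to \R_{\geq 0}$, I would observe that $\Phi := \{\gamma : r(\gamma) > 0\}$ is a face of $\Gamma$ in the sense of \autoref{def:facemon}, choose a functional $w \in N_{\R}$ that is non-negative on $\Gamma$ and vanishes exactly on $\Phi$ (a supporting functional of the corresponding face of the cone $\sigma(\Gamma)$), and extend $r|_\Phi$ to a strictly positive morphism $\tilde r \colon \Gamma \to \R_{>0}$ (possible because $(\R_{>0},\cdot)\cong(\R,+)$ is a divisible, hence injective, abelian group). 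Then $r_\varepsilon(\gamma) := \tilde r(\gamma)\, \varepsilon^{\, w\cdot\gamma}$ is a strictly positive monoid morphism for every $\varepsilon \in (0,1]$, and as $\varepsilon \to 0^+$ one has $r_\varepsilon \to r$ on each generator (it equals $r$ on $\Phi$, where $w$ vanishes, and tends to $0 = r$ off $\Phi$, where $w$ is positive). Hence $(r_\varepsilon, u)$ lies in the open orbit and converges to $x$, proving density.

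The main obstacle is this density step: unlike openness and semi-algebraicity, which are formal once coordinates are fixed, it requires the face/supporting-functional structure of $\Gamma$ together with an extension-of-morphisms argument, and the naive idea of rescaling $r$ multiplicatively fails on the locus where $r$ vanishes, so the perturbation must be introduced additively in the exponent through $w$.
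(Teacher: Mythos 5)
Your argument is correct, and there is nothing in the paper to compare it against: the proposition is introduced by ``One may check that:'' and no proof is given, so your write-up supplies precisely the verification the author left to the reader. Two precisions are worth recording. First, the semi-algebraic structure rests on the fact that a finitely generated commutative monoid is finitely presented (R\'edei's theorem), which you invoke; granted that, the image of the evaluation embedding into $(\R_{\geq 0}\times\bS^1)^k$ is indeed cut out by the real polynomial equations coming from the relations, the identification $T=\Hom(M,\CC^*)\cong\Hom(\Gamma,\R_{>0}\times\bS^1)$ is semi-algebraic (polar coordinates are semi-algebraic, and monoid morphisms into a group factor through $\Gamma^{\gp}=M$), and the action is componentwise multiplication, hence a morphism of $\mathcal{SA}$. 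Second, in the density step --- which you rightly single out as the only non-formal point --- the existence of your functional $w$ is exactly the content of the inclusion-reversing bijection $\tau\mapsto\Gamma\cap\tau^{\perp}$ between faces of $\sigma(\Gamma)$ and faces of $\Gamma$ recorded in the remark following \autoref{prop:charfinitesetmon}: writing $\Phi=\Gamma\cap\tau^{\perp}$, any $w$ chosen in the \emph{relative interior} of $\tau$ is non-negative on $\Gamma$ and vanishes on $\Gamma$ exactly along $\Phi$, since a linear form that is non-negative on a cone and vanishes at a relative interior point vanishes on the whole cone. With that precision, your extension of $r|_{\Phi}$ to $\tilde r\colon\Gamma\to\R_{>0}$ through $\Phi^{\gp}\subseteq M$ by divisibility (injectivity) of $\R_{>0}$, and the perturbation $r_{\varepsilon}(\gamma)=\tilde r(\gamma)\,\varepsilon^{w\cdot\gamma}$, give a family of points of the open orbit converging to the given point on each generator, hence in the embedding topology; the proof is complete.
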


One may also define the real oriented blowup of a toric morphism between affine toric
varieties (recall \autoref{prop:toricmorph}):

\begin{definition}   \label{def:robuafftoricmorph}
        Let $\Gamma_1$ and $\Gamma_2$ be two toric monoids and
        $\mu : \Gamma_2 \to  \Gamma_1$ a morphism of monoids. Let
                     $ \chi^{\mu} :   \tv^{\Gamma_1}   \to     \tv^{\Gamma_2} $
     be the associated toric morphism.
     Its {\bf real oriented blowup} is the map
         \begin{equation*}  \label{eq:morphtoricgeomrob}
              \begin{array}{cccc}
                   \boxed{(\chi^{\mu})^{\rob}} :  & (\tv^{\Gamma_1})^{\rob} &   \to   &   (\tv^{\Gamma_2})^{\rob}   \\
                      & x   &   \mapsto   &  x \circ \mu ,
               \end{array}
        \end{equation*}
     where  $x \circ \mu$ denotes the composition
        $ \Gamma_2  \xrightarrow{\mu} \Gamma_1  \xrightarrow{x} \R_{\geq 0} \times \bS^1. $
\end{definition}

It is immediate to check from the definition that:

\begin{proposition}  \label{prop:functrob}
   The real oriented blowup is a functor from the category of
    complex affine toric varieties to the category of semi-algebraic toric varieties.
\end{proposition}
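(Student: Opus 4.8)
The plan is to exhibit the real oriented blowup as a functor by specifying its action on objects and morphisms and then checking the two functoriality axioms, both of which will reduce to the associativity of composition of monoid morphisms. On objects the functor sends $\tv^{\Gamma}$ to $(\tv^{\Gamma})^{\rob}$, and that this target is indeed a semi-algebraic toric variety is exactly the content of the preceding proposition, so nothing new is needed there. On morphisms I would invoke \autoref{prop:toricmorph}, which says that every toric morphism $\tv^{\Gamma_1} \to \tv^{\Gamma_2}$ is of the form $\chi^{\mu}$ for a unique $\mu \in \Hom(\Gamma_2, \Gamma_1)$; this uniqueness guarantees that the rule $\chi^{\mu} \mapsto (\chi^{\mu})^{\rob}$ is well defined, independently of how a given toric morphism is presented.

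The first thing to verify is that $(\chi^{\mu})^{\rob}$ really is a morphism in the target category, i.e.\ an $\mathcal{SA}$-toric morphism in the sense of \autoref{def:toricvarS}. Writing $x \mapsto x \circ \mu$ in terms of the values of $x$ on a finite generating set of $\Gamma_1$ shows that the map is given by multiplicative (monomial) expressions in the coordinates of $(\R_{\geq 0} \times \bS^1)^{r}$, hence is semi-algebraic. For equivariance I would extend $\mu$ to $\mu^{\gp} : M_2 \to M_1$ on the generated groups $M_i := \Gamma_i^{\gp}$; the dense torus of $(\tv^{\Gamma_i})^{\rob}$ is $\Hom(M_i, \R_{\geq 0} \times \bS^1) = \Hom(M_i, \R_{> 0} \times \bS^1)$, identified with the semi-algebraic realization of $T_i = \Hom(M_i, \CC^*)$ through $\tau_{\CC}$. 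Under the pointwise multiplication action $(t \cdot x)(\gamma) = t(\gamma)\, x(\gamma)$ one computes $(t \cdot x) \circ \mu = (t \circ \mu^{\gp}) \cdot (x \circ \mu)$, so $(\chi^{\mu})^{\rob}$ is equivariant with respect to the group morphism $t \mapsto t \circ \mu^{\gp}$, which is precisely its restriction to the dense tori.

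It then remains to check the two functoriality axioms. The identity of $\tv^{\Gamma}$ corresponds to $\mu = \mathrm{id}_{\Gamma}$, and $(\chi^{\mathrm{id}})^{\rob}(x) = x \circ \mathrm{id} = x$, so identities are preserved. For composition, given $\mu : \Gamma_2 \to \Gamma_1$ and $\nu : \Gamma_3 \to \Gamma_2$ with associated toric morphisms $\chi^{\mu} : \tv^{\Gamma_1} \to \tv^{\Gamma_2}$ and $\chi^{\nu} : \tv^{\Gamma_2} \to \tv^{\Gamma_3}$, one has $\chi^{\nu} \circ \chi^{\mu} = \chi^{\mu \circ \nu}$ at the level of toric varieties; applying the blowup gives $(\chi^{\nu} \circ \chi^{\mu})^{\rob}(x) = x \circ (\mu \circ \nu) = (x \circ \mu) \circ \nu = \big((\chi^{\nu})^{\rob} \circ (\chi^{\mu})^{\rob}\big)(x)$, the middle equality being associativity of composition of maps. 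Hence composition is preserved (covariantly), and the assignment is a functor.

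The computations above are entirely formal; the only point that is not a mere manipulation of compositions is the verification that $(\chi^{\mu})^{\rob}$ stays inside the $\mathcal{SA}$-toric category, which rests on identifying the dense torus of the blowup with the semi-algebraic realization of the complex torus via $\tau_{\CC}$. I expect this to be the sole (mild) obstacle: once that identification is in place --- and it is already implicit in the preceding proposition on objects --- functoriality itself is nothing more than associativity of composition.
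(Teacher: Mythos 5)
Your proposal is correct and follows exactly the route the paper intends: the paper offers no written proof beyond the remark that the statement ``is immediate to check from the definition,'' and your verification (objects handled by the preceding proposition, morphisms via the bijection $\mu \leftrightarrow \chi^{\mu}$ of \autoref{prop:toricmorph}, equivariance through $t \mapsto t \circ \mu^{\gp}$, and the two axioms reduced to associativity of composition) is precisely that routine check, spelled out. No discrepancy to report.
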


Moreover, this functor is compatible with the real oriented blowup morphisms of
\autoref{def:robuafftoric}:

\begin{proposition}
    The following diagram is commutative:
       \[  \xymatrix{
                                  (\tv^{\Gamma_1})^{\rob}
                                  \ar[rr]^{(\chi^{\mu})^{\rob}}
                                  \ar[d]_{\tau_{\tv^{\Gamma_1}}} &
                                                &  ( \tv^{\Gamma_2})^{\rob}   \ar[d]^{\tau_{\tv^{\Gamma_2}}} \\
                                    \tv^{\Gamma_1}  \ar[rr]_{\chi^{\mu}}&  & \tv^{\Gamma_2}. }  \]
\end{proposition}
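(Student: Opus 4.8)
The plan is to prove commutativity by a direct pointwise verification, exploiting the fact that every one of the four arrows in the square is realized as either pre- or post-composition with a fixed morphism of monoids, so that the asserted identity collapses to the associativity of composition. Since all four spaces are defined as $\Hom$-sets in the category of commutative monoids (with $\CC$ and $\R_{\geq 0}\times\bS^1$ viewed multiplicatively), it suffices to check that the two composite maps $(\tv^{\Gamma_1})^{\rob}\to\tv^{\Gamma_2}$ agree on an arbitrary element.

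Concretely, I would fix $x\in(\tv^{\Gamma_1})^{\rob}=\Hom(\Gamma_1,\R_{\geq 0}\times\bS^1)$ and trace it along both paths. Going first to the right and then down, \autoref{def:robuafftoricmorph} gives $(\chi^{\mu})^{\rob}(x)=x\circ\mu$, and then \autoref{def:robuafftoric} applied to $\tv^{\Gamma_2}$ yields $\tau_{\tv^{\Gamma_2}}(x\circ\mu)=\tau_{\CC}\circ(x\circ\mu)$. Going first down and then to the right, \autoref{def:robuafftoric} applied to $\tv^{\Gamma_1}$ gives $\tau_{\tv^{\Gamma_1}}(x)=\tau_{\CC}\circ x$, and then \autoref{prop:toricmorph} gives $\chi^{\mu}(\tau_{\CC}\circ x)=(\tau_{\CC}\circ x)\circ\mu$. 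Assembling these, the equality to be proved is simply
\[
\tau_{\tv^{\Gamma_2}}\bigl((\chi^{\mu})^{\rob}(x)\bigr)=\tau_{\CC}\circ(x\circ\mu)=(\tau_{\CC}\circ x)\circ\mu=\chi^{\mu}\bigl(\tau_{\tv^{\Gamma_1}}(x)\bigr),
\]
where the middle step is the associativity of composition of the three maps $\Gamma_2\xrightarrow{\mu}\Gamma_1\xrightarrow{x}\R_{\geq 0}\times\bS^1\xrightarrow{\tau_{\CC}}\CC$.

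The only points requiring a word of justification are that each intermediate composite lands in the correct $\Hom$-set, and these are immediate: $x\circ\mu$ is a morphism of monoids as a composite of such, so $\tau_{\tv^{\Gamma_2}}$ applies to it, and $\tau_{\CC}\circ x$ is a morphism of monoids because $\tau_{\CC}$ is one (its \emph{algebraic aspect}, recorded in \autoref{rem:twoaspectsrobu}), so that $\chi^{\mu}$ applies to it. I do not expect any genuine obstacle here: the content of the statement is formal, and the computation above is the entire proof. The conceptual reason the square commutes is that rounding a toric morphism amounts to precomposing with $\mu$ while the rounding morphisms amount to postcomposing with $\tau_{\CC}$, and these two operations commute with one another for trivial reasons.
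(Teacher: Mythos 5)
Your proof is correct and is essentially identical to the paper's own argument: the paper also verifies commutativity by tracing an arbitrary $x \in \Hom(\Gamma_1, \R_{\geq 0}\times\bS^1)$ through both paths of the square and observing that each yields $\tau_{\CC}\circ x\circ\mu$, the equality being pure associativity of composition. No differences worth noting.
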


Indeed, if
   $x \in (\tv^{\Gamma_1})^{\rob} = \Hom(\Gamma_1,  \R_{\geq 0} \times \bS^1)$,
then its images by the arrows of the previous diagram are:
     \[  \xymatrix{
                                x
                                  \ar[rr]^{(\chi^{\mu})^{\rob}}
                                  \ar[dd]_{\tau_{\tv^{\Gamma_1}}} &
                                                &   x \circ \mu   \ar[dd]^{\tau_{\tv^{\Gamma_2}}} \\
                                            &     \circlearrowleft  &  \\
                                    \tau_{\CC} \circ x  \ar[rr]_{\chi^{\mu}}&  & \tau_{\CC} \circ x \circ \mu. }  \]

Consider now an arbitrary toric variety. One may define its real oriented blowup
analogously to \autoref{def:toricfromfanmon}, by gluing
the real oriented blowups of its affine Zariski open invariant subvarieties through the
real oriented blowups of their gluing morphisms:

\begin{definition}  \label{def:robarbtoric}
    Let $\hat{\cF}$ be a fan of monoids in the sense of \autoref{def:fanmonoids}, with underlying fan $\cF$.
    The {\bf real oriented blowup} of its associated toric variety $\tv_{\hat{\cF}}$
    is defined as the direct limit:
     \[  \boxed{\tv_{\hat{\cF}}^{\rob} }\  := \lim_{\substack{\longrightarrow \\ \sigma \in \cF}}
            (\tv^{\Gamma_{\sigma}})^{\rob}.
           \]
    Here, the connecting morphisms between the real algebraic toric varieties
    $(\tv^{\Gamma_{\sigma}})^{\rob}$
    are those induced by the inclusion morphisms $\Gamma_{\tau} \hookrightarrow \Gamma_{\sigma}$,
    whenever $\tau \subseteq \sigma$. The real oriented blowup morphisms
      $\tau_{\tv^\Gamma} :  (\tv^\Gamma)^{\rob}    \to      \tv^\Gamma$ glue
     into a corresponding global {\bf real oriented blowup morphism}
       \[ \boxed{\tau_{ \tv_{\hat{\cF}}}} :  \tv_{\hat{\cF}}^{\rob}    \to      \tv_{\hat{\cF}}. \]
\end{definition}

As in the complex algebraic case, the real oriented real blowup of a toric variety
is a semi-algebraic toric variety, in the sense of \autoref{def:toricvarS}.

\medskip
\subsection{Real oriented blowups of toroidal varieties} $\:$  \label{ssec:realbutoroidal}
\medskip

In this subsection we define the real oriented blowup of a toroidal variety by gluing
the real oriented blowups of its toric charts (see \autoref{def:toroidrob}).

\medskip

Once one knows how to define real oriented blowups of toric varieties, an analogous notion
may be defined for toroidal varieties using their {\em toric charts} in the sense of
\autoref{def:toroidal}. One has to ensure that this notion is well-defined on overlaps
of charts. Indeed:

\begin{proposition}   \label{prop:toroidcompar}
    Let $X$ be a toroidal variety and let $\phi_i : (U, U \cap \partial X) \to (V_i, V_i \cap \partial Y_i)$
    for $i \in \{1, 2\}$ be two charts of it, defined on the same open set $U$. Consider the
    real oriented blowup morphisms $\tau_{Y_i}: Y_i^{\rob} \to Y_i$  of the toric varieties $Y_i$
    and their restrictions $\tau_{V_i}: V_i^{\rob} \to V_i$ to their open sets $V_i$, where
    $V_i^{\rob} := \tau_{Y_i}^{-1}(V_i)$.  Then,  the toroidal isomorphism
    $\phi_2 \circ \phi_1^{-1} : V_1 \to V_2$ lifts in a unique way to a homeomorphism
    $(\phi_2 \circ \phi_1^{-1})^{\rob}: V_1^{\rob} \to V_2^{\rob}$ making the following
    diagram commute:
         \[  \xymatrix{
                                V_1^{\rob}
                                  \ar[rr]^{(\phi_2 \circ \phi_1^{-1})^{\rob}}
                                  \ar[d]_{\tau_{V_1}} &
                                                &    V_2^{\rob}   \ar[d]^{\tau_{V_2}} \\
                                  V_1  \ar[rr]_{\phi_2 \circ \phi_1^{-1}}  &  & V_2 \\
                                       &  U  \ar[lu]^{\phi_1}  \ar[ru]_{\phi_2}& }.  \]
\end{proposition}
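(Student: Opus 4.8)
The plan is to reduce the statement to a local question on $V_1$ and to exploit the description of the real oriented blowup of an affine toric variety as $(\tv^{\Gamma})^{\rob} = \Hom(\Gamma, \R_{\geq 0}\times\bS^1)$, combined with the fact that a toroidal isomorphism pulls monomials back to monomials times nonvanishing holomorphic units. Write $g := \phi_2 \circ \phi_1^{-1} : V_1 \to V_2$; since $g$ is an isomorphism of pairs it carries the dense torus part $V_1^{\circ} := V_1 \setminus \partial Y_1$ biholomorphically onto $V_2^{\circ} := V_2 \setminus \partial Y_2$.

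First I would dispose of uniqueness, which simultaneously pins the lift down over the dense open part. Over $V_i^{\circ}$ the morphism $\tau_{V_i}$ restricts to a homeomorphism onto $V_i^{\circ}$ (because $\tau_{\CC}$ is a homeomorphism over $\CC^{*}$, hence $\tau_{\tv^{\Gamma}}$ is one over the dense torus), and $\tau_{V_i}^{-1}(V_i^{\circ})$ is dense in $V_i^{\rob}$. Thus commutativity of the square forces any lift to agree on this dense open set with $\tau_{V_2}^{-1}\circ g \circ \tau_{V_1}$. As $V_2^{\rob}$ is a semi-algebraic space, hence Hausdorff, two continuous lifts agreeing on a dense subset coincide, which yields uniqueness.

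The real content is existence, i.e. extending $\tau_{V_2}^{-1}\circ g \circ \tau_{V_1}$ continuously across the boundary, and this is local near a point $q \in V_1$. Using that $g$ is a toroidal morphism (\autoref{def:toroidalmorph}), after shrinking I may assume $g$ is, in suitable toric coordinates, the restriction of a toric morphism; concretely there is a monoid homomorphism $\mu : \Gamma_2 \to \Gamma_1$ (into the face-localization of $\Gamma_1$ describing a neighbourhood of the orbit through $q$, in the sense of \autoref{prop:chartoraffopen}) with
\[ g^{\ast}\chi^{m} = \chi^{\mu(m)}\cdot v_{m}, \]
where each $v_{m}$ is a nonvanishing holomorphic unit and $v_{m+m'} = v_{m}v_{m'}$. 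A point $x \in V_1^{\rob}$ is a homomorphism $\Gamma_1 \to \R_{\geq 0}\times\bS^1$, and a nonvanishing unit $v$ has a modulus--argument $(|v|, \mathrm{sign}(v))$ that lifts continuously to $V_1^{\rob}$ through $\tau_{V_1}$. I would then set the lift of $x$ to be the homomorphism $\Gamma_2 \to \R_{\geq 0}\times\bS^1$,
\[ m \longmapsto x(\mu(m))\cdot\bigl(|v_{m}|, \mathrm{sign}(v_{m})\bigr). \]
Multiplicativity of $\mu$ and of $m\mapsto v_{m}$ make this a morphism of monoids, hence a point of $V_2^{\rob}$; the formula is manifestly continuous, and a direct check using $\tau_{\CC}(r,u)=r\cdot u$ shows that applying $\tau_{V_2}$ recovers $g\circ\tau_{V_1}(x)$, so this local lift makes the square commute.

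Finally I would glue and invert. On overlaps two such local lifts are continuous lifts of the same $g$ commuting with $\tau$, so they agree on the dense torus part, hence everywhere by the uniqueness argument; the local lifts therefore patch to a global continuous $(\phi_2\circ\phi_1^{-1})^{\rob}$. Running the construction for $g^{-1}$ gives a continuous map in the other direction, and the two composites are continuous lifts of the identities $\mathrm{id}_{V_i}$, hence equal to the identities by uniqueness, so the lift is a homeomorphism. The step I expect to be the main obstacle is establishing this local normal form, that is, that a toroidal isomorphism really sends monomials to monomials-times-units --- equivalently, that the real oriented blowup is intrinsic to the pair $(Y_i, \partial Y_i)$ and invariant under \emph{all} isomorphisms of pairs, not only toric ones. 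In the smooth normal crossings case it is transparent, since $g^{\ast}w_{j} = z_{\sigma(j)}\,v_{j}$ with $v_{j}$ a unit; in general one reads off $\mu(m)$ from the orders of vanishing of $g^{\ast}\chi^{m}$ along the components of $\partial Y_1$ and checks that the remaining factor is a unit, which requires some care with the face-localizations attached to the various orbit strata.
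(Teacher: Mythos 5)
Your proposal takes the direct route that the paper only gestures at (its passing reference to Majima's arguments, given there only for smooth toroidal varieties); the paper's actual proof is instead log-theoretic, invoking \autoref{prop:robisround} to identify each $V_i^{\rob}$ with the rounding of the toroidal log structure and then noting that $\phi_2 \circ \phi_1^{-1}$ preserves the divisorial log structure, which is intrinsic to the pair. Your skeleton is sound: the uniqueness argument (density of $\tau_{V_1}^{-1}(V_1 \setminus \partial Y_1)$ in $V_1^{\rob}$ plus Hausdorffness of $V_2^{\rob}$) is correct and correctly drives your gluing and inversion steps, and your local lift formula is a coordinate translation of the functoriality of rounding. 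Moreover, the key lemma you isolate --- that $g^{*}\chi^{m}$ decomposes locally and multiplicatively as $\chi^{\mu(m)} v_m$ with $v_m$ a unit --- is precisely \autoref{prop:identiflog}, the same fact on which the paper's route rests; your proof unpacks it in coordinates, the paper's packages it into rounding functoriality.

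The genuine gap is that this lemma, and your sketch of it via orders of vanishing, is valid when the toric models $Y_i$ are smooth or normal, but fails for non-normal models, which \autoref{def:toroidal} allows, the paper having deliberately dropped normality. Concretely, let $Y_1 = Y_2 = \tv^{\Gamma}$ with $\Gamma = \N\langle 2,3\rangle$ (the cuspidal cubic, boundary its cusp $0$, local ring $\cO_{Y,0} = \CC\{x^2,x^3\}$), and let $g$ be the boundary-preserving automorphism germ induced by $x \mapsto x(1+x)$; it maps $\cO_{Y,0}$ into itself because $x^2\CC\{x\} \subset \CC\{x^2,x^3\}$, and the identity and $g$ are both legitimate toric charts, so $g$ is a transition map as in the proposition. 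Then $g^{*}\chi^{2} = x^2(1+x)^2$ vanishes only on the boundary, yet it admits no decomposition $\chi^{m'} v$ with $m' \in \Gamma$ and $v \in \cO_{Y,0}^{\star}$: comparing vanishing orders on the normalization forces $m' = 2$, and the remaining factor $(1+x)^2 = 1 + 2x + x^2$ has a linear term, hence does not even lie in $\CC\{x^2,x^3\}$. So the step ``check that the remaining factor is a unit'' genuinely fails, and your lift formula cannot be written on $V_1$ near the cusp (the proposition remains true here --- the lift exists via the normalization --- but your argument does not produce it). The repair is to insert a reduction to the normalization before the local analysis: an isomorphism of pairs lifts canonically to the normalizations, which are open sets in the normal models $\tv^{\overline{\Gamma}_i}$ of \autoref{prop:charnormaltoric}, and the real oriented blowup is insensitive to this change, since restriction of homomorphisms gives a canonical homeomorphism $\Hom(\overline{\Gamma}, \R_{\geq 0} \times \bS^1) \xrightarrow{\sim} \Hom(\Gamma, \R_{\geq 0} \times \bS^1)$ (argument parts factor through $\Gamma^{\gp} = \overline{\Gamma}^{\gp}$, and modulus parts extend uniquely to the saturation because positive reals admit unique positive roots), compatibly with the blowup morphisms. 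After this reduction your argument goes through; note that the paper's own route needs the same care in the non-normal case, since the sources cited for \autoref{prop:identiflog} and \autoref{prop:robisround} treat saturated monoids.
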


\begin{proof}
     One could prove the proposition directly, following the arguments of \cite[Page 37]{M 84},
     where only the case of smooth toroidal varieties was considered. Another method is
     to use \autoref{prop:robisround}, which identifies the real oriented blowup of a complex affine
     toric variety with the rounding of its natural toroidal log structure. This implies the desired
     statements because the analytic isomorphism $\phi_2 \circ \phi_1^{-1}$ induces an
     isomorphism of the associated toroidal log structures.
\end{proof}

The uniqueness statement in \autoref{prop:toroidcompar} ensures that whenever
one has three toric charts $(\phi_i)_{i \in \{1, 2, 3\}}$ as above on an open set $U$
of the toroidal variety, then one has automatically the cocycle relation:
    \[ (\phi_3 \circ \phi_2^{-1})^{\rob} \circ (\phi_2 \circ \phi_1^{-1})^{\rob} = (\phi_3 \circ \phi_1^{-1})^{\rob}. \]
Therefore, in the presence of an atlas $(\phi_j : U_j \to V_j)_{j \in J}$
of $X$ by toric charts, the lifts $(\phi_k \circ \phi_j^{-1})^{\rob}$ of its transition
maps $\phi_k \circ \phi_j^{-1}$ glue in a compatible way. This leads to the following
definition:

\begin{definition}   \label{def:toroidrob}
   Let $X$ be a toroidal variety. Its {\bf real oriented blowup} is obtained by gluing the real
   oriented blowups of the sources of its toric charts using the lifts of their transition maps
   described in \autoref{prop:toroidcompar}.
\end{definition}

 \begin{remark}\label{rem:Kawrob}
         In  \cite[Definition 2.1]{K 02}, Kawamata defined a notion of real oriented blowups for
         {\em quasi-smooth toroidal varieties}, whose toric charts use only
         \emph{simplicial} normal affine toric varieties. His notion is a particular instance of
         \autoref{def:toroidrob}.
    \end{remark}

\medskip
\subsection{A'Campo's definition of a real oriented blowup} $\:$  \label{ssec:acamporobu}
\medskip

In this subsection we explain A'Campo's definition of a real oriented blowup of a complex
manifold along a divisor with simple normal crossings (see \autoref{def:acampodef}).
Introduced by A'Campo \cite{A 75}
in 1975, it was chronologically the first appearance of such operation. It became classical
very quickly, as shown by the fact that it was used without
reference by Persson \cite[Section 2.2]{P 77} and Majima \cite[Section I.3]{M 84}. Those
are the two papers mentioned by Kato and Nakayama as references for real oriented blowups
in their paper \cite{KN 99}, in which they introduced {\em rounding}, their generalization
of real oriented blowup to complex log spaces.
\medskip

Once one has a definition of a real oriented blowup for toroidal varieties (see \autoref{def:toroidrob}),
one may specialize it to normal crossings toroidal varieties (see \autoref{def:nctorvar}).
This leads to a notion of {\em real oriented blowup of a complex manifold along a reduced
divisor with normal crossings}.

The first definition of such an operation seems to have been formulated in 1975 by A'Campo,
for divisors with {\em simple} normal crossings, that is, whose irreducible components are {\em smooth}.
He described it as follows in \cite[Page 238]{A 75} (here  $X_0$
is a divisor with simple normal crossings inside a complex manifold $X$
and $(C_j)_{1 \leq j \leq n}$ are its irreducible components, which are taken as initial
{\em centers} of real oriented blow up):

\begin{quote}
   {\em   For $j =1, \dots , n$ let $ \pi_j=  Z_j \to X$ be the real oriented blowup with center $C_j$.
   Therefore, above $x\in C_j$ lie the real oriented normal directions to $C_j$ at $x$ and $\pi_j$ is a
   diffeomorphism outside $C_j$. Thus $Z_j$ is a differentiable manifold-with-boundary and its
   boundary $\partial Z_j = \pi_j^{-1}(C_j)$  is diffeomorphic to the boundary of a tubular
   neighborhood of $C_j$ in $X$. Let $\pi :Z \to X$ be the fibered product of the various $\pi_j$ above $X$.
   Then $Z$ is a differentiable manifold with corners and $\pi$ is a diffeomorphism outside $X_0$.
   The boundary $\partial Z= \pi^{-1}(X_0)=N$ is a differentiable manifold with corners.
   The restriction of $\pi$ to $\partial Z$ is the map $\rho : N \to X_0$. The manifold $N$
   is homeomorphic to the boundary of every regular neighborhood of $X_0$ in $X$
   and $\rho$ is a retraction.}
\end{quote}

One may extract from this description the following definition:

\begin{definition}   \label{def:acampodef}
     Let $X$ be a complex manifold and $C$ a reduced divisor with simple normal crossings inside $X$.
     Denote by $(C_j)_j$ the irreducible components of $C$.
     The {\bf real oriented blowup}
           $\boxed{\tau^{\rob}_{X, C}} : \boxed{X^{\rob}_C} \to X$
     {\bf of $X$ with center $C$} is the fiber
    product of the real oriented blowups $\tau^{\rob}_{X, C_j} : X^{\rob}_{C_j} \to X$ of $X$
    with centers $C_j$.
\end{definition}

One sees that this construction needs the preliminary definition of a real oriented blowup
in the case of {\em smooth} divisors. Even if A'Campo does not define it formally,
one sees from the rest of his paper
that he describes it in local charts as a pullback of the real oriented blowup $\tau_{\CC}$
by one of the canonical projections $\CC^n_{\underline{z}} \to \CC_{z_j}$  of formula
(\ref{eq:changepolarcomplexmap}), that is, of the passage to polar coordinates in one
complex variable. The fact that one gets the same result as in \autoref{def:robuCn} is
guaranteed by the following statement, whose proof is left to the reader:

\begin{proposition}   \label{prop:altrobtoric}
     For every $j \in \{1, \dots, n\}$, denote by $\pi_j : Z_j \to \CC^n$ the map obtained
     by replacing the $j$-th component of the identity $\CC^n \to \CC^n$ by the real
     oriented blowup $\tau_{\CC}: \R_{\geq 0}  \times \bS^1  \to    \CC$. Then, the fiber
     product of the maps $(\pi_j)_{j \in \{1, \dots, n\}}$ identifies canonically with the
     real oriented blowup $\tau_{\tv^{\N^n}} : (\R_{\geq 0} \times \bS^1)^n \to \CC^n$
     of $\CC^n$ seen as the toric variety $\tv^{\N^n}$.
\end{proposition}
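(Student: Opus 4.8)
The plan is to unwind the definition of the fiber product and check directly that it is the product $(\R_{\geq 0} \times \bS^1)^n$ sitting over $\CC^n$ via $\tau_{\CC^n}$. First I would write the fiber product $Z := Z_1 \times_{\CC^n} \cdots \times_{\CC^n} Z_n$ as the subspace of $\prod_{j=1}^n Z_j$ consisting of those tuples $(\zeta^{(1)}, \dots, \zeta^{(n)})$, with $\zeta^{(j)} \in Z_j$, whose images agree: $\pi_1(\zeta^{(1)}) = \cdots = \pi_n(\zeta^{(n)})$ in $\CC^n$. Each factor $Z_j$ is by construction the product of $n$ spaces, complex in every slot except the $j$-th, which is $\R_{\geq 0} \times \bS^1$; so I would write a point of it as $\zeta^{(j)} = (w^{(j)}_1, \dots, w^{(j)}_{j-1}, (r_j, u_j), w^{(j)}_{j+1}, \dots, w^{(j)}_n)$ with $w^{(j)}_k \in \CC$ for $k \neq j$ and $(r_j, u_j) \in \R_{\geq 0} \times \bS^1$, and recall that $\pi_j$ is the identity in every slot $k \neq j$ and $\tau_{\CC}$ in slot $j$, hence sends $\zeta^{(j)}$ to the point of $\CC^n$ whose $k$-th coordinate is $w^{(j)}_k$ for $k \neq j$ and $r_j u_j$ for $k = j$.

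Then I would impose the agreement condition one coordinate at a time. Fixing $k$, the $k$-th coordinate of $\pi_j(\zeta^{(j)})$ equals $w^{(j)}_k$ for every $j \neq k$ and equals $r_k u_k$ for $j = k$. Equating all of these shows that the common image has $k$-th coordinate $\zeta_k := r_k u_k$ and that $w^{(j)}_k = r_k u_k$ for every $j \neq k$. In other words, all the complex entries are determined by the $n$ polar pairs $((r_1, u_1), \dots, (r_n, u_n))$, which range freely over $(\R_{\geq 0} \times \bS^1)^n$. (Note that this remains valid over the toroidal boundary: when $\zeta_k = 0$ one is forced to have $r_k = 0$ while $u_k \in \bS^1$ stays free, matching the circle fibers of $\tau_{\CC^n}$ over zero coordinates.)

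Consequently, the assignment that forgets the redundant complex entries and retains only the polar pairs is a bijection $Z \xrightarrow{\sim} (\R_{\geq 0} \times \bS^1)^n$; both it and its inverse, which reconstructs $w^{(j)}_k := r_k u_k$, are continuous — indeed semi-algebraic — so it is a homeomorphism. Under this identification the canonical structure map $Z \to \CC^n$ sending a point to its common image becomes $((r_k, u_k))_{k} \mapsto (r_1 u_1, \dots, r_n u_n)$, which is exactly the map $\tau_{\CC^n}$ of \autoref{def:robuCn}; and by \autoref{ex:smoothafftoricrob} this map is none other than $\tau_{\tv^{\N^n}}$. This yields the claimed canonical identification over $\CC^n$.

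The computation presents no real obstacle: it is pure bookkeeping of which coordinate of $\CC^n$ is controlled by which factor, the essential mechanism being that each $\CC^n$-coordinate is ``polar'' in exactly one of the factors $Z_j$ and ``plain complex'' in all the others. The only point deserving genuine, if minimal, attention is to verify that the identification is one over $\CC^n$, that is, that it intertwines the fiber-product structure map with $\tau_{\CC^n}$ rather than being merely an abstract homeomorphism of the two total spaces; this is precisely the content of the final displayed formula for the structure map above.
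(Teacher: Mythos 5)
Your proof is correct: the paper actually leaves this proposition as an exercise ("whose proof is left to the reader"), and your direct unwinding of the fiber product — identifying each tuple with its $n$ polar pairs, checking continuity in both directions, and verifying that the structure map becomes $\tau_{\CC^n}$ — is precisely the intended bookkeeping argument. The attention you give to the compatibility over $\CC^n$ (not just an abstract homeomorphism of total spaces) is exactly the right point to single out, since that is what "identifies canonically with the real oriented blowup $\tau_{\tv^{\N^n}}$" means.
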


\begin{remark}
  Gillam \cite{Gi 11} presented differently the real oriented blowups of complex manifolds
  along simple normal crossing divisors. This alternative perspective is described in a nutshell in
 \cite[Section 8.2]{ACGHOSS 13}.
\end{remark}

\medskip
\section{\bf From polar coordinates to the rounding of complex log spaces}  \label{sec:poltoround}

In this section we present the logarithmic viewpoint on real oriented blowups: the
operation of {\em rounding} of a complex log structure introduced by Kato and Nakayama
in \cite{KN 99}, also called {\em Betti realization} or {\em Kato-Nakayama morphism}.

We give a first example of a {\em complex log structure} by looking at
the passage to polar coordinates in a more intrinsic way than in \autoref{ssec:polcoord}
(see \autoref{ssec:polcoordintr}). Then, we formulate the general definition of a {\em log structure} on a ringed space, giving as a main class of examples the {\em divisorial log structures}
on complex analytic spaces, which are induced by a reduced divisor (see \autoref{ssec:defdivlog}).
A space endowed with a log structure is called a {\em log space}.
We explain what are morphisms between log spaces and how to pull back a log structure
by a morphism of complex spaces (see \autoref{ssec:categlog}).
This pullback operation uses a notion weaker than that of a log structure,  namely, that of a
{\em prelog structure} (see \autoref{ssec:prelogassoc}).
Looking at the divisorial log structure induced by the
algebro-geometric boundary of a toric variety, we explain the notion of a {\em chart} of a log structure,
useful in order to introduce convenient finiteness properties of log structures
(see \autoref{ssec:chartsls}). Then, we introduce the operation of {\em rounding of
complex log structures} (see \autoref{ssec:KNrounding})
and we reinterpret it using a convenient kind of {\em log point} (see \autoref{ssec:reintrounding}).
Passing to the {\em rounding of log morphisms}, we state an important local triviality theorem of Nakayama
and Ogus and we explain an application of it to the study of Milnor fibrations of
smoothings of singularities
(see \autoref{ssec:specaseNO}). Finally, we give a synoptic view of the way rounding may be used in singularity theory (see \autoref{ssec:approundsing}).

Kato's foundational paper \cite{K 88} in logarithmic geometry
develops logarithmic structures in the category
of schemes, following ideas of Fontaine and Illusie (see also~\cite[Definition III.1.1.1]{O 18}).
Logarithmic structures in the complex analytic setting are discussed in~\cite[Section 1]{KN 99},
the paper in which the operation of rounding was first introduced. Nakayama and Ogus'
article \cite{NO 10} studies rounding of morphisms of complex log spaces and proves
the most general local triviality theorem known until now for roundings of special types
of log morphisms. In \cite{AO 20}, Achinger and Ogus study the topology of degenerating families
of complex manifolds using the tools and results of  \cite{NO 10}.
Ogus' book \cite{O 18} is the only existing textbook about log geometry.
Shorter introductions to it may be found in Illusie  \cite{I 94, I 02},
Gross \cite[Chapter 3]{G 11}, Abramovich, Chen,  Gillam,  Huang, Olsson, Satriano and Sun
\cite{ACGHOSS 13}, Arg\"uz \cite{A 20} and Cueto, the author and Stepanov \cite[Section 4]{CPPS 23}.

In order to understand this chapter, the reader should be confortable with the language of sheaves.
If a reader needs a reminder of it, we recommend Eisenbud and Harris \cite[Section I.1.3]{EH 00}
and Tu \cite[Lecture 1]{T 22}. If $\cS$ is a sheaf of sets on a topological space $X$, $U$ is an open
subset of $X$ and $x$ is a point of $X$, we denote by $\boxed{\cS(U)}$ the set of sections of $\cS$
on $U$ and by $\boxed{\cS_x}$ the stalk of $\cS$ at $x$.

\medskip
\subsection{A logarithmic way to make the classical passage to polar coordinates intrinsic} $\:$  \label{ssec:polcoordintr}
\medskip

In this subsection, which follows the presentation of \cite[Section 4.2]{CPPS 23},
we revisit the prototypical real oriented blowup map
$ \tau_{\CC} :    \R_{\geq 0}  \times \bS^1  \to    \CC $ examined in \autoref{ssec:polcoord}
(see \autoref{def:orbu}),
redefining it more intrinsically,  in terms of the local ring $\cO_{\CC, 0}$ of $\CC$ at $0$.
This will allow us to give an intrisic definition of the real oriented blowup of a Riemann surface
at a point, using what will turn out to be our first example of a {\em logarithmic structure}
(see \autoref{def:robRiemsurf}).
\medskip

Let us consider again the classical passage to polar coordinates from \autoref{ssec:polcoord}.
There we saw the set $\CC$ both as a topological space -- the {\em complex plane} --
and as a field, identifying these two viewpoints via the coordinate
function $z : \CC \to \CC$.  Instead of being thought simply as an identity map, $z$ is
to be considered here as a map from a topological space to a field.
The function $z$ was crucial to define
the real oriented blowup function $ \tau_{\CC} :    \R_{\geq 0}  \times \bS^1  \to    \CC $
of formula (\ref{eq:changepolarcomplexmap})
and to see it both as a morphism in the category of topological spaces and in that of monoids.

Now we want to give a more intrinsic, {\em coordinate-free} definition of the map $\tau_{\CC}$,
without using the special function $z$. For this reason, we will see the complex plane $\CC$
as a Riemann surface, endowed with its sheaf $\boxed{\cO_{\CC}}$ of holomorphic functions.
We denote by $ \boxed{\cO_{\CC, 0}}$ its stalk at $0$, that is,
the local ring of germs of holomorphic functions at the origin of $\CC$.

Consider a germ $h \in \cO_{\CC, 0} \setminus \{0\}$. Post-composing it with the function
$\mbox{sign} : \CC^* \to \bS^1$ introduced in \autoref{def:logCoordinatesPolar}, one gets a
germ at $0$ of $\bS^1$-valued function  $\boxed{\mbox{sign} (h)}$ defined in a pointed neighborhood of
$0$ in $\CC$. The germ $h$ may be  written in a unique way as a product
      \[ h=z^m\cdot v\]
 for some $m \in \N$ and $v \in \cO_{\CC, 0}^{\star}$, where $\boxed{\cO_{\CC, 0}^{\star}}$
 denotes the edge of the monoid  $(\cO_{\CC, 0}, \cdot)$ in the sense of \autoref{def:intmon}.
Thus, the following  relation holds in a sufficiently small punctured neighborhood $\D_r \setminus \{0\}$
of $0$ in $\CC$, where $\boxed{\D_r}$ denotes the disc of radius $r > 0$ centered
at the origin of $\CC$:
     \begin{equation}   \label{eq:argh}
          \mbox{sign}(h) = \frac{h}{|  h |} =  \left(\frac{z}{|  z |} \right)^m \cdot   \frac{v}{|  v |} =
        \mbox{sign}(z)^m \cdot \mbox{sign}(v).
    \end{equation}
As a consequence of the fact that the pullback function
    \[ \begin{array}{cccc}
               \tau_{\CC}^*( \mbox{sign}(z)) : &  \R_{> 0} \times \bS^1 &   \to     &   \bS^1  \\
                             & (r, u)  &  \mapsto & u
        \end{array} \]
extends by continuity to the cylinder $\R_{\geq 0} \times \bS^1$, formula  \eqref{eq:argh}
shows that the  lift $\tau_{\CC}^* (\mbox{sign}(h)) : (0, r) \times \bS^1 \to \bS^1$
extends by continuity to $[0, r) \times \bS^1$.
By abusing notation, we denote this extension also by  $\tau_{\CC}^* (\mbox{sign}(h))$:
         \begin{equation*}   \label{eq:liftargholonevar}
                           \xymatrix{
                               [0, r) \times \bS^1
                                  \ar[rrd]^{\:\:\:  \tau_{\CC}^* (\mbox{sign}(h))}
                                  \ar[d]_{\tau_{\CC} }&
                                                & \\
                                  \CC \ar@{-->}[rr]_{\mbox{sign}(h)}  &  & \bS^1. }
           \end{equation*}

If $h_1, h_2 \in \cO_{\CC, 0} \setminus \{0\}$, then on any punctured neighborhood of the origin
on which they are both defined and non-zero, we have:
   \[  \mbox{sign}(h_1) \cdot  \mbox{sign}(h_2) =  \mbox{sign}(h_1 \cdot h_2). \]
 As a consequence, the relation
     \[ \tau_{\CC}^*( \mbox{sign}(h_1)) \cdot \tau_{\CC}^*( \mbox{sign}(h_2)) =
        \tau_{\CC}^*(\mbox{sign}(h_1 \cdot h_2)) \]
 is true over a neighborhood of the topological boundary
 $\bS^1= \partial_{\topo}(\R_{\geq 0} \times \bS^1)$ of the cylinder $\R_{\geq 0} \times \bS^1$.
We get:

 \begin{proposition}\label{pr:monoidsFromPoints}
   Consider a point $P \in \bS^1= \partial_{\topo}(\R_{\geq 0} \times \bS^1)$.
    Then, the map
      \[  \begin{array}{ccc}
              (\cO_{\CC, 0} \setminus \{0\}, \cdot) &   \to   & (\bS^1, \cdot) \\
                  h   &  \mapsto & \tau_{\CC}^* (\mathrm{sign}(h))(P)
           \end{array} \]
     is a \emph{morphism of multiplicative monoids}
     extending the standard morphism of groups $(\cO_{\CC, 0}^{\star}, \cdot)  \to    (\bS^1, \cdot)$
     defined by  $h\mapsto   \mathrm{sign}(h(0))$. Moreover, one gets in this way an isomorphism
     from the group $\bS^1$ to the group of such morphisms.
 \end{proposition}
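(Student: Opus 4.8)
The plan is to first turn the map into a closed formula and then read off all three assertions from the monoid structure of the local ring. Fix a boundary point $P = (0, u_0)$ with $u_0 \in \bS^1$, and write a nonzero germ as $h = z^m v$ with $m \in \N$ and $v \in \cO_{\CC,0}^{\star}$, as in the discussion preceding the statement. Since the pullback $\tau_{\CC}^{*}(\mathrm{sign}(z))$ sends $(r,u)\mapsto u$ and hence $\tau_{\CC}^{*}(\mathrm{sign}(z))(P)=u_0$, while $\tau_{\CC}^{*}(\mathrm{sign}(v))$ extends continuously to the boundary with value $\mathrm{sign}(v(0))$ because $v$ is a unit, formula \eqref{eq:argh} evaluated at $P$ yields
\[ \tau_{\CC}^{*}(\mathrm{sign}(h))(P) = u_0^{m}\cdot \mathrm{sign}(v(0)). \]
I would take this explicit expression as the working description of the map, which I denote $\rho_{u_0}$.

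Next I would dispatch the first two assertions directly. The relation $\tau_{\CC}^{*}(\mathrm{sign}(h_1))\cdot \tau_{\CC}^{*}(\mathrm{sign}(h_2)) = \tau_{\CC}^{*}(\mathrm{sign}(h_1 h_2))$ holds on a neighborhood of the boundary circle, as already noted, so evaluating at $P$ shows that $\rho_{u_0}$ is a morphism of monoids; the neutral germ $1 = z^{0}\cdot 1$ maps to $1$. (Equivalently this is immediate from the closed formula and the multiplicativity $z^{m_1+m_2}(v_1 v_2)$ of the factorization.) For the extension claim, restricting to a unit forces $m = 0$, whence $\rho_{u_0}(v) = \mathrm{sign}(v(0))$ independently of $u_0$; this is exactly the standard group morphism $\cO_{\CC,0}^{\star} \to \bS^1$, which is multiplicative because $\mathrm{sign}$ is.

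The heart of the argument is the isomorphism statement, and here the plan is to use the canonical monoid isomorphism
\[ (\cO_{\CC,0}\setminus\{0\},\cdot)\ \xrightarrow{\ \sim\ }\ (\N,+)\times(\cO_{\CC,0}^{\star},\cdot),\qquad z^{m}v\ \mapsto\ (m,v), \]
which is well defined and bijective precisely because the factorization $h = z^m v$ is unique, and is a homomorphism because $z^{m_1}v_1\cdot z^{m_2}v_2 = z^{m_1+m_2}(v_1 v_2)$. Applying $\Hom(-,\bS^1)$ identifies a monoid morphism $\rho$ to $\bS^1$ with the pair $(\rho(z),\rho|_{\cO_{\CC,0}^{\star}})$ in $\Hom(\N,\bS^1)\times\Hom(\cO_{\CC,0}^{\star},\bS^1)\cong \bS^1\times\Hom(\cO_{\CC,0}^{\star},\bS^1)$. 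Consequently the morphisms extending the standard morphism on units are exactly those whose second coordinate is the standard morphism, and they are parametrized freely by the first coordinate $\rho(z)\in\bS^1$ via $\rho(z^m v)=\rho(z)^m\,\mathrm{sign}(v(0))$. Since $\rho_{u_0}(z)=u_0$, the assignment $u_0\mapsto\rho_{u_0}$ is inverse to evaluation at $z$, hence a bijection onto the set of such extensions.

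The step needing the most care is the exact sense in which this bijection is a group isomorphism. I would stress that the relevant group structure on the set of extensions is \emph{not} the naive pointwise product (under which two extensions would double the fixed unit-part and so fail to extend the standard morphism), but rather the torsor structure under $\Hom(\N,\bS^1)\cong\bS^1$: the pointwise ratio of two extensions is trivial on units and therefore defines an element of $\Hom(\N,\bS^1)$. Transporting the group law of $\bS^1$ through evaluation at $z$ — equivalently, through the $\N$-factor of the displayed splitting — makes $u_0\mapsto\rho_{u_0}$ tautologically a group isomorphism, since $\rho_{u_0}(z)\,\rho_{u_0'}(z)=u_0 u_0'=\rho_{u_0 u_0'}(z)$. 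Everything else is routine bookkeeping with the factorization $h=z^m v$.
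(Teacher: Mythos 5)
Your proposal is correct and follows essentially the same route as the paper: the paper offers no separate proof beyond the preceding discussion (the factorization $h = z^m v$, formula \eqref{eq:argh}, and the multiplicativity of $\mathrm{sign}$ near the boundary circle), which is exactly what you use, supplemented by the splitting $(\cO_{\CC,0}\setminus\{0\},\cdot)\cong(\N,+)\times(\cO_{\CC,0}^{\star},\cdot)$ that the paper leaves implicit when it asserts bijectivity. Your observation that the set of extensions is not closed under the pointwise product, so that the group law on it must be the one transported from $\bS^1$ via evaluation at $z$ (equivalently, the $\Hom(\N,\bS^1)$-torsor structure with base point determined by the chosen coordinate), is a correct and welcome sharpening of the statement's loose phrase ``the group of such morphisms,'' and it anticipates the torsor description of rounding fibers in \autoref{thm:torsorfibre}.
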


 That is, each point of the topological boundary of the real oriented blowup
    $\R_{\geq 0} \times \bS^1$ of $\CC$ at $0$ may be seen as a morphism of a special
    type from the  monoid
    $(\cO_{\CC, 0} \setminus \{0\}, \cdot)$ to  the group $(\bS^1, \cdot)$:
    {\em it extends the standard morphism of groups $(\cO_{\CC, 0}^{\star}, \cdot)  \to    (\bS^1, \cdot)$
    defined by $h\mapsto   \mathrm{sign}(h(0))$.}

    Note also that the monoid $(\cO_{\CC, 0} \setminus \{0\}, \cdot)$ may be seen as the stalk
    at the origin $0$ of the complex plane $\CC$
    of the sheaf of submonoids of the (multiplicative) structure sheaf $(\cO_{\CC}, \cdot)$,
    consisting of the holomorphic functions {\em which do not vanish outside $0$}.
   This interpretation yields the promised intrinsic, {\em coordinate-free},
   extension of the  map $\tau_{\CC}$ to any Riemann surface $S$ and point $s$ of it:

  \begin{definition}   \label{def:robRiemsurf}
       Let $S$ be a Riemann surface and $s$ be a point on it.
       Denote by $\boxed{\cO_S^{\star}(-s)}$ the subsheaf of monoids of $(\cO_S, \cdot)$ consisting
       of the holomorphic functions which do not vanish outside $s$. Define
       the {\bf real oriented blowup of $S$ at $s$} by:
          \[  \boxed{S_s^{\rob}} := \{(x,u), x \in S,    u \in \mathrm{Hom}(\cO_{S,x}^{\star}(-s), \bS^1),
                  u(f) = \mbox{sign}(f(x)) \  \mbox{ for any } f \in \cO_{S,x}^{\star} \} \]
        where the morphisms are taken in the category of monoids.
       Define the associated {\bf real oriented blowup morphism} by:
          \[  \begin{array}{cccc}
                      \boxed{\tau_{S,s}} : &  S_s^{\rob}  &  \to &  S  \\
                         & (x,u)   & \mapsto & x
               \end{array}  .  \]
  \end{definition}

  Note that if the point $x \in S$ is different from $s$, then the stalk $\cO_{S,x}^{\star}(-s)$
  is equal to $\cO_{S,x}^{\star}$. Therefore, there is a unique point in the fiber
  $\tau_{S,s}^{-1}(x)$, the morphism $\mbox{sign} \in \mathrm{Hom}(\cO_{S,x}^{\star}(-s), \bS^1)$.
  This shows that the morphism $\tau_{S,s}$ is, as expected, a bijection above $S \setminus \{s\}$.
  This bijection is in fact a homeomorphism relative to a natural topology on $S_s^{\rob}$ which
  will be introduced in \autoref{def:rounding}.

 \autoref{def:robRiemsurf} uses a special subsheaf of monoids $\cO_S^{\star}(-s)$
  of the structure sheaf $\cO_S$ of the ambient Riemann surface $S$, seen as a
  sheaf of multiplicative monoids.
  This sheaf of monoids has the  property that {\em the inclusion morphism of sheaves
        \[ \cO_S^{\star}(-s)  \to  \cO_S \]
  realizes an isomorphism of the corresponding sheaves of edges}, in the sense of
  \autoref{def:intmon}. This is precisely the defining property of a {\em logarithmic structure}
  in the sense of Fontaine and Illusie.  It is now the time to discuss this notion.

   \medskip
\subsection{The definitions of log spaces and of divisorial log spaces}
 \label{ssec:defdivlog}  $\:$
\medskip

In this subsection we explain what is a {\em log structure} on a ringed space,
turning this space into a {\em log space} (see \autoref{def:logspace}).
We explain that any ringed space has two canonical log structures, an {\em empty} one
and a {\em trivial} one (see \autoref{def:tauttriv}).
We pass then to the complex analytic setting, defining the
{\em divisorial} log structures (see \autoref{def:divlogstruct}), with the special cases of
{\em toroidal} and {\em toric} log structures (see \autoref{def:logtoricgen}).
\medskip

  At the end of \autoref{ssec:polcoordintr} we associated to each couple $(S,s)$ consisting
  of a Riemann surface and a point on it a morphism
  $ \cO_S^{\star}(-s)  \to  \cO_S$ of sheaves of monoids  (the inclusion morphism)
  {\em which realizes an isomorphism of the corresponding sheaves of edges}
  (see \autoref{def:robRiemsurf}).
   Generalizing this property to arbitrary ringed spaces, we get the definitions of a {\em log space}
   as a ringed space endowed with a {\em log structure}:

   \begin{definition}   \label{def:logspace}
        A \textbf{logarithmic space} $\boxed{W}$ or a {\bf log space} for short is a  ringed
        space $(|W|, \cO_W)$,  where $\boxed{|W|}$ is a topological space
        called the \textbf{underlying topological space} of the logarithmic space, endowed with a
        sheaf of monoids $\boxed{\cM_W}$ and an {\bf evaluation morphism} of sheaves of monoids
        \[ \boxed{\alpha_W} \colon \cM_W \to (\cO_{W}, \cdot) \]
        which restricts to an isomorphism between their subsheaves of edges $(\cM_W^{\star}, \cdot)$ and
        $(\cO_W^{\star}, \cdot)$. The pair $(\cM_W, \alpha_W)$ is called a \textbf{logarithmic
        structure} on the ringed space $W,$ or a \textbf{log structure}
        for short. The log space $W$ and its log structure are called \textbf{complex}
        if the structure sheaf $\cO_W$ is a sheaf of complex algebras.
    \end{definition}

   \begin{remark}\label{rem:logNotation}
          Fontaine and Illusie's main motivations for introducing the notion of
           a log space (in the context of schemes) are described in~\cite{I 94}.
           The terminology refers  to the fact that
           a log structure gives rise to a canonical notion of \emph{sheaf of
           differential forms with logarithmic poles}.
           The term ``logarithmic'' hints also to the fact that if the composition law in $\cM_W$
           is thought additively, then  $\alpha_W$ becomes an exponential map turning
           sums into products.
     \end{remark}

     \begin{remark}  \label{rem:dagger}
         We took the notation ``$|W|$'' for the
        underlying topological space of a ringed space from Eisenbud and Harris \cite[Section I.2]{EH 00}.
        Gross used the notation ``$W^{\dagger}$'' for a log space with underlying ringed space $W$
        in his book \cite{G 11} in which he surveyed the Gross-Siebert program
        for studying mirror symmetry with  log geometrical techniques. This convention is used
        in part of the literature. We will use it only for divisorial log spaces (see \autoref{def:divlogstruct}
        and \autoref{def:logtoricgen}), in \autoref{def:logenhancement} and in \autoref{rem:meanstrict}.
        The term {\em evaluation morphism} seems to be new, but
        the notation ``$\alpha_W$'' for it is widely used and
        already appears in Kato's foundational paper \cite{K 88}.
    \end{remark}

     Every ringed space can be endowed with two canonical log structures (see \cite[Page 271]{O 18}):

    \begin{definition} \label{def:tauttriv}
        Let  $(W, \cO_W)$ be a ringed space. Its \textbf{empty log structure}
        is given by the identity  morphism  $\cO_W  \hookrightarrow \cO_W$ and its
        \textbf{trivial log structure} by the embedding $\cO_W^{\star} \hookrightarrow \cO_W$.
    \end{definition}

 \begin{remark}   \label{rem:emptylog}
      Assume that $(W, \cO_W)$ is a complex analytic variety. The construction of divisorial log
      structures in \autoref{def:divlogstruct} below may be performed starting from any closed
      subvariety $D \hookrightarrow W$, instead of a divisor. If one starts from $D := W$, then one
      gets the log structure $\cM_W := \cO_W \hookrightarrow \cO_W$. In this case,
      $W \smallsetminus D = \emptyset$, which motivated Ogus to call this log structure
      {\em empty}, as he kindly informed the author in a message from August 12, 2024.
 \end{remark}

\begin{remark}  \label{rem:logCatFixedV}
   {\em Log  structures on a fixed ringed space $W$ form a category}.
   More precisely,  one may define a morphism $(\cM_W,\alpha_W) \to (\cN_W, \beta_W)$
   as a morphism of sheaves of monoids
   $\varphi \colon \cN_W \to \cM_W$ compatible with the evaluation morphisms
   $\alpha_W$ and $\beta_W$, i.e., satisfying the relation $\beta_W = \alpha_W \circ  \varphi$.
    The empty log structure is then the initial object in this category,
    whereas  the trivial log structure is its final object. Our apparently strange convention
    of making $\phi$ and $\varphi$ go in opposite directions, which is contrary to that
    of \cite[Definition III.1.1.1]{O 18}, is chosen in order to make this definition compatible
    with \autoref{def:morphprelog}
    of morphisms of log spaces. Indeed, according to that definition, a morphism from the log space
    $(W, \cM_W,\alpha_W)$ to the log space $(W, \cN_W,\beta_W)$ is a pair of morphisms
        \[(\phi \colon W \to W,   \:  \phi^{\flat}\colon \phi^{-1} (\cN_W) \to \cM_W),\]
    such that the following diagram is commutative:
        \[
           \xymatrix{
              \phi^{-1} (\cN_W)  \ar[r]^-{\phi^{\flat}} \ar[d]_{\phi^{-1}\beta_W}
                           & \cM_W \ar[d]^{\alpha_W} \\
               \phi^{-1} (\cO_W)   \ar[r]_{\phi^{\sharp}}
                          & \cO_W.}
         \]
    When $\phi$ is the identity morphism of $W$, this means that
    $\phi^{\flat}\colon \cN_W  \to \cM_W$ makes the following diagram commutative:
       \[
           \xymatrix{
              \cN_W  \ar[r]^-{\phi^{\flat}} \ar[d]_{\beta_W}
                           & \cM_W \ar[d]^{\alpha_W} \\
               \cO_W   \ar[r]_{\mathrm{id}}
                          & \cO_W.}
         \]
      That is, $\beta_W = \alpha_W \circ \phi^{\flat}$, which shows that $\phi^{\flat}$ is precisely
      a morphism from the log structure $(\cM_W,\alpha_W)$ to the log structure
      $(\cN_W,\beta_W)$ in the sense explained at the beginning of this remark.
\end{remark}

  In the same way as a point of a Riemann surface $S$ defines a complex log structure on $S$, an arbitrary
  effective divisor on a complex analytic variety $W$ also defines a complex log structure on $W$:

   \begin{definition}  \label{def:divlogstruct}
         Let $D$ be an effective divisor on a complex analytic variety $W$.
         The {\bf divisorial log structure}
                 $ (\boxed{\cO_W^{\star}(-  D)}, \cO_W^{\star}(-  D) \hookrightarrow \cO_W)$
         {\bf defined by $D$} consists of the holomorphic functions which vanish {\em at most} along
         the support $|D|$ of $D$.
         That is, if $U$ is an open subset of $W$, then $\cO_W^{\star}(-D)(U)$
         is the submonoid of the multiplicative monoid $(\cO_W(U), \cdot)$ consisting of the
         holomorphic functions defined on $U$ {\em which do not vanish outside $|D|$}.
         We denote by $\boxed{W_D^{\dagger}}$ the {\bf divisorial log space}
         $(W, \cO_W^{\star}(-  D), \cO_W^{\star}(-  D) \hookrightarrow \cO_W)$ {\bf defined by $D$}.
    \end{definition}

\begin{remark}\label{rem:NotationsDivLogStr}
   The notation ``$\cO_W^{\star}(-  D)$'' is not standard. We chose it in \cite{CPPS 23} by analogy
   with the classical notation
    ``$\cO_W(-D)$'' for the sheaf of ideals of holomorphic functions on $W$
    vanishing {\em at least} along $D$.
    Other notations used in the literature are ``$\cM_{(W \setminus D) | V}$''
    (see \cite[Section III.1.6]{O 18}) and ``$\cM_{(W,D)}$'' (see \cite[Example 3.8]{G 11}
    or \cite[Example 1.6]{A 21}).  Unlike what happens to the sheaf
    $\cO_W(-D)$, one has $\cO_W^{\star}(-  D) = \cO_W^{\star}(-  |D|)$,
    therefore it is enough to consider reduced divisors $D$ in \autoref{def:divlogstruct}.
    Note that by contrast with $\cO_W(-D)(U)$ which is an
    ideal of $\cO_W(U)$, therefore is closed under addition, this is
    never the case for $\cO_W^{\star}(-  D)(U)$. Indeed, for every $f \in \cO_W^{\star}(-  D)(U)$,
    one has also $-f \in \cO_W^{\star}(-  D)(U)$, but
    $0 = f + (-f)$ is never an element of $\cO_W^{\star}(-  D)(U)$.
\end{remark}

\begin{remark}   \label{rem:compactif}
    Divisorial log structures are sometimes called {\em compactifying log structures} (see for instance
    \cite[Section III.1.6]{O 18}). This terminology refers to the fact that the pairs $(W, D)$ with {\em compact}
    $W$ may be seen as {\em compactifications} of $W \setminus D$.
\end{remark}

Toroidal varieties (see~\autoref{def:toroidal}) and in particular toric varieties can be equipped with canonical divisorial log structures as follows:

   \begin{definition}  \label{def:logtoricgen}
        A \textbf{toroidal log structure} is a divisorial log structure of the form
        $\cO_W^{\star}(- \partial W)$, where $(W, \partial W)$ is a toroidal variety.
        A variety endowed with a toroidal log
        structure is called a \textbf{toroidal log variety}. If $W$ is a toroidal variety, we denote by
        $\boxed{W^{\dagger}}$ the associated toroidal log variety.  In particular,
        if $\hat{\cF}$ is a fan of monoids in the sense of \autoref{def:fanmonoids}, we denote by
        $\boxed{\tv_{\hat{\cF}}^{\dagger}}$ the
        toroidal log variety associated to the toric variety $\tv_{\hat{\cF}}$
        (see \autoref{def:toricfromfanmon}).
   \end{definition}

   {\em What is the advantage of looking at a toric or a toroidal variety as a log variety?}
   The main reason is flexibility. For instance, we may {\em restrict} the ambient log structure
   to the toroidal or toric boundary, getting a new log variety, which is in general
   no longer toroidal or toric. Therefore, this operation is impossible
   inside the toric or toroidal category. For instance, if the boundary has a point at which it is
   analytically reducible (which occurs already in the simple example of the affine toric surface
   $\CC^2$), then the boundary is not even a toroidal variety, which is by construction locally
   irreducible.

   \begin{remark}  \label{rem:restruse}
         Assume that $D$ is a {\em smooth} compact divisor inside a complex manifold $M$.
        It defines an associated line bundle $L_D$ endowed with a section whose divisor is $D$.
        If one {\em restricts} $L_D$ to $D$, then one gets a line bundle isomorphic to the normal
        bundle of $D$ in $M$. Therefore, the total space $S_D$ of its associated circle
        bundle is isomorphic to the topological boundaries of the tubular neighborhoods of $D$ in $M$.

       If $D$ is now a {\em singular} simple normal crossings divisor in $M$, then one has again an
       associated line bundle $L_D$, but now the total space $S_D$ of the associated circle bundle
       of its restriction to $D$ is no longer isomorphic to the topological
       boundaries of the tubular neighborhoods of
       $D$ in $M$. Indeed, $S_D$ is now singular above the singular locus of $D$,
       unlike the previous boundary.

       It turns out that one may nevertheless obtain the boundaries  of the tubular neighborhoods of
       $D$ in $M$ from a restriction to $D$ of a structure induced from $D$ on $M$, namely, from
       the associated divisorial log structure in the sense of \autoref{def:divlogstruct}. This is why
       the operation of restriction of log structures is important in the study of real oriented blowups.

       There is a slightly different notion of a log structure associated to a simple normal crossings
       divisor in a complex manifold, called a {\em DF log structure}. The name alludes to
       Deligne and Faltings, who introduced this notion more or less at the same time when
       Fontaine and Illusie imagined their notion of log structure. For details about
       DF log structures, one may consult \cite[Complement 1]{K 88} and \cite[Section III.1.7]{O 18}.
   \end{remark}

   The operation of {\em restriction} of a log structure to a subvariety is a special case of
   the operation of {\em pull back} of a log structure by a {\em morphism of ringed spaces}.
   Subsections \ref{ssec:prelogassoc} and \ref{ssec:categlog} are
   dedicated to this operation. In particular, we will understand why the morphisms $\alpha_W$
   appearing in \autoref{def:logspace} of log structures are not required to be {\em injective},
  by contrast with the empty or trivial log structures of
   \autoref{def:tauttriv} and the divisorial log structures of \autoref{def:divlogstruct},
   therefore also by contrast with the toroidal and toric log structures of \autoref{def:logtoricgen}.
   Indeed, the evaluation morphisms in the sense of
   \autoref{def:logspace} of log structures obtained by pullbacks
   are not necessarily injective (see \autoref{rem:notinj}).

  \medskip
\subsection{Prelog structures and their associated log structures} \label{ssec:prelogassoc}  $\:$
\medskip

 In this subsection, we examine heuristically how to {\em restrict log structures to subspaces}.
 We do this by looking at the example of the divisorial log structure defined
 by the origin of the complex plane $\CC$,
 considered in  \autoref{ssec:polcoordintr} (see \autoref{def:robRiemsurf}). This will lead us
 to the definition of a {\em prelog structure} (see \autoref{def:prelog}) and of
 {\em log structure associated to a
 prelog structure} (see \autoref{def:standardLogStructure}). We will end the section with the
 definitions of {\em log points} and of several particular kinds of log points
 (see Definitions \ref{def:logpoint} and \ref{def:varlogpoints}).
\medskip

Let us consider again our first example of log space, namely, that of a Riemann surface $S$
endowed with the divisorial log structure $\cO_S^{\star}(-s) \hookrightarrow \cO_S$
associated to a point $s \in S$ (see \autoref{def:robRiemsurf}). Our aim is to {\em restrict}
in a suitable sense this log structure to the point $s$. For simplicity, we consider again the special case
$S := \CC$ and $s := 0_{\CC}$ in which we explained the passage
to polar coordinates using monoids (see \autoref{ssec:polcoordintr}).  We denote by
  $\boxed{0_{\CC}}$ the origin of the complex plane $\CC$, stressing that it is
  thought of as a divisor and not as a number. For instance, we have $  - 0_{\CC} \neq 0_{\CC}$.

If we simply restrict the morphism of sheaves of monoids
$\cO_{\CC}^{\star}(-0_{\CC}) \hookrightarrow \cO_{\CC}$
to the point $0_{\CC}$, we get the associated morphism of stalks
   \begin{equation*} \label{eq:stalksRS}
         \boxed{\alpha_{\CC, 0_{\CC}}} :  \cO_{\CC, 0_{\CC}}^{\star}(-0_{\CC})
               \hookrightarrow \cO_{\CC,0_{\CC}}.
    \end{equation*}
  This is not a log structure on the point $0_{\CC}$, because the target $\cO_{\CC,0_{\CC}}$ is not
  the structure sheaf $\cO_{0_{\CC}}$ of $0_{\CC}$. Indeed, the structure sheaf $\cO_{0_{\CC}}$ consists
  of a single $\CC$-algebra,
  that of holomorphic functions on the single non-empty open set $\{0_{\CC}\}$ of $0_{\CC}$. This
  $\CC$-algebra is simply the field $\CC$. One has therefore a canonical surjective morphism of rings
     \begin{equation} \label{eq:rhorestrpoint}
         \boxed{\rho_{0_{\CC}}} :   \cO_{\CC,0_{\CC}}  \to \CC
    \end{equation}
   which restricts germs of holomorphic functions on $\CC$ at $0_{\CC}$
   to the point $0_{\CC}$: it consists
   in evaluating at the point $0_{\CC}$ a germ at $0_{\CC}$ of function defined on a neighborhood
   of $0_{\CC}$ in $\CC$.
   By composing the morphisms $\alpha_{\CC,0_{\CC}}$ and $\rho_{0_{\CC}}$ of \eqref{eq:stalksRS}
   and \eqref{eq:rhorestrpoint},  we get a morphism of multiplicative monoids
      \begin{equation} \label{eq:restrpoint}
         \boxed{\alpha_{0_{\CC}}} := \rho_{0_{\CC}}  \circ \alpha_{\CC,0_{\CC}} :
              \cO_{\CC,0_{\CC}}^{\star}(-0_{\CC})  \to \CC.
    \end{equation}
    This is still not a log structure, as it does not restrict to an isomorphism
   of their edges (or subgroups of invertible elements, see \autoref{def:intmon}). Indeed, the edge of
   the monoid $(\cO_{\CC, 0_{\CC}}^{\star}(- 0_{\CC}), \cdot)$ is
   the group $(\cO_{\CC, 0_{\CC}}^{\star}, \cdot)$ of germs at $0_{\CC}$ of holomorphic functions
   which do not vanish at $0_{\CC}$
   and the edge of $(\CC, \cdot)$ is simply the group $(\CC^*, \cdot)$.

   \medskip
   This phenomenon appears every time one wants to restrict a log structure to a subspace. It
   motivates the introduction of a special name for morphisms of sheaves of monoids which do not
   necessarily identify the subgroups of edges, as asked in \autoref{def:logspace} of log structures:

   \begin{definition}   \label{def:prelog}
        A \textbf{prelogarithmic space} $W$  is a  ringed space $(|W|, \cO_W)$,
        called the {\bf underlying ringed space} of the prelogarithmic space,
        endowed with a sheaf of monoids $\cM_W$ and a morphism of sheaves of monoids
                 \[ \alpha_W \colon \cM_W \to (\cO_W, \cdot). \]
        The pair $(\cM_W, \alpha_W)$ is called a \textbf{prelogarithmic
        structure} on $W$, or \textbf{prelog structure} for short.
        The prelogarithmic space $W$ is called
        \textbf{complex} if the structure sheaf $\cO_W$ is a sheaf of complex algebras.
    \end{definition}

 Note that log structures are particular prelog structures, as sheaves are
 particular presheaves. This explains the use of the prefix ``{\em pre}''.
 We have a natural inclusion functor
\begin{equation}\label{eq:forgetful}
     \boxed{\iota}   \colon \{\text{log structures on } W\} \to \{\text{prelog structures on } W\}
\end{equation}
   from the category of log structures on $W$ to that of prelog structures on $W$. This last category is
   defined analogously to that of log structures on $W$ (see \autoref{rem:logCatFixedV}).

   There is a natural way to transform a prelog structure into a log structure, parallel to the
   passage from a presheaf to a sheaf. A category-theoretic way to describe this transformation
   is to say that {\em the functor  $\iota$ of formula \eqref{eq:forgetful} admits a left adjoint $a$}
   (see \cite[Proposition III.1.1.3]{O 18}). More precisely, given a prelog structure $(\cM_W,\alpha_W)$
   on $W$, its image $ \boxed{\cM_W^a} $ under the functor $a$
   (``$a$'' being the initial of ``{\em associated}'', see \autoref{def:standardLogStructure} below)
   is the pushout of a diagram of sheaves on the topological space $|W|$:
\begin{equation}   \label{eq:pushoutprelog}
\begin{tikzcd}
  \alpha_W^{-1}(\cO^{\star}_W) \arrow[r] \arrow[d, "\alpha_W |" '] &  \cM_W\\
        \cO_W^{\star} &
  \end{tikzcd}
         \leadsto
  \begin{tikzcd}
  \alpha_W^{-1}(\cO^{\star}_W) \arrow[r] \arrow[d,  "\alpha_W |" ']
         \arrow[dr, phantom, "\lrcorner", very near end]
  &  \cM_W   \arrow[d] \\
        \cO_W^{\star}   \arrow[r]    &      \cM_W^a
  \end{tikzcd}
\end{equation}
where $\boxed{\alpha_W^{-1}(\cO_W^{\star})}$ is the inverse image sheaf of $\cO_W^{\star}$
by the morphism $\alpha_W$
and $\boxed{\alpha_W |}$ denotes the restriction
of the morphism $\alpha_W$ to $\alpha_W^{-1}(\cO_W^{\star})$.

The pushout sheaf $\cM_W^a$ is associated to the presheaf which attributes to each open
subset $U$ of $W$ the {\em pushout} of the following diagram of morphisms of monoids:
\[
\begin{tikzcd}
  \alpha_W^{-1}(\cO^{\star}_W)(U) \arrow[r] \arrow[d,  "\alpha_W |_U" '] &  \cM_W(U)\\
        \cO_W^{\star}(U) &
  \end{tikzcd}
\]
The operation of {\em pushout of morphisms of monoids} is an analog of that of {\em amalgamated sum} of morphisms of groups appearing
in the Seifert-Van Kampen theorem. One may find a concrete description of it using congruence
relations on monoids in \cite[Proposition I.1.1.5]{O 18} (see also \cite[Section (1.3)]{K 88}).

Note that the diagram on the left of formula \eqref{eq:pushoutprelog} is part of the
following commutative diagram:
\begin{equation}   \label{eq:partcomm}
  \begin{tikzcd}
  \alpha_W^{-1}(\cO^{\star}_W) \arrow[r] \arrow[d,  "\alpha_W |" ']
  &  \cM_W   \arrow[d,  "\alpha_W" ] \\
        \cO_W^{\star}   \arrow[r]    &      \cO_W
  \end{tikzcd}
\end{equation}

Therefore, by the universal property of the pushout, there is a unique morphism
$\boxed{\alpha_W^a} \colon \cM_W^a\to \cO_W$ of sheaves of monoids
making the following diagram commute:
\begin{equation}   \label{eq:compldiaggen}
\begin{tikzcd}
  \alpha_W^{-1}(\cO^{\star}_W) \arrow[r] \arrow[d,  "\alpha_W |" ']
         \arrow[dr, phantom, "\lrcorner", very near end]
  &  \cM_W   \arrow[d]   \arrow[ddr, "\alpha_W" ]  & \\
        \cO_W^{\star}   \arrow[r]   \arrow[drr] &      \cM_W^a \arrow[dr, "\alpha_W^a" description]     & \\
          &    &  \cO_W
  \end{tikzcd}
\end{equation}
The map $\boxed{\alpha_W^a}$  sends
$(s,t)$ to $\alpha_W(s)t \in \cO_W(U)$ for each $s\in \cM_W(U)$ and $t\in \cO^{\star}_W(U)$, where
$U$ is an open set in $W$.
One may show that the morphism $\alpha_W^a :  \cM_W^a \to  \cO_W$ is a log structure,
which is exactly the image of the prelog structure $\alpha_W :  \cM_W \to  \cO_W$ by
the functor $a$ left adjoint to $\iota$  (see  \cite[(1.3)]{K 88} and
\cite[Proposition III.1.1.3]{O 18} for details).

\begin{definition}\label{def:standardLogStructure}
     We say that the pair $(\cM_W^a,\alpha_W^a)$ of diagram \eqref{eq:compldiaggen}
     is the {\bf log structure on the ringed space $(|W|, \cO_W)$
     associated  to the prelog structure} $(\cM_W, \alpha_W)$.
\end{definition}
\smallskip

Consider for instance the prelog structure
    $\alpha_{0_{\CC}}  :   \cO_{\CC,0_{\CC}}^{\star}(-0_{\CC})  \to \CC$
  of formula \eqref{eq:restrpoint}.  The corresponding diagram \eqref{eq:partcomm} is:
  \begin{equation}  \label{eq:partcommspec}
  \begin{tikzcd}
       \cO^{\star}_{\CC, 0_{\CC}} \arrow[r] \arrow[d,  "\alpha_{0_{\CC}}  |" ']
        &   \cO^{\star}_{\CC, 0_{\CC}}(-0_{\CC})    \arrow[d,  "\alpha_{0_{\CC}}" ] \\
        \CC^*   \arrow[r]    &      \CC
  \end{tikzcd}
             =\joinrel=
      \begin{tikzcd}
        \CC\{z\}^{\star} \arrow[r] \arrow[d,  "\alpha_{0_{\CC}}  |" ']
        &    \CC\{z\} \setminus \{0\}  \arrow[d,  "\alpha_{0_{\CC}}" ] \\
        \CC^*   \arrow[r]    &      \CC
  \end{tikzcd}
\end{equation}
  and the corresponding diagram \eqref{eq:compldiaggen} is:
\begin{equation*}   \label{eq:compldiagpart}
\begin{tikzcd}
     \cO^{\star}_{\CC, 0_{\CC}} \arrow[r] \arrow[d,  "\alpha_{0_{\CC}}  |" ']
         \arrow[dr, phantom, "\lrcorner", very near end]
  &  \cO^{\star}_{\CC, 0_{\CC}}(-0_{\CC})   \arrow[d]   \arrow[ddr, "\alpha_{0_{\CC}}" ]  & \\
        \CC^*   \arrow[r]   \arrow[drr] &      \CC^* \oplus \N \arrow[dr, "\alpha_{0_{\CC}}^a" description]     & \\
          &    &  \CC
  \end{tikzcd}
                             =\joinrel=
     \begin{tikzcd}
           \CC\{z\}^{\star} \arrow[r] \arrow[d,  "\alpha_{0_{\CC}}  |" ']
         \arrow[dr, phantom, "\lrcorner", very near end]
         & \CC\{z\}  \setminus \{0\} \arrow[d]   \arrow[ddr, "\alpha_{0_{\CC}}" ]  & \\
        \CC^*  \arrow[r]   \arrow[drr] &      \CC^* \oplus \N \arrow[dr, "\alpha_{0_{\CC}}^a" description]     & \\
          &    &  \CC
  \end{tikzcd}
\end{equation*}
    Let us describe the three morphisms of monoids which are interior to this diagram, that is,
    which form the complement of the diagram \eqref{eq:partcommspec}:
   \begin{equation}   \label{eq:newarrows}
       \xymatrix{
            &   & \CC\{z\}  \setminus \{0\}  \ar[d]^{z^m v \  \mapsto \  (v(0), m)}   & \\
           \CC^* \ar[rr]_{\lambda \  \mapsto \  (\lambda, 0)}  &  &
               \CC^* \oplus \N \ar[dr]^{(\lambda, m) \  \mapsto  \  \lambda \cdot \delta^m_0}     & \\
          &  &  &  \CC
           }
    \end{equation}
As in \autoref{ssec:polcoordintr}, we write the elements of the monoid $(\CC\{z\} \setminus \{0\}, \cdot)$
in the form $z^m v$, with $m \in \N$ and $v \in \CC\{z\}^{\star}$. By $\delta^m_0$ we denoted
{\em Kronecker's symbol}:
   \[ \boxed{\delta^m_0} := \left\{   \begin{array}{ccc}
                                                        1 & \mbox{if}  & m = 0, \\
                                                        0  & \mbox{if}  & m > 0.
                                           \end{array} \right.   \]
        The diagonal morphism of diagram \eqref{eq:newarrows} is a log structure,
        called the {\em standard complex log point} (see \autoref{def:varlogpoints}). It is by definition
        {\em the restriction to the point $0_{\CC}$
        of the log structure $\alpha_{\CC}: \cO_{\CC}^{\star}(-0_{\CC}) \hookrightarrow \cO_{\CC}$}.

       We arrived at our first example of {\em log point}, in the sense of \cite[Section III.1.5]{O 18}:

       \begin{definition}   \label{def:logpoint}
            Let $K$ be a field. A {\bf $K$-log point} is a log structure on the spectrum of $K$.
            Equivalently, it is a singleton topological space, endowed with a morphism
            of monoids $\Gamma \to (K, \cdot)$ which restricts to an isomorphism
            from the edge $\Gamma^{\star}$ of $\Gamma$ to the edge $K^*$ of $K$.
            If $K = \CC$, we speak about a {\bf complex log point}.
       \end{definition}

       In the sequel we will be interested in the following types of complex log points:

          \begin{definition}   \label{def:varlogpoints}
              One distinguishes the following kinds of complex log points, defined through
              their associated morphisms $\alpha: \Gamma \to (\CC, \cdot)$ of monoids:
                 \begin{itemize}
                      \item  The {\bf empty complex log point} $\boxed{\Pi^{\mathrm{\emptyset}}_{\CC}}$
                            is defined by the identity
                            $(\CC, \cdot) \to (\CC, \cdot)$.
                      \item  The {\bf trivial complex log point}  $\boxed{\Pi^{\mathrm{triv}}_{\CC}}$
                             is defined by the inclusion
                           $(\CC^{\star}, \cdot) \hookrightarrow (\CC, \cdot)$.
                      \item  The {\bf standard complex log point} $\boxed{\Pi^{\mathrm{std}}_{\CC}}$
                           is defined by:
                         \[   \begin{array}{ccc}
                                   (\CC^*, \cdot) \oplus (\N, +) & \to & (\CC, \cdot) \\
                                     (\lambda, m) & \mapsto   & \lambda  \cdot \delta^m_0
                               \end{array}   \]
                      \item  The {\bf polar log point} $\boxed{\Pi^{\mathrm{pol}}_{\CC}}$
                           is defined by the real oriented blowup morphism
                               \[
                                 \begin{array}{cccc}
                                     \tau_{\CC} : &  ( \R_{\geq 0}  \times \bS^1, \cdot) & \to  &  (\CC, \cdot)    \\
                                      &   (r, u)        & \mapsto  & r \cdot u
                                 \end{array}
                               \]
                            of formula \eqref{eq:changepolarcomplexmap}.
                 \end{itemize}
         \end{definition}

       The empty, trivial and standard log points may be defined over the spectrum of any field.
       By contrast, the polar log point is specific to the field $\CC$. For this reason, we do not
       use the attribute {\em complex} when referring to it.

       \begin{remark}  \label{rem:originames}
             The name {\em empty complex log point} is due to the fact that it is the spectrum of the field $\CC$
             endowed with the empty log structure in the sense of \autoref{def:tauttriv}. The name
             {\em polar log point} is taken from Arg\"uz \cite[Section 1.1]{A 21}, whereas the term
             {\em standard log point} is taken from \cite[Example 3.8(2)]{G 11}
             (see also \cite[Example III.1.5.2]{O 18}).
       \end{remark}

       \begin{remark}  \label{rem:newvpolcoord}
            We got a third, {\bf logarithmic aspect} of the real oriented blowup morphism $\tau_{\CC}$,
             in addition to the {\em topological} and {\em algebraic aspects}
             (see \autoref{rem:twoaspectsrobu}):  $\tau_{\CC}$ also describes a complex log structure on a
             point, turning it into the polar log point.
       \end{remark}

       \begin{remark}   \label{rem:notinj}
           Note that the evaluation morphism
           $ \alpha: (\CC^*, \cdot) \oplus (\N, +) \to (\CC, \cdot)$
           defining the standard complex log point $\Pi^{\mathrm{std}}_{\CC}$ is not injective. For instance:
              \[   \alpha^{-1}(0) =  (\CC^*, \cdot) \oplus (\Z_{> 0}, +). \]
           This explains why one does not ask evaluation morphisms to be injective
           when defining log structures: otherwise one could not, for instance, define
           their restrictions to subspaces of the underlying topological space.
       \end{remark}

   \medskip
   \subsection{Morphisms of log spaces and pullbacks of log structures} \label{ssec:categlog} $\:$
   \medskip

  In this subsection we define the operation of {\em pullback of a log structure} by a morphism of ringed
  spaces (see \autoref{def:pb}) and the {\em categories of prelog and log spaces}
  (see \autoref{def:morphprelog} and \autoref{def:morlogspaces}). An important type of
  log morphism is that of {\em log enhancement of a morphism of complex analytic spaces},
  associated to compatible pairs of divisors (see \autoref{def:logenhancement}).
  We conclude by introducing {\em strict morphisms}, which are the morphisms of log spaces
  corresponding to pullbacks (see \autoref{def:strict}).
  \medskip

  In \autoref{ssec:prelogassoc} we saw on the example of the divisorial log structure
  defined by the origin $0_{\CC}$ of the complex plane $\CC$ that the restriction of a
  log structure to a subspace is performed in two steps. The first one produces a
  prelog structure and the second one passes to the associated log structure, in the sense
  of \autoref{def:standardLogStructure}.
  The following definition describes this two step process in full generality, as an
  operation of {\em pullback} by a morphism of ringed spaces (see \cite[Section 1.4]{K 88}
    and \cite[Definition III.1.1.5]{O 18}):

   \begin{definition}   \label{def:pb}
    Let $\phi \colon V \to W$ be a {\bf morphism of ringed spaces}, that is, a pair
        \[(\boxed{|\phi|} \colon |V| \to |W|,   \
           \boxed{\phi^{\sharp}}  \colon |\phi|^{-1} (\cO_W) \to \cO_V) \]
      where $|\phi|$ is a continuous map and $\boxed{\phi^{-1}(\cS_W)}$ denotes the
      {\bf inverse image} of a sheaf $\cS_W$ on $|W|$ by $|\phi|$. This inverse image is the sheaf associated
      to the presheaf $U\mapsto \lim_{U'\supseteq f(U)} \cS_W(U')$ on $V$, where $U'\subset W$
      and $U\subset V$ are open subsets.
     Fix a log structure $(\cM_W,\alpha_W)$  on $W.$
     The \textbf{pullback\index{log!structure!pullback} $\boxed{\phi^* \cM_W}$  of $\cM_W$ by $\phi$}
      is the log structure on $V$ associated to the prelog structure obtained
      as the composition $\phi^{-1}(\cM_W) \xrightarrow{\phi^{-1}(\alpha_W)} \phi^{-1}(\cO_W)
      \xrightarrow{\phi^{\sharp}} \cO_V$.

        If $\phi\colon V\hookrightarrow W$ is a closed immersion of analytic spaces,
        we say that $\phi^*\cM_W$ is  \textbf{the restriction of $\cM_W$ to $V$}.
        We often denote it simply by $\boxed{\cM_{W | V}}$.
   \end{definition}

   There is also a notion of {\em pushforward} or {\em direct image} of log structures
   (see \cite[Section 1.4]{K 88} and \cite[Definition III.1.1.5]{O 18}), which we will not use in this text.

   In order to turn prelog and log spaces into objects of categories, {\em morphisms} between
   such spaces must be appropriately
   defined. We start with {\em morphisms between prelog spaces}, which are defined
   using inverse image sheaves (see \autoref{def:pb}):

  \begin{definition}  \label{def:morphprelog}
        A \textbf{morphism $\phi \colon V \to W$ of prelog spaces}  is a triple
      \[(\boxed{|\phi|} \colon |V| \to |W|,   \
           \boxed{\phi^{\sharp}}  \colon |\phi|^{-1} (\cO_W) \to \cO_V, \
               \boxed{\phi^{\flat}}    \colon |\phi|^{-1} (\cM_W) \to \cM_V),\]
   where $(|\phi|, \phi^{\sharp})$ is a morphism of ringed spaces and
   $\phi^{\flat}$ is a morphism of sheaves of monoids on $V,$ making the
    following diagram commute:
      \begin{equation*}\label{eq:logMorphisms}
           \xymatrix{
              |\phi|^{-1} (\cM_W)  \ar[r]^-{\phi^{\flat}} \ar[d]_{|\phi|^{-1}\alpha_W}
                           & \cM_V \ar[d]^{\alpha_V} \\
               |\phi|^{-1} (\cO_W)   \ar[r]_{\phi^{\sharp}}
                          & \cO_V.}
      \end{equation*}
   \end{definition}

   Next, we turn to the definition of {\em morphisms between log spaces}:

\begin{definition} \label{def:morlogspaces}
   A \textbf{morphism of log spaces}, or \textbf{log morphism} for short,
    is simply a morphism between the underlying prelog spaces.
   That is, the \textbf{category of log spaces}  is the full subcategory of
   the category of prelog spaces whose objects are log spaces.
\end{definition}

\begin{example}\label{ex:trivialStructures}
     If two ringed spaces $V$ and $W$ are endowed with their trivial log structures
   in the sense of~\autoref{def:tauttriv}, then a log morphism between the resulting log
   spaces is determined by the associated morphism of ringed spaces. Therefore, there
   is a functor from the category of ringed spaces to the category of log spaces,
   which at the level of objects sends each ringed space to the same ringed space endowed
   with the trivial log structure.
 \end{example}

Similarly to the way one associates a log structure to a reduced divisor on a complex variety
 (see \autoref{def:divlogstruct}), one may
 associate a log morphism to special kinds of morphisms between complex varieties endowed
 with reduced divisors (see \cite[Definition 4.36]{CPPS 23}):

 \begin{definition}   \label{def:logenhancement}
      Let $\varphi: V \to W$ be a morphism of complex analytic spaces. Let $E \hookrightarrow V$ and
      $D \hookrightarrow W$ be reduced divisors on them. Assume that $|\varphi|^{-1}(D) \subset E$.
      The log morphism $\boxed{\varphi^{\dagger}} : V_E^{\dagger} \to W_D^{\dagger}$ obtained by pulling
      back the sections of the sheaf of monoids $\cO^{\star}_W(-D)$ by the morphism $\varphi$ is
      called the {\bf log enhancement of $\varphi$ associated to the divisors $E$ and $D$}.
   \end{definition}

   Note that $\varphi^{\dagger}$ is well defined because the hypothesis that $|\varphi|^{-1}(D) \subset E$
   ensures that the pullbak $\varphi^*(f)$ of a section $f \in \cO^{\star}_W(-D)(U)$ belongs to
   $\cO^{\star}_V(-E)(|\varphi|^{-1}(U))$. Here,  $U \subset |W|$ denotes an open subset.

Next, we define special morphisms of log spaces, namely, those that can be obtained by
taking pullbacks of log structures in the sense of \autoref{def:pb} (see~\cite[Section III.1.2]{O 18}). They play an important role in the analysis of the structure of {\em roundings},
which are the logarithmic version of
real oriented blowups, as we will see in~\autoref{thm:torsorfibre} \eqref{cartdiag} below.

     \begin{definition}  \label{def:strict}
         A morphism  $\phi \colon V \to W$ of log spaces is called
         {\bf strict} if it establishes an isomorphism $\phi^* \cM_W \simeq \cM_V$.
     \end{definition}

    \begin{remark}   \label{rem:meanstrict}
       Assume that $\phi : V \to W$ is a morphism of ringed spaces and that $\alpha_W : \cM_W \to \cO_W$
       is a log structure on $W$. Then, by the construction of the log structure associated to a prelog
       structure (see \autoref{def:standardLogStructure}), there is a natural morphism
       $\phi^{\flat} : |\phi|^{-1}(\cM_W) \to \phi^{\star}(\cM_W)$ of sheaves of monoids
       making the  following diagram commute:
      \[
           \xymatrix{
              |\phi|^{-1} (\cM_W)  \ar[r]^-{\phi^{\flat}} \ar[d]_{|\phi|^{-1}\alpha_W}
                           & \phi^{\star}(\cM_W) \ar[d]^{|\phi|^{-1} \alpha_W} \\
               |\phi|^{-1} (\cO_W)   \ar[r]_{\phi^{\sharp}}
                          & \cO_V.}
      \]
       The triple $(|\phi|, \phi^{\sharp}, \phi^{\flat})$  defines a morphism
       $\phi^{\dagger} \colon V^{\dagger} \to W^{\dagger}$ between the log spaces
       $V^{\dagger} := (V, \phi^{\star}(\cM_W))$ and $W^{\dagger} := (V, \cM_W)$
       associated to the pullback operation.
       To say that a morphism is {\em strict} means that it is isomorphic in the category of
       log spaces to a morphism of this type.
    \end{remark}

  \medskip
\subsection{Charts of log structures}   \label{ssec:chartsls}  $\:$
\medskip

In this subsection we explain how the natural divisorial log structure on an affine toric variety
of \autoref{def:logtoricgen} may
be defined in a second way. Namely, as the log structure associated to a particular prelog structure
(see \autoref{prop:identiflog}). This
will lead us to the notions of {\em charts} and {\em atlas} of a log space, which are similar to the
homonymous notions from differential geometry (see Definitions \ref{def:chart} and
\ref{def:finelogspace}). Using the notion of atlas, we will define {\em coherent} and
{\em fine log spaces} (see again \autoref{def:finelogspace}).
\medskip

  In \autoref{def:logtoricgen}, we saw that toroidal varieties are endowed with canonical divisorial
  log structures, which we called {\em toroidal}. Now we will see that the toroidal log structure
  of an {\em affine} toric variety may be alternatively described as the log structure
  associated to a prelog structure, in the sense of \autoref{def:standardLogStructure}.

  Let $(\Gamma, +)$ be a toric monoid in the sense of \autoref{def:toricmonoid}.
  Consider the corresponding complex affine
     toric variety $\tv^\Gamma$, in the sense of \autoref{def:afftoric}. The map which
     associates to each element $m$ of $\Gamma$ the monomial $\chi^m$ (see \autoref{def:groupalg})
     is a morphism of monoids:
        \begin{equation} \label{eq:morphmontoric}
            (\Gamma, +) \to (\CC[\Gamma], \cdot).
        \end{equation}
    This map may be {\em sheafified} as follows:
       \begin{itemize}
          \item $\Gamma$ is the monoid of global
              sections of the {\em constant sheaf of monoids $\boxed{\underline{\Gamma}_{\tv^{\Gamma}}}$
              on $\tv^{\Gamma}$ with values in $\Gamma$} (see \cite[Section 3]{T 22}).
              This sheaf is associated to the
              presheaf of monoids whose monoid of sections on an open set $U$ of $\tv^{\Gamma}$ is
              $\Gamma$ and whose restriction morphisms are identities. Therefore, the
              sections of the sheaf $\underline{\Gamma}_{\tv^{\Gamma}}$ on $U$
              are the locally constant functions from $U$ to $\Gamma$. In particular, the monoid
              of sections of the sheaf $\underline{\Gamma}_{\tv^{\Gamma}}$
              on $U$ is again $\Gamma$ if and only if $U$ is connected.
          \item $\CC[\Gamma]$ is the $\CC$-algebra generated by the image of $\Gamma$ inside the ring
            $\cO_{\tv^{\Gamma}}(\tv^{\Gamma})$ of global sections of the sheaf $\cO_{\tv^{\Gamma}}$
            of holomorphic functions on the toric variety $\tv^{\Gamma}$.
          \item the map \eqref{eq:morphmontoric} is induced at the level of global sections by
            the morphism of sheaves of monoids
                  \begin{equation} \label{eq:sheafmorphmontoric}
                       (\underline{\Gamma}_{\tv^{\Gamma}}, +) \to (\cO_{\tv^{\Gamma}}, \cdot).
                  \end{equation}
       \end{itemize}

       According to \autoref{def:prelog}, the morphism \eqref{eq:sheafmorphmontoric} defines a
       prelog structure on the toric variety $\tv^{\Gamma}$. It turns out that its associated log structure
       in the sense of \autoref{def:standardLogStructure}
       is its toroidal  log structure of \autoref{def:logtoricgen}
       (see \cite[Example 3.19]{G 11} or \cite[Corollary III.1.9.5]{O 18}):

       \begin{proposition}   \label{prop:identiflog}
           The log structure on the affine toric variety $\tv^{\Gamma}$ associated to the
           prelog structure \eqref{eq:sheafmorphmontoric} may be canonically identified with the
           divisorial log structure determined by the toric boundary $\partial \tv^{\Gamma}$.
       \end{proposition}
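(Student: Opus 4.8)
The plan is to produce a canonical comparison morphism between the two log structures and then verify that it is an isomorphism stalkwise, by reducing to the induced maps on characteristic monoids. First I would observe that every monomial $\chi^m$, $m \in \Gamma$, is invertible on the dense torus $T = \tv^{\Gamma} \setminus \partial \tv^{\Gamma}$, so its germ never vanishes outside $\partial \tv^{\Gamma}$; hence the chart morphism \eqref{eq:sheafmorphmontoric} factors through a morphism of prelog structures $\underline{\Gamma}_{\tv^{\Gamma}} \to \cO_{\tv^{\Gamma}}^{\star}(-\partial \tv^{\Gamma})$ compatible with the evaluation maps into $\cO_{\tv^{\Gamma}}$. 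Since the divisorial log structure is already a log structure, the universal property of the functor $a$ left adjoint to the inclusion $\iota$ of \eqref{eq:forgetful} (see \autoref{def:standardLogStructure}) yields a unique morphism of log structures $\lambda \colon \cM^a \to \cO_{\tv^{\Gamma}}^{\star}(-\partial \tv^{\Gamma})$, where $\cM^a$ is the log structure associated to \eqref{eq:sheafmorphmontoric}.

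Next, because both log structures restrict to the same sheaf of units $\cO_{\tv^{\Gamma}}^{\star}$, the morphism $\lambda$ is an isomorphism if and only if it induces an isomorphism on the characteristic sheaves $\overline{\cM} := \cM/\cO_{\tv^{\Gamma}}^{\star}$, and this may be tested on stalks. I would compute the source stalk directly: since forming the associated log structure commutes with passing to stalks and with the pushout \eqref{eq:pushoutprelog}, one finds $\cM^a_x = \cO_{\tv^{\Gamma}, x}^{\star} \oplus_{F_x} \Gamma$, where $F_x := \{ m \in \Gamma : \chi^m(x) \neq 0 \}$. This $F_x$ is a face of $\Gamma$ (if $\chi^{m+m'}(x) = \chi^m(x)\chi^{m'}(x) \neq 0$, both factors are nonzero), so the characteristic stalk is the quotient monoid $\overline{\cM^a}_x = \Gamma/F_x$.

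It then remains to show that $\overline{\lambda}_x \colon \Gamma/F_x \to \overline{\cO_{\tv^{\Gamma}}^{\star}(-\partial \tv^{\Gamma})}_x$ is bijective. Both log structures are equivariant for the torus action, and the torus acts transitively on each orbit, so it suffices to treat the distinguished point $x_F$ attached to each face $F$ (the monoid morphism sending $m \mapsto 1$ for $m \in F$ and $m \mapsto 0$ otherwise). Passing to the torus-invariant affine chart $\tv^{\Gamma + F^{\gp}}$ of \autoref{prop:chartoraffopen}, in which $F$ becomes the edge, reduces matters to the most degenerate point, with $F = F_{x_F}$. Injectivity then amounts to the statement that a Laurent monomial $\chi^{m-m'}$ extending to an invertible germ at $x_F$ must have exponent $m - m' \in F^{\gp}$, so that $m$ and $m'$ coincide in $\Gamma/F_x$; this follows from the identification of $\Gamma^{\gp} = M$ with the monomials on the integral variety $\tv^{\Gamma}$.

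The main obstacle is surjectivity of $\overline{\lambda}_{x_F}$: one must show that every germ $h \in \cO_{\tv^{\Gamma}, x_F}$ not vanishing on the torus agrees, up to an invertible germ at $x_F$, with a single monomial $\chi^m$, $m \in \Gamma$. This is the local monomialization of functions invertible on the torus, and it is precisely the step where the analytic-local structure of the toric variety enters; I would derive it from the explicit description of the analytic local ring of $\tv^{\Gamma + F^{\gp}}$ in terms of the monoid, as in Gross \cite[Example 3.19]{G 11} and Ogus \cite[Corollary III.1.9.5]{O 18}. Granting this local fact, $\overline{\lambda}_{x_F}$ is surjective, hence bijective, and assembling the isomorphisms over all faces shows that $\lambda$ is an isomorphism of log structures, giving the asserted canonical identification.
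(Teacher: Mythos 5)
The paper offers no proof of \autoref{prop:identiflog}: it only points to \cite[Example 3.19]{G 11} and \cite[Corollary III.1.9.5]{O 18}, so there is no argument in the paper to compare yours with step by step. Your formal scaffolding is the standard one and is correct as far as it goes: the comparison morphism obtained from the adjunction of \autoref{def:standardLogStructure}, the reduction to characteristic (ghost) sheaves, the stalk computation $\cM^a_x/\cO^{\star}_{\tv^{\Gamma},x} \cong \Gamma/F_x$ with $F_x$ a face, and the equivariant reduction to the distinguished points $x_F$. The gap is that the step you yourself call the main obstacle --- surjectivity, i.e.\ that every germ vanishing only on $\partial \tv^{\Gamma}$ agrees with a monomial up to a unit --- is precisely the entire content of the proposition, and you do not prove it: you ``grant'' it by citing the same two references the paper cites. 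Those references treat the \emph{saturated} case, that is, \emph{normal} affine toric varieties (\autoref{prop:charnormaltoric}); they do not cover the generality of \autoref{def:toricmonoid}, in which the proposition is stated.

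This is not a repairable citation problem, because the local fact you grant --- and with it the proposition as stated --- is false for non-saturated $\Gamma$. Take $\Gamma = \N\langle 2,3\rangle$, as in the paper's own \autoref{ex:normaliz}, so that $\tv^{\Gamma} = \{u^3 = v^2\}$ is the cuspidal cubic, with $\partial \tv^{\Gamma}$ the origin. Via the normalization $t \mapsto (t^2,t^3)$ one has $\cO_{\tv^{\Gamma},0} \cong \CC\{t^2,t^3\}$, the ring of convergent power series with no linear term. The germ $h = u + v$, which pulls back to $t^2+t^3 = t^2(1+t)$, vanishes only at the origin, hence is a section of the divisorial log structure at $0$; but a factorization $h = t^m w$ with $w$ a unit forces $m = 2$ and $w = 1+t$, which does not lie in $\cO_{\tv^{\Gamma},0}$. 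So the class of $h$ in the ghost monoid of the divisorial log structure is not in the image of $\Gamma = \Gamma/F_0$: your morphism $\lambda$ induces on ghost stalks at the cusp a map that is injective but not surjective (the divisorial ghost stalk there is even uncountable, containing the pairwise distinct classes of $t^2 + ct^3$, $c \in \CC$). A related subtlety affects your injectivity step: what must be shown is $m - m' \in F^{\gp}$, whereas arguments using only the identification of $M$ with the group of monomials (for instance, limits along one-parameter subgroups) yield $m - m' \in M \cap \tau_F^{\perp}$, and $F^{\gp}$ can be a proper sublattice of $M \cap \tau_F^{\perp}$ when $\Gamma$ is not saturated (e.g.\ $\Gamma = \N(2,0)+\N(0,1)+\N(1,1)$, $F = \N(2,0)$). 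The provable statement assumes $\Gamma$ saturated; there normality is exactly what makes surjectivity work (near $x_F$ the divisor of $h$ equals the divisor of some monomial $\chi^m$, and $h/\chi^m$ is then a unit because the local ring is normal), and with that hypothesis your scaffolding does complete to a proof.
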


       The maps \eqref{eq:sheafmorphmontoric} are particular cases of {\em charts}
       of log varieties in the following sense
       (see \cite[Example (1.5)]{K 88} and \cite[Proposition III.1.2.4]{O 18}):

      \begin{definition}  \label{def:chart}
         Let $(W, \cM_W,\alpha_W)$ be a log variety and $U$ be an open subset of $W$.
         Let $\Gamma$ be a monoid.
         A {\bf chart of $W$ on $U$ subordinate to the monoid $\Gamma$} is the datum of:
            \begin{itemize}
               \item a prelog structure  of the form $\underline{\Gamma}_U \to \cO_{W| U}$ on $U$;
               \item an identification of the log structure associated to this prelog structure with the restriction
                    of $(\cM_W,\alpha_W)$ to $U$.
            \end{itemize}
      \end{definition}

      \begin{remark}  \label{rem:similardiffg}
          Similarly to what we explained about the passage from the map
          \eqref{eq:morphmontoric} to the map \eqref{eq:sheafmorphmontoric}, giving
          a prelog structure $\underline{\Gamma}_U \to \cO_{W| U}$ is equivalent to giving
          a morphism of monoids $\Gamma \to \cO_{W| U}(U)$. This allows to see that
         the terminology of \autoref{def:chart} is similar to the analogous one
         used in differential geometry, where a chart of a smooth manifold $M$ is usually
         defined as a diffeomorphism $U \to V$ from an open subset $U$ of $M$ to an open subset
         $V$ of $\R^n$.  Such a diffeomorphism is given by a particular smooth map from
         $U$ to $\R^n$, which in turn is equivalent to the datum of $n$ smooth
         functions $(f_j : U \to \R)_{1 \leq j \leq n}$. Finally, to give such functions
         amounts to give a morphism of monoids
            \[  (\N^n, +) \to (\mathcal{C}^{\infty}(U), \cdot),   \]
            which is analogous to the morphism $\Gamma \to \cO_{W| U}(U)$ from the
            beginning of this remark.
      \end{remark}

      The notion of chart of a log space allows one to define {\em atlases} and special types of
      log spaces, satisfying convenient local properties:

  \begin{definition}   \label{def:finelogspace}
         An {\bf atlas} of a log space is a collection of charts on open sets which cover the
         underlying topological space. A log space is called {\bf coherent}
         if it admits an atlas by charts subordinate to finitely generated monoids.
         A log space is called {\bf fine} if it admits an atlas by charts subordinate to finitely
         generated and integral monoids (see \autoref{def:intmon}).
  \end{definition}

  \begin{remark}  \label{rem:meancoh}
      The terminology {\em coherent log space} reminds us of the notion of {\em coherent
      sheaf of modules} on a ringed space (see \cite[Section I.2.1]{EH 00}).
      Similarly to that one, it is a finiteness condition
      which may be formulated using finitely generated algebraic objects.
  \end{remark}

  Every fine log space is obviously coherent.
  The notion of coherent log space will be used in \autoref{prop:opentrivlocus} and that of
  fine log space  in \autoref{thm:torsorfibre}.  An important example of
  fine log spaces is provided by:

  \begin{proposition}   \label{prop:toroidfine}
       Toroidal log varieties are fine.
  \end{proposition}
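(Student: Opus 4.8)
The plan is to produce, around every point, a chart subordinate to a finitely generated integral monoid and then to note that the toric charts of \autoref{def:toroidal} cover the total space. First I would fix a point $x$ of a toroidal variety $(X, \partial X)$ and choose a toric chart $\chi \colon (U, U\cap \partial X) \xrightarrow{\sim} (V, V\cap \partial Y)$ as in \autoref{def:toroidal}, where $V$ is an open subset of an affine toric variety $Y = \tv^{\Gamma}$ carrying its toric boundary $\partial Y$, and where $\Gamma$ is a toric monoid. The candidate chart near $x$ will be obtained by transporting through $\chi$ the canonical monoid chart of $\tv^{\Gamma}$ given by formula \eqref{eq:sheafmorphmontoric}, suitably restricted to $V$.

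Next I would invoke \autoref{prop:identiflog}, which says that the log structure associated to the prelog structure $(\underline{\Gamma}_{\tv^{\Gamma}}, +) \to (\cO_{\tv^{\Gamma}}, \cdot)$ is precisely the toroidal (divisorial) log structure $\cO_Y^{\star}(-\partial Y)$ of \autoref{def:logtoricgen}. By \autoref{def:chart} this already exhibits \eqref{eq:sheafmorphmontoric} as a chart of $Y$ on all of $Y$ subordinate to $\Gamma$. Restricting along the open immersion $V \hookrightarrow Y$ --- a special case of the pullback of \autoref{def:pb}, whose inverse image is here just the restriction to $V$, so that forming the associated log structure commutes with it --- yields a chart of $Y$ on $V$ subordinate to $\Gamma$.

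The step I expect to be the main obstacle is transporting this chart back to $U$, i.e.\ checking that $\chi$ induces an isomorphism between $\cO_X^{\star}(-\partial X)|_U$ and the pullback by $\chi$ of $\cO_Y^{\star}(-\partial Y)|_V$. I would handle it using \autoref{def:logenhancement}: since $\chi$ is an analytic isomorphism carrying $U \cap \partial X$ exactly onto $V \cap \partial Y$, both $\chi$ and $\chi^{-1}$ satisfy the inclusion hypothesis on preimages of the divisors, so each induces a log enhancement; these enhancements are mutually inverse, and therefore $\chi^{\dagger}$ is an isomorphism between the divisorial log spaces over $U$ and over $V$. Intrinsically this is clear from \autoref{def:divlogstruct}: the sheaf $\cO^{\star}(-D)$ is the subsheaf of sections not vanishing outside the support of $D$, a condition preserved by an analytic isomorphism of the pairs. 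Pulling the chart on $V$ back through $\chi$ thus gives a chart of $X$ on $U$ subordinate to $\Gamma$.

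Finally, $\Gamma$ is a toric monoid, hence finitely generated and integral by \autoref{def:toricmonoid}. So every point of $X$ lies in the source $U$ of a chart subordinate to a finitely generated integral monoid, and as these opens cover $X$ they form an atlas of the kind required in \autoref{def:finelogspace}. Consequently the toroidal log variety is fine.
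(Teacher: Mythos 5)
Your proposal is correct and follows essentially the same route as the paper's proof: cover the toroidal variety by toric charts, use the canonical prelog structure \eqref{eq:sheafmorphmontoric} on the affine toric model (via \autoref{prop:identiflog}) as a chart subordinate to a toric monoid, pull it back through the chart, and conclude from the fact that toric monoids are finitely generated and integral. The only difference is that you spell out in detail the step the paper asserts without comment — that the analytic isomorphism of pairs transports the divisorial log structure, which you justify via mutually inverse log enhancements — and this is a welcome elaboration rather than a deviation.
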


  \begin{proof}
      Let $W$ be a toroidal variety. It admits by definition an atlas of toric charts,
      in the sense of \autoref{def:toroidal}. Let $\phi_j : U_j \to V_j$ be a chart of this
      atlas. Therefore, $V_j$ is an open set of an affine toric variety $\tv^{\Gamma_j}$, where
      $\Gamma_j$ is a toric monoid. Consider the canonical prelog structure
      $\underline{\Gamma}_j  \to \cO_{V_j}$ on $V_j$ obtained by restriction of
      the prelog structure \eqref{eq:sheafmorphmontoric}. Its pullback on $U_j$ by the
      morphism $\phi_j$ is a chart of the canonical toroidal log structure on $W$
      in the sense of \autoref{def:logtoricgen}. This chart is
      subordinate to the monoid $\Gamma_j$ in the sense of \autoref{def:chart}.
      Therefore, the atlas $(\phi_j)_j$ of the toroidal variety $W$ gives rise canonically
      to an atlas of the associated toroidal log variety. Since every toric monoid is
      finitely generated and integral, we conclude that toroidal log varieties are fine.
  \end{proof}

  \begin{remark}
      Toroidal log varieties are also {\em log smooth} in the sense of Kato \cite[Paragraph (3.3)]{K 88}.
      This is a consequence of Kato's \cite[Theorem 3.5]{K 88}
      (see also \cite[Theorem 4.1]{K 96}, \cite[Theorem 3.11]{ACGHOSS 13}
      and \cite[Section IV.3]{O 18}). There is a related
      notion of {\em log regularity} (see \cite[Section III.1.11]{O 18} and \cite[Section 3.6]{BN 20}),
      which coincides with log smoothness for complex log spaces.
  \end{remark}

  Recall that the notion of trivial log structure was introduced in \autoref{def:tauttriv}. Every
  log space has a {\em triviality locus} (see \cite[Corollary II.2.1.6, Proposition III.1.2.8]{O 18}):

  \begin{definition}  \label{def:trivlocus}
     Let $W$ be a log space. Its {\bf triviality locus} is the subset $| W^{\mathrm{triv}} |$ of
     $| W|$ consisting of the points $x \in W$ such that $\cM_{W, x} = \cM_{W, x}^{\star}$.
     The {\bf triviality log subspace} $\boxed{W^{\mathrm{triv}}}$ of $W$ is the topological space
     $| W^{\mathrm{triv}} |$ endowed with the restriction of the log structure of $W$.
  \end{definition}

  Coherent log spaces have the following property, whose proof is left to the reader:

  \begin{proposition}  \label{prop:opentrivlocus}
      Let $W$ be a coherent log space in the sense of \autoref{def:finelogspace}. Then, its triviality
      locus is an open subset of $| W|$.
  \end{proposition}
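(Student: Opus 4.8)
The plan is to use that openness is a local property: it suffices to produce, around each point $x$ of the triviality locus $|W^{\mathrm{triv}}|$, an open neighborhood entirely contained in it. Since $W$ is coherent, I would begin by choosing a chart of $W$ on some open neighborhood $U$ of $x$ subordinate to a \emph{finitely generated} monoid $\Gamma$ (see \autoref{def:chart} and \autoref{def:finelogspace}). Concretely, as in \autoref{rem:similardiffg}, this chart amounts to a morphism of monoids $\theta \colon \Gamma \to \cO_W(U)$ — equivalently a prelog structure $\underline{\Gamma}_U \to \cO_{W|U}$ — whose associated log structure in the sense of \autoref{def:standardLogStructure} is identified with the restriction $\cM_{W|U}$.

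Next I would fix a finite generating set $m_1, \dots, m_k$ of $\Gamma$ and consider the sections $f_i := \theta(m_i) \in \cO_W(U)$. The heart of the argument is the claim that a point $y \in U$ lies in the triviality locus if and only if every $f_i$ is a unit in the stalk $\cO_{W,y}$. To prove it I would compute the stalk of the associated log structure at $y$ via the pushout description of diagram \eqref{eq:pushoutprelog}: writing $F_y := \{ m \in \Gamma : \theta(m) \in \cO_{W,y}^{\star} \}$ for the set of generators-and-combinations mapping to units at $y$, one checks that $F_y$ is a face of $\Gamma$ containing $\Gamma^{\star}$, that the stalk is the amalgamated sum $\Gamma \oplus_{F_y} \cO_{W,y}^{\star}$, and hence that the characteristic monoid $\cM_{W,y} / \cM_{W,y}^{\star}$ is canonically isomorphic to the quotient monoid $\Gamma / F_y$. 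Because $F_y$ is a face, the class of an element $m$ vanishes in $\Gamma / F_y$ precisely when $m \in F_y$; therefore $\cM_{W,y} = \cM_{W,y}^{\star}$ holds if and only if $F_y = \Gamma$, which (as $F_y$ is a submonoid) is equivalent to $f_1, \dots, f_k$ all being units in $\cO_{W,y}$.

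Granting the claim, the triviality locus meets $U$ in the set $\{ y \in U : f_i(y) \neq 0 \text{ for all } i \}$, since for a complex analytic variety a germ is a unit in $\cO_{W,y}$ exactly when its value at $y$ is nonzero. This set is the complement in $U$ of the union of the zero loci $Z(f_1), \dots, Z(f_k)$ of finitely many holomorphic functions, each of which is closed; hence it is open in $U$, and therefore open in $|W|$. As $x$ was an arbitrary point of the triviality locus and every point of $W$ admits such a chart, this shows $|W^{\mathrm{triv}}|$ is open.

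The step I expect to be the main obstacle is the identification of the characteristic monoid of the associated log structure with $\Gamma / F_y$: one must verify that the group of units of the pushout $\Gamma \oplus_{F_y} \cO_{W,y}^{\star}$ is generated by the image of $\cO_{W,y}^{\star}$ together with $F_y$, so that passing to the characteristic monoid kills exactly the face $F_y$. This is a purely monoid-theoretic computation, of the kind recorded in \cite[Proposition I.1.1.5 and Section II.2.1]{O 18}; once it is in place, the topological conclusion is immediate.
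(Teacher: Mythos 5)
Your proposal cannot be compared line by line with the paper's argument for a simple reason: the paper gives none. \autoref{prop:opentrivlocus} is stated with the remark that its proof ``is left to the reader'', the relevant background being the references given at \autoref{def:trivlocus} to \cite[Corollary II.2.1.6, Proposition III.1.2.8]{O 18}. Judged on its own, your proof is the standard one and its core reduction is correct: coherence provides, around any point, a chart subordinate to a finitely generated monoid $\Gamma$ with generators $m_1, \dots, m_k$, and triviality of the log structure at a point $y$ of the chart domain is equivalent to all the sections $f_i = \theta(m_i)$ being invertible in $\cO_{W,y}$, so that the triviality locus is locally the common stalkwise-invertibility locus of finitely many sections.

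Two caveats, the first of which is a genuine (though easily repaired) gap. Your openness step argues that ``a germ is a unit in $\cO_{W,y}$ exactly when its value at $y$ is nonzero'' and that the zero loci $Z(f_i)$ are closed; this presumes that $W$ is a complex analytic variety, whereas the proposition (through \autoref{def:finelogspace} and \autoref{def:logspace}) concerns coherent log structures on arbitrary ringed spaces, where ``values'' and ``zero loci'' make no sense. The repair is both shorter and fully general: if $f_{i,y}$ is invertible in $\cO_{W,y}$, choose a germ $g$ with $f_i g = 1$ at $y$; since an equality of germs holds on some neighborhood of $y$, the section $f_i$ is invertible on that whole neighborhood, so the stalkwise-invertibility locus of any section of $\cO_W$ is open in any ringed space, and the intersection of the $k$ such loci is open. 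Second, the step you flag as the main obstacle --- identifying the characteristic monoid of the pushout with $\Gamma / F_y$ --- is indeed true (the congruence generated by the face $F_y$ admits the simple description $a \sim b$ if and only if $a + f = b + f'$ for some $f, f' \in F_y$, so $\Gamma/F_y$ is sharp and a class vanishes exactly when its representative lies in $F_y$), but it can be bypassed entirely. For the forward implication, the chart gives a map $\Gamma \to \cM_{W,y}$ whose composition with $\alpha_{W,y}$ is $\theta_y$ (see diagram \eqref{eq:compldiaggen}); if $\cM_{W,y} = \cM_{W,y}^{\star}$, then $\alpha_{W,y}$ takes values in $\cO_{W,y}^{\star}$, so every $f_i$ is a unit at $y$. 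For the converse, if $F_y = \Gamma$ then the pushout \eqref{eq:pushoutprelog} at the stalk is taken along the identity of $\Gamma$, so $\cM_{W,y} \cong \cO_{W,y}^{\star}$ is a group and the log structure is trivial at $y$. With these adjustments your argument proves the proposition in its stated generality.
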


\medskip
\subsection{Kato and Nakayama's rounding operation}   \label{ssec:KNrounding}  $\:$
\medskip

 In this subsection we define the {\em rounding map} of a complex log space (see \autoref{def:rounding})
 and we discuss its topological and functorial properties (see \autoref{thm:torsorfibre}).  The
 rounding map generalizes the real oriented blowup of toroidal varieties of \autoref{def:toroidrob}
 (see \autoref{prop:topmantoroid}).
 The rounding map is the most general version of a real oriented blowup discussed in this text.
    \medskip

      Recall that the {\em sign function} was introduced in  \autoref{def:logCoordinatesPolar}
      as $\mbox{sign}(z) := z/|z|$ for each $z\in \CC^*$.

    The following definition of the {\em rounding} of a log space is a slight reformulation
    of Kato and Nakayama's generalization of the
    real oriented blowup operation
    given in~\cite[Section 1]{KN 99} for log complex analytic spaces
    (see also~\cite[Definition V.1.2.4]{O 18}).     Alternative descriptions of this operation
    can be found in~\cite[Section 1.2]{IKN 05} and~\cite[Section 1.1]{A 21}.

    \begin{definition} \label{def:rounding}
        Let $W$ be a complex log space in the sense of~\autoref{def:logspace}.
        We identify  the sheaves  $\cM_W^{\star}$ and $\cO_W^{\star}$ via the evaluation morphism
        $\alpha_W$. The \textbf{rounding  of $W$} is the set
                         \[ \boxed{W^{\odot}}:= \left\{ (x,u), x \in |W|,   u \in \Hom(\cM_{W,x}, \bS^1)  ,
                               u(f) = \mbox{sign}(f(x)), \: \forall \: f \in
                                   \cM_{W,x}^{\star}  = \cO_{W,x}^{\star}
                               \right\}. \]
       The \textbf{rounding map} is the function
                        \[ \begin{array}{cccc}
                              \boxed{\tau_W} \colon &  W^{\odot} & \to & |W|  \\
                                & (x,u) & \mapsto & x.
                           \end{array}   \]
     The set $W^{\odot}$ is endowed with the weakest topology making continuous
     the rounding map $\tau_W$ and the elements of the set of functions
            \[ \{\mbox{sign}(m)\in \Hom(\tau_W^{-1}(U), \bS^1):
         U\subset |W| \text{ is open},  \ m\in \cM_W(U)\}, \]
     where:
  \[  \begin{array}{cccc}
                   \boxed{\mbox{sign}(m)}\ \colon &   \tau_W^{-1}(U) & \to & \bS^1  \\
                                & (x,u) & \mapsto & u(m_x).
       \end{array}
   \]
    \end{definition}

     \begin{remark}  \label{rem:terminbettirealiz}
          The terminology ``\emph{rounding}'' was introduced in \cite{NO 10} and
          used also in \cite{ACGHOSS 13}. It refers to the fact that whenever $W$
        is a  \emph{fine} log space in the sense  of~\autoref{def:chart}, the fibers of the rounding
        map $\tau_W$ are finite disjoint unions of compact tori, which are product
        of circles, and thus, prototypical ``round'' geometric objects (see~\autoref{thm:torsorfibre}).
        Alternative names are ``\emph{Kato-Nakayama space}''
        (see \cite{C 04},  \cite{TV 18}, \cite{A 21}) or ``\emph{Betti realization}'',
        a terminology due to Ogus (see~\cite{O 18}).
        We preferred to use the most geometric of the three terminologies. The notation ``$W^{\odot}$''
         is new. We chose it in order to allude to the passage to polar coordinates, which
         is the prototypical rounding, that of the complex plane $\CC$ endowed with
         the divisorial log structure induced by its origin: the symbol ``$\odot$'' is intended to
         recall the fact that in this way the origin of $\CC$ is replaced by a circle.
         In the literature one may also find the
         notations $W^{\log}$ (see \cite{KN 99}, \cite{KN 08}, \cite{ACGHOSS 13}), $W_{\log}$
         (see \cite{NO 10}, \cite{O 18}, \cite{AO 20}), $W^{KN}$
         (see \cite{A 20}, \cite{A 21}).
     \end{remark}

     Recall that a {\em cartesian diagram} of topological spaces denotes a pullback
     or fiber product diagram in  the topological category.
    The next result describes the topology of the rounding of a complex log space,
    as well as functoriality properties of the rounding operation. For a proof when
    $W$ is a complex analytic space endowed with a log structure, we refer to
    \cite[Proposition V.1.2.5]{O 18}. The same proof is valid for arbitrary complex log spaces.

    \begin{theorem}  \label{thm:torsorfibre}
         Assume that $W$ is a complex log space.
             \begin{enumerate}
                   \item The rounding map $\tau_W$ is a
                       homeomorphism whenever the log structure of $W$ is trivial.

                   \item   \label{fibrdescr}
                       Let $x$ be a point of the underlying topological space $|W|$.
                       Consider the abelian group:
                     \begin{equation}\label{eq:Tx}
                       \boxed{T_x}:= \Hom({\cM}_{W,x}/{\cM}_{W,x}^{\star}, \bS^1),
                     \end{equation}
                     where morphisms are considered in the category of monoids.
                     Then, $T_x$ acts naturally on the fiber $\tau_W^{-1}(x)$ by extending the natural
                     action on $\Hom(\cM_{W,x}, \bS^1)$, i.e.:
                     \[ (\beta \cdot u)(m) = \beta(\overline{m}) u(m) \quad \text{ for all }
                             \beta\in T_x, \;u\in \Hom(\cM_{W,x}, \bS^1), \;m\in {\cM}_{W,x},
                     \]
                     where $\overline{m}$ denotes the coset of $m$ in $\cM_{W,x}/\cM_{W,x}^{\star}$.
                     This action
                         defines a torsor if the monoid $\cM_{W,x}$ is unit-integral in the sense
                         of~\autoref{def:intmon}.
                         In particular, $\tau_W$ is surjective if $\cM_W$ has only unit-integral stalks.
                         This occurs, for instance, if $W$ is a fine log space in the sense of
                         \autoref{def:finelogspace}.

                   \item \label{functbr}
                       The construction of the rounding $W^{\odot}$ is functorial and the morphism
                      $\tau_W$ is natural. More precisely, a morphism $\phi \colon V \to W$
                      of complex log spaces induces a morphism of topological spaces
                      $\boxed{\phi^{\odot}} \colon V^{\odot} \to W^{\odot}$,
                      called the \textbf{rounding of $\phi$}, which fits in a commutative diagram:
                          \begin{equation}\label{eq:roundingOfF}
                               \xymatrix{
                                    V^{\odot}     \ar[r]^{\phi^{\odot}}   \ar[d]_{\tau_V} &   W^{\odot} \ar[d]^{\tau_W} \\
                                     |V|  \ar[r]_{|\phi|}                     &   |W|.}
                          \end{equation}
                          Thus, the rounding operation is a \emph{covariant functor} from the category
                          of complex log spaces to the category of topological spaces.

                     \item   \label{cartdiag}
                         The diagram~\eqref{eq:roundingOfF} is cartesian in the topological category
                         whenever the log morphism $\phi$ is strict in the sense of~\autoref{def:strict}.
             \end{enumerate}
    \end{theorem}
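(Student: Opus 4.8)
The plan is to treat the four assertions in order, relying throughout on the single algebraic fact that, since $\bS^1$ is a (multiplicatively written) abelian group, every monoid morphism $\cM_{W,x}\to\bS^1$ factors uniquely through the groupification, so that
\[ \Hom(\cM_{W,x}, \bS^1) = \Hom(\cM_{W,x}^{\gp}, \bS^1), \]
and on the fact that $\bS^1$ is divisible, hence an injective $\Z$-module. For (1), when the log structure is trivial we have $\cM_W=\cO_W^{\star}$, so $\cM_{W,x}=\cM_{W,x}^{\star}$ and the constraint $u(f)=\mbox{sign}(f(x))$ already pins down $u$ on all of $\cM_{W,x}$; each fiber of $\tau_W$ is thus a singleton and $\tau_W$ is a bijection. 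Every generating function $\mbox{sign}(m)$ of the topology of $W^{\odot}$ then factors through $\tau_W$, its value at $(x,u)$ being $\mbox{sign}(m(x))$, so the topology of $W^{\odot}$ is exactly the one transported from $|W|$ by $\tau_W$, making $\tau_W$ a homeomorphism.

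For (2), one checks first that $\beta\cdot u$ lies again in $\tau_W^{-1}(x)$, since $\overline{m}=0$ for $m\in\cM_{W,x}^{\star}$ kills the factor $\beta(\overline{m})$ on the edge. Freeness is automatic: $\beta\cdot u=u$ forces $\beta(\overline{m})=1$ for all $m$, and the quotient map $\cM_{W,x}\to\cM_{W,x}/\cM_{W,x}^{\star}$ is onto, so $\beta=1$. The substance is nonemptiness and transitivity, which is exactly where unit-integrality enters: identifying the fiber with the set of extensions to $\cM_{W,x}^{\gp}$ of the edge character $f\mapsto\mbox{sign}(f(x))$, unit-integrality makes $\cM_{W,x}^{\star}\hookrightarrow\cM_{W,x}^{\gp}$ a subgroup inclusion, and injectivity of $\bS^1$ guarantees both that an extension exists and that all extensions differ by an element of $\Hom(\cM_{W,x}^{\gp}/\cM_{W,x}^{\star},\bS^1)=\Hom(\cM_{W,x}/\cM_{W,x}^{\star},\bS^1)=T_x$. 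Surjectivity of $\tau_W$ for fine $W$ follows, since integral (hence unit-integral) stalks make every fiber nonempty.

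For (3), given a log morphism $\phi$ with stalk data $\phi^{\flat}_y\colon\cM_{W,|\phi|(y)}\to\cM_{V,y}$, I set $\phi^{\odot}(y,v):=(|\phi|(y),\,v\circ\phi^{\flat}_y)$. The commuting square defining a log morphism shows $v\circ\phi^{\flat}_y$ respects the edge constraint: for $f\in\cO_{W,x}^{\star}$ one has $\alpha_{V,y}(\phi^{\flat}_y f)=\phi^{\sharp}_y(f)$, whose value at $y$ is $f(x)$, so $v(\phi^{\flat}_y f)=\mbox{sign}(f(x))$; thus the image lands in $W^{\odot}$ and the square \eqref{eq:roundingOfF} commutes by construction. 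Continuity follows from the universal property of the topology on $W^{\odot}$: it suffices that $\tau_W\circ\phi^{\odot}=|\phi|\circ\tau_V$ and each $\mbox{sign}(m)\circ\phi^{\odot}=\mbox{sign}(\phi^{\flat}m)$ be continuous, which they are, while functoriality is immediate from $\psi^{\flat}\circ\phi^{\flat}=(\phi\circ\psi)^{\flat}$ on stalks.

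For (4), I would compare $V^{\odot}$ with $|V|\times_{|W|}W^{\odot}$ via the canonical map $(y,v)\mapsto(y,\phi^{\odot}(y,v))$. Strictness gives $\cM_{V,y}\cong(\phi^{*}\cM_W)_y$, and the stalk of a pullback log structure is, following \eqref{eq:pushoutprelog}, the pushout of $\cO_{V,y}^{\star}\leftarrow\alpha^{-1}(\cO_{V,y}^{\star})\to\cM_{W,x}$. A character of this pushout valued in $\bS^1$ is precisely a pair consisting of a character of $\cM_{W,x}$ and the fixed edge character $f\mapsto\mbox{sign}(f(y))$ on $\cO_{V,y}^{\star}$ that agree over the amalgamated units, and the compatibility $\mbox{sign}(\phi^{\sharp}f(y))=\mbox{sign}(f(x))$ is exactly what makes them agree. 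Hence for each $y$ and each $u$ in the fiber over $x=|\phi|(y)$ there is a unique $v\in\tau_V^{-1}(y)$ with $v\circ\phi^{\flat}_y=u$, so the map is a bijection. I expect the genuinely delicate step to be the topological one: one must verify the bijection is a homeomorphism by showing that every generating section $\mbox{sign}(m)$ of the topology of $V^{\odot}$ decomposes, through the pushout presentation of $\cM_V$, into pulled-back sections $\mbox{sign}(\phi^{\flat}n)$ coming from $W^{\odot}$ and sections of units coming from $|V|$, which are precisely the generators of the fiber-product topology. Establishing this local decomposition compatibly across charts is where the main work lies.
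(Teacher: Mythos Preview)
The paper does not supply its own proof of this theorem; it refers to \cite[Proposition V.1.2.5]{O 18} and remarks that the same argument works for arbitrary complex log spaces. Your sketch reproduces essentially that standard argument, and parts (1)--(3) are correctly argued.

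For (4), your bijection argument via the pushout description of $(\phi^*\cM_W)_y$ is the right idea, but the compatibility step is slightly underspecified. You need the two characters---$u$ on $\cM_{W,x}$ and $g\mapsto\mbox{sign}(g(y))$ on $\cO_{V,y}^{\star}$---to agree on the amalgamated submonoid $\beta^{-1}(\cO_{V,y}^{\star})\subset\cM_{W,x}$, where $\beta=\phi^{\sharp}_y\circ\alpha_{W,x}$. The edge constraint on $u$ only controls $u$ on $\cM_{W,x}^{\star}$, so one must check that $\beta^{-1}(\cO_{V,y}^{\star})=\cM_{W,x}^{\star}$. This holds because $\phi^{\sharp}_y$ is a local homomorphism of local $\CC$-algebras (so $(\phi^{\sharp}_y)^{-1}(\cO_{V,y}^{\star})=\cO_{W,x}^{\star}$, since a germ is a unit iff its value at the point is nonzero), combined with $\alpha_{W,x}^{-1}(\cO_{W,x}^{\star})=\cM_{W,x}^{\star}$. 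The latter equality is automatic under Ogus's definition of a log structure (requiring $\alpha^{-1}(\cO^{\star})\xrightarrow{\sim}\cO^{\star}$), which is formally a bit stronger than the paper's \autoref{def:logspace}; the two notions coincide for all fine, divisorial, or otherwise reasonable log structures, so this is only a mismatch of conventions. With that point made explicit your set-theoretic bijection is established, and your identification of the topological verification as the remaining delicate step is accurate.
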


 \begin{remark}   \label{rem:ConnectedFibers}
   Whenever $W$  is a fine log space in the sense  of~\autoref{def:chart}, the  monoid
   ${\cM}_{W,x}/{\cM}_{W,x}^{\star}$ appearing in~\autoref{thm:torsorfibre}  (\ref{fibrdescr}) is fine.
   Consequently, the group $({\cM}_{W,x}/{\cM}_{W,x}^{\star})^{\gp}$
    generated by it in the sense of \autoref{def:intmon}
    is finitely generated. It is thus a direct sum of a finite abelian group and of a lattice.
    Therefore,  the group $T_x$ from~\eqref{eq:Tx}
    is a finite disjoint union of compact tori (that is, of groups isomorphic to $(\bS^1)^n$ for some $n \in \N$).
    As a consequence of~\autoref{thm:torsorfibre}, the fiber $\tau_W^{-1}(x)$
    is connected (that is, it is a single torus) if, and only if,
    the group $({\cM}_{W,x}/{\cM}_{W,x}^{\star})^{\gp}$ is a lattice. This is always the case
    when $(W,\cM_W)$ is a toroidal log space
    in the sense  of~\autoref{def:chart} (see \cite[Proposition II.2.3.7]{O 18}).
  \end{remark}

 \begin{remark}  \label{rem:ghostsheaf}
     The quotient sheaf of monoids ${\cM}_{W}/{\cM}_{W}^{\star}$ whose stalks
     ${\cM}_{W,x}/{\cM}_{W,x}^{\star}$ appear in \autoref{thm:torsorfibre} is called
     the {\em ghost sheaf} of the log structure by Gross \cite[Page 101]{G 11},
     and denoted there $\overline{\cM}_W$.
     The notation $\overline{\cM}_W$ is also used in Ogus' textbook \cite{O 18}.
 \end{remark}

\autoref{thm:torsorfibre}~\eqref{cartdiag}  has an important consequence:

    \begin{corollary}    \label{cor:restrround}
       Let $W$ be a complex log space and let $|V| \hookrightarrow  |W|$
       be a subspace of the underlying topological space.
        Endow $|V|$ with the log structure obtained by restricting the log structure of $W$.
        Then, the associated rounding map $\tau_V$ is the restriction to $|V|$ of
        the rounding map $\tau_W$ of $W$.
    \end{corollary}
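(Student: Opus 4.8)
The plan is to realise $\tau_V$ as the base change of $\tau_W$ along the subspace inclusion $|V| \hookrightarrow |W|$, deducing the base-change property from the cartesian square of \autoref{thm:torsorfibre}~\eqref{cartdiag}.

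First I would promote the subspace inclusion to a strict log morphism. Endow $|V|$ with the structure sheaf obtained by restricting $\cO_W$, so that the inclusion $|V| \hookrightarrow |W|$ is the underlying continuous map $|\phi|$ of a morphism of ringed spaces $\phi \colon V \to W$. By \autoref{def:pb}, restricting the log structure of $W$ to $|V|$ amounts precisely to taking the pullback $\phi^{*}\cM_W$, which is by definition the log structure $\cM_V$ with which we equip $V$. Hence $\phi^{*}\cM_W \simeq \cM_V$, so that $\phi$ is strict in the sense of \autoref{def:strict}. This step is essentially tautological, but it records the crucial observation: the restriction of a log structure is nothing but a pullback along a strict morphism.

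Next I would invoke \autoref{thm:torsorfibre}~\eqref{cartdiag}. Since $\phi$ is strict, the square \eqref{eq:roundingOfF} attached to $\phi$ is cartesian in the topological category, so that $V^{\odot}$ is homeomorphic to the fibre product $|V| \times_{|W|} W^{\odot}$, with $\tau_V$ and $\phi^{\odot}$ becoming the two projections.

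Finally I would read off the conclusion from the shape of a fibre product over a subspace inclusion. As $|\phi|$ is an inclusion, the fibre product $|V| \times_{|W|} W^{\odot}$ is canonically homeomorphic to the preimage $\tau_W^{-1}(|V|)$ with its subspace topology, the comparison being given by $w \mapsto (\tau_W(w), w)$ with inverse the second projection. Under this identification the projection to $|V|$, namely $\tau_V$, is precisely the restriction $\tau_W|_{\tau_W^{-1}(|V|)}$, which is the desired statement. The only point needing a moment's care is that this comparison map is a homeomorphism rather than merely a continuous bijection; this holds because its inverse is continuous for the subspace topology on $\tau_W^{-1}(|V|)$. I do not anticipate any genuine obstacle, since the entire substantive content is already carried by the cartesianness asserted in \autoref{thm:torsorfibre}~\eqref{cartdiag}.
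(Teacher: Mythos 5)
Your proposal is correct and takes essentially the same approach as the paper, which introduces this corollary precisely as a direct consequence of the cartesian-square property of \autoref{thm:torsorfibre}~\eqref{cartdiag}: the restriction of a log structure is by \autoref{def:pb} a pullback, so the inclusion is strict in the sense of \autoref{def:strict}, and the rounding diagram is then a topological fibre product. The details you supply — in particular that a fibre product along a subspace inclusion is canonically the preimage $\tau_W^{-1}(|V|)$ with its subspace topology — are exactly what the paper leaves implicit.
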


   The roundings of the natural toroidal log structures on complex affine toric varieties
    in the sense of~\autoref{def:logtoricgen} are the same as their real oriented blowups
    in the sense of \autoref{def:toroidrob} (see \cite[Proposition A.1]{KN 08} and \cite[Proposition 2.4]{A 21}):

     \begin{proposition}   \label{prop:robisround}
         Let $X$ be a complex  affine  toric variety. Consider its real oriented blowup
         morphism $\tau_{X} : X^{\rob}  \to  X$  in the sense of   \autoref{def:robuafftoric}.
         Let $X^{\dagger}$ be the complex log variety
         obtained by endowing $X$ with its canonical toroidal structure in the sense of
         \autoref{def:logtoricgen}. Consider its rounding map
         $\tau_{X^{\dagger}} : (X^{\dagger})^{\odot}  \to  X$ in the sense of \autoref{def:rounding}.
         Then there exists a canonical homeomorphism
         $\boxed{I_X} : X^{\rob} \to (X^{\dagger})^{\odot}$  making the following diagram commutative:
           \[
           \xymatrix{
              X^{\rob}  \ar[r]^-{I_X} \ar[d]_{\tau_X}
                           & (X^{\dagger})^{\odot} \ar[d]^{\tau_{X^{\dagger}}} \\
               X   \ar[r]_{\mathrm{id}}
                          & X.}
         \]
     \end{proposition}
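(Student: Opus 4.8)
The plan is to construct the homeomorphism $I_X$ by hand, using that, by \autoref{prop:identiflog}, the toroidal log structure on $X = \tv^{\Gamma}$ is the one associated to the prelog structure $\underline{\Gamma}_{\tv^{\Gamma}} \to \cO_{\tv^{\Gamma}}$ sending $m$ to the monomial $\chi^m$. An element of $X^{\rob}$ is a monoid morphism $x \colon \Gamma \to \R_{\geq 0} \times \bS^1$, and I would split it through the projection $\mathrm{pr}_2 \colon \R_{\geq 0} \times \bS^1 \to \bS^1$, writing $\mathrm{pr}_2 \circ x$ for its ``sign part''. First I would define $I_X(x) := (p, u)$, where $p := \tau_{\CC} \circ x \in \Hom(\Gamma, \CC) = |X|$ is the image under the real oriented blowup morphism, and $u \colon \cM_{X,p} \to \bS^1$ is the monoid morphism obtained from the universal property of the pushout describing the stalk $\cM_{X,p}$ (see \autoref{def:standardLogStructure}): on a unit germ $v \in \cO_{X,p}^{\star}$ set $u(v) := \mbox{sign}(v(p))$, and on a monomial germ set $u(\chi^m) := \mathrm{pr}_2(x(m))$.

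The first thing to check is that $u$ is well defined, that is, that the two prescriptions agree on those $m \in \Gamma$ for which $\chi^m$ is already a unit at $p$. This is a direct computation: if $x(m) = (r, w)$ with $r > 0$, then $\chi^m(p) = (\tau_{\CC} \circ x)(m) = r \cdot w$, so $\mbox{sign}(\chi^m(p)) = w = \mathrm{pr}_2(x(m))$ because $\mbox{sign}(r) = 1$. By construction $u$ restricts to $\mbox{sign}$ on $\cM_{X,p}^{\star} = \cO_{X,p}^{\star}$, so $(p,u) \in (X^{\dagger})^{\odot}$, and $\tau_{X^{\dagger}}(I_X(x)) = p = \tau_X(x)$, which gives the commutativity of the square with the identity on $X$. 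The inverse is equally explicit: given $(p, u) \in (X^{\dagger})^{\odot}$, I would set $x(m) := (|\chi^m(p)|, u(\chi^m)) \in \R_{\geq 0} \times \bS^1$; this is a monoid morphism because $|\cdot|$, $\chi$ and $u$ are all multiplicative, and $\tau_{\CC} \circ x = p$ follows by distinguishing the cases $\chi^m(p) = 0$ and $\chi^m(p) \neq 0$ (in the latter $u(\chi^m) = \mbox{sign}(\chi^m(p))$ automatically, since $\chi^m$ is then a unit). Checking that the two constructions are mutually inverse is routine.

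The main obstacle is the topological statement: $I_X$ must be a homeomorphism for the weak topology of \autoref{def:rounding} on $(X^{\dagger})^{\odot}$ and the semi-algebraic topology on $X^{\rob} = \Hom(\Gamma, \R_{\geq 0} \times \bS^1)$. Here my plan is to show that, under $I_X$, both topologies are generated by the same family of functions. The topology on $X^{\rob}$ is the weakest making $\tau_X$ and the sign-parts $x \mapsto \mathrm{pr}_2(x(\gamma))$ continuous for $\gamma$ ranging over a finite generating set of $\Gamma$ (the radial parts are recovered as $|\chi^{\gamma} \circ \tau_X|$). On the rounding side $\tau_{X^{\dagger}} \circ I_X = \tau_X$, while $\mbox{sign}(\chi^{\gamma}) \circ I_X(x) = u(\chi^{\gamma}) = \mathrm{pr}_2(x(\gamma))$, so these generators correspond exactly. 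What remains is to see that the sign functions $\mbox{sign}(m)$ attached to \emph{arbitrary} local sections $m \in \cM_X(U)$ do not enlarge the rounding topology beyond the one generated by the monomial sign functions and $\tau_{X^{\dagger}}$. For this I would use the local factorization $m = v \cdot \chi^{\gamma}$ with $v$ an invertible germ and $\gamma \in \Gamma$ (which is exactly the chart description of \autoref{prop:identiflog}), yielding locally $\mbox{sign}(m) = (\mbox{sign}(v) \circ \tau_{X^{\dagger}}) \cdot \mbox{sign}(\chi^{\gamma})$; since $v$ is non-vanishing, $\mbox{sign}(v)$ is continuous on the base, so $\mbox{sign}(m)$ is already continuous for the coarser topology. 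This shows $I_X$ and $I_X^{-1}$ are both continuous and completes the proof. Alternatively, one may simply invoke the identifications of \cite[Proposition A.1]{KN 08} and \cite[Proposition 2.4]{A 21}.
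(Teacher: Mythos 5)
Your proof is correct, and it takes a genuinely different route from the paper: the paper offers no argument for \autoref{prop:robisround} at all, but simply points to \cite[Proposition A.1]{KN 08} and \cite[Proposition 2.4]{A 21} --- so your closing sentence is, in effect, the paper's entire proof, and everything before it is new material. What you add is a construction carried out entirely with the paper's own ingredients: \autoref{prop:identiflog} to present the toroidal log structure as the one associated to the prelog structure $\underline{\Gamma}_{\tv^{\Gamma}}\to\cO_{\tv^{\Gamma}}$, the pushout description of its stalks (\autoref{def:standardLogStructure}) to define $u$ by its values on the two generating pieces $\cO_{X,p}^{\star}$ and $\Gamma$, and the compatibility check $\mbox{sign}(\chi^m(p))=\mathrm{pr}_2(x(m))$ on the face $\{m\in\Gamma:\chi^m(p)\neq 0\}$, which is exactly the overlap along which the pushout is amalgamated. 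The two places where a gap could have occurred are both handled: the well-definedness of $u$ (your computation with $x(m)=(r,w)$, $r>0$), and the comparison of topologies, where the local factorization $m=v\cdot\chi^{\gamma}$ yielding $\mbox{sign}(m)=(\mbox{sign}(v)\circ\tau_{X^{\dagger}})\cdot\mbox{sign}(\chi^{\gamma})$ shows that the sign functions of arbitrary local sections of $\cM_X$ generate nothing beyond what $\tau_{X^{\dagger}}$ and the global monomial signs already give, while the identity $\mathrm{pr}_1(x(\gamma))=|\chi^{\gamma}(\tau_X(x))|$ recovers the radial coordinates on the semi-algebraic side. As for what each approach buys: the paper's citation is economical and rests on references proving more general statements, whereas your argument makes the proposition self-contained within the paper's framework and makes the canonicity of $I_X$ visible --- no choices enter its definition, and the generators of $\Gamma$ intervene only in the topological comparison, whose outcome is independent of them.
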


    Using toric charts, \autoref{prop:robisround} may be used to prove that
    the roundings of toroidal log varieties
    in the sense of~\autoref{def:logtoricgen} are the same as their real oriented blowups
    in the sense of \autoref{def:toroidrob}:

    \begin{proposition}  \label{prop:topmantoroid}
        Let $W$ be a toroidal log variety in the sense of \autoref{def:logtoricgen}.
        Then its rounding map is
        isomorphic to the real oriented blowup of the underlying toroidal variety.
        The topological space $W^{\odot}$ has a natural structure of
        real semi-analytic variety homeomorphic to a topological manifold-with-boundary.
        Its topological boundary $\partial_{top} (W^{\odot})$ is the preimage
        of the toroidal boundary $\partial W$ of $W$ under the rounding map $\tau_W$.
    \end{proposition}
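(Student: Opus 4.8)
The plan is to reduce the global statement about the toroidal log variety $W$ to the already-established local statement about affine toric varieties, using the atlas of toric charts guaranteed by \autoref{def:toroidal} together with the compatibility of rounding with restrictions. First I would fix an atlas $(\phi_j : U_j \to V_j)_{j \in J}$ of $W$ by toric charts, where each $V_j$ is an open subset of an affine toric variety $\tv^{\Gamma_j}$ carrying its canonical toroidal log structure. The key local input is \autoref{prop:robisround}, which produces for each $j$ a canonical homeomorphism $I_{\tv^{\Gamma_j}} : (\tv^{\Gamma_j})^{\rob} \to ((\tv^{\Gamma_j})^{\dagger})^{\odot}$ commuting with the two projections to $\tv^{\Gamma_j}$. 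Restricting to the open set $V_j$ and using that rounding is compatible with passing to open subspaces (a special case of \autoref{cor:restrround}, since an open immersion is strict), I obtain canonical homeomorphisms between the real oriented blowup $V_j^{\rob}$ and the rounding $(V_j^{\dagger})^{\odot}$, again commuting with the projections to $V_j$.

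Next I would verify that these local identifications are compatible on overlaps, so that they glue to a global homeomorphism. On the toroidal side the gluing is exactly the content of \autoref{prop:toroidcompar}: the transition maps $\phi_k \circ \phi_j^{-1}$ lift uniquely to homeomorphisms of the real oriented blowups, and by construction (\autoref{def:toroidrob}) $W^{\rob}$ is defined as the result of this gluing. On the log side, the rounding is a functor (\autoref{thm:torsorfibre} \eqref{functbr}), so the isomorphisms of toroidal log structures induced by the analytic transition isomorphisms $\phi_k \circ \phi_j^{-1}$ produce matching homeomorphisms of the roundings. The proof of \autoref{prop:toroidcompar} already observes that the real oriented blowup transition map agrees with the one coming from the log structure; combining this with the local homeomorphisms $I_{V_j}$ shows that the cocycle data on the two sides correspond under the $I_{V_j}$. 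The uniqueness assertion in \autoref{prop:toroidcompar} then forces the $I_{V_j}$ to be compatible over triple overlaps, yielding a well-defined global homeomorphism $I_W : W^{\rob} \to W^{\odot}$ intertwining $\tau_W^{\rob}$ and the rounding map $\tau_W$.

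Once the isomorphism with the real oriented blowup is in hand, the remaining assertions follow from transporting known properties of the real oriented blowup. The local model $(\R_{\geq 0} \times \bS^1)^n$, and more generally $(\tv^{\Gamma})^{\rob} = \Hom(\Gamma, \R_{\geq 0} \times \bS^1)$, is a manifold with corners and thus a topological manifold-with-boundary; these structures are real semi-analytic and are preserved by the semi-algebraic transition maps, so $W^{\odot} \cong W^{\rob}$ inherits a natural real semi-analytic manifold-with-boundary structure. For the boundary identification, I would argue locally: over the trivial locus $W \setminus \partial W$ the log structure is trivial, so by \autoref{thm:torsorfibre}(1) the rounding map is a homeomorphism there, placing these points in the interior; over points of $\partial W$ the fibers $\tau_W^{-1}(x)$ are positive-dimensional tori (\autoref{thm:torsorfibre} \eqref{fibrdescr}), realizing the boundary directions, exactly as the preimage of the coordinate hyperplanes is the topological boundary of $(\R_{\geq 0} \times \bS^1)^n$. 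Hence $\partial_{\topo}(W^{\odot}) = \tau_W^{-1}(\partial W)$.

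The main obstacle I anticipate is the gluing step: making precise that the local homeomorphisms $I_{V_j}$ are genuinely compatible, i.e.\ that the real oriented blowup transition maps and the rounded log transition maps are identified \emph{through} the $I_{V_j}$ and not merely abstractly homeomorphic. The clean way to handle this is to lean entirely on the functoriality of rounding together with \autoref{prop:robisround}: since both $W^{\rob}$ and $W^{\odot}$ are constructed as direct limits (gluings) of the same open pieces along transition maps that correspond under the canonical local isomorphisms, the universal property of the gluing produces the global map automatically, and the uniqueness part of \autoref{prop:toroidcompar} guarantees that no cocycle obstruction arises. This is precisely the strategy already signalled in the proof of \autoref{prop:toroidcompar}, so the argument is really a matter of assembling functoriality, the local identification, and the gluing construction rather than performing any new computation.
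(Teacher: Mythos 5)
Your proposal is correct and follows essentially the same route as the paper: the paper likewise reduces to the affine toric local models via toric charts, invokes \autoref{prop:robisround} as the key local identification, and glues (citing \cite[Lemma 1.2]{KN 08} for the topological part and \cite[Proposition 4.3]{CHH 25} for the semi-analytic refinement), which is exactly your atlas-plus-functoriality argument with the gluing compatibility secured by the uniqueness in \autoref{prop:toroidcompar}. The only difference is one of exposition: the paper delegates the local topological and semi-analytic verifications to the literature, whereas you spell out the cocycle and boundary-identification details explicitly.
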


    Furthermore, it can be shown that $W^{\odot}$ is a ``manifold with generalized corners''
    in the sense of Joyce~\cite{J 16} (see also~\cite{GM 15, KM 15, FJ 24}).  According to
    A'Campo (private communication), classical {\em manifolds with corners} modeled
    on open subsets of $(\R_{\geq 0})^n$ were first introduced by Cerf \cite[Section I.1]{C 61}.

    \autoref{prop:topmantoroid} can be proven locally since open sets of affine toric varieties serve
    as local models for toroidal varieties.  The topological part of the statement can be found
    in~\cite[Lemma 1.2]{KN 08}. Its extension to the semi-analytic category
    may be proved similarly to the proof of
    \cite[Proposition 4.3]{CHH 25}.  \autoref{thm:logehresm} in the next subsection complements
    this result by extending it to morphisms.

 \medskip
\subsection{A reinterpretation of rounding using log points}   \label{ssec:reintrounding}  $\:$
\medskip

In this subsection we introduce the {\em functor of log points} of a log space
(see \autoref{def:functpoints}) and we show that the rounding of a complex log space
is the set of its log points with values in the polar log point of
\autoref{def:varlogpoints} (see \autoref{prop:threetypespointsets}). This viewpoint
on the rounding operation allows to construct variants of it by replacing
the polar log point by other types of complex log points.
\medskip

  Recall that the notion of {\em complex log point} was introduced in \autoref{def:logpoint} and
  that several types of such points were listed in \autoref{def:varlogpoints}. The complex log points
  form a  full subcategory of the category of complex log spaces.

  Grothendieck defined a functor of points associated to each scheme $X$ (see Eisenbud and Harris
  \cite[Chapters I.4 and  VI.1]{EH 00}). It is the contravariant functor from the category of schemes
  to the category of sets, which at the level of objects transforms each scheme
  $\Pi$ into the set $\Hom(\Pi, X)$ of morphisms of schemes from $\Pi$ to $X$, called the
  {\em set of $\Pi$-valued points of $X$}, and each morphism of schemes $\psi : \Pi_1 \to \Pi_2$
  into the function
      \[ \begin{array}{cccc}
                 W(\psi) : & W(\Pi_2)  & \to & W(\Pi_1) \\
                  &   \nu  & \mapsto &  \nu \circ \psi.
            \end{array} \]

  There is an analogous notion for log spaces:

\begin{definition}   \label{def:functpoints}
   Let $W$ be a log space. The {\bf functor $\boxed{W(\cdot)}$ of log points}
   is the functor from the category of log spaces to that of sets defined by:
       \[ \boxed{W(\Pi)} := \Hom(\Pi, W)  \]
   at the level of objects, where morphisms are taken in the log category, and by:
       \[ \begin{array}{cccc}
                 \boxed{W(\psi)} : & W(\Pi_2)  & \to & W(\Pi_1) \\
                  &   \nu  & \mapsto &  \nu \circ \psi
            \end{array} \]
    at the level of morphisms. Here, $\psi : \Pi_1 \to \Pi_2$ is a morphism
    of log spaces.  $\boxed{W(\Pi)}$ is called the
    {\bf set of $\Pi$-points of $W$}.
\end{definition}

Recall that the triviality log subspace $W^{\mathrm{triv}}$ of $W$ was introduced in \autoref{def:trivlocus}.  It turns out that the underlying topological space $|W|$ of $W$, its triviality log subspace $W^{\mathrm{triv}}$ and its rounding $W_{\mathrm{log}}$ may all be seen as sets of $\Pi$-points
of $W$, for special types of complex log points in the sense of \autoref{def:logpoint}. Here is the
precise statement:

\begin{proposition}   \label{prop:threetypespointsets}
     Let $W$ be a  complex log space. One has the following identities:
        \[   \left\{
                  \begin{array}{ccl}
                      W(\Pi^{\mathrm{triv}}) & = &  |W^{\mathrm{triv}}|, \\
                      W(\Pi^{\mathrm{pol}}) & = &  W^{\odot},  \\
                      W(\Pi^{\mathrm{\emptyset}}) & = &  |W|.
                  \end{array}
              \right.
        \]
      Moreover, the natural morphisms
         \[ \Pi^{\mathrm{triv}}  \leftarrow  \Pi^{\mathrm{pol}} \leftarrow \Pi^{\mathrm{\emptyset}}\]
      of complex log points induce the morphisms
         \[ |W^{\mathrm{triv}}| \rightarrow   W^{\odot} \rightarrow |W| \]
         in which the left arrow represents the natural inclusion of the triviality locus
         $|W^{\mathrm{triv}}|$ in the rounding of $W$ and the right arrow
         is the rounding map $\tau_{W}$.
\end{proposition}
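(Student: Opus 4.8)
The plan is to prove every asserted identity by directly unwinding the definition of a morphism of log spaces, so that the three sets of log points become tautologically the three objects on the right. First I would fix a complex log point $\Pi$ with defining monoid morphism $\alpha_{\Pi}\colon \Gamma \to (\CC,\cdot)$ and observe that, by \autoref{def:morphprelog} and \autoref{def:morlogspaces}, a log morphism $\Pi \to W$ consists of a point $x \in |W|$, a local homomorphism of $\CC$-algebras $\cO_{W,x}\to \CC$, and a monoid morphism $\phi^{\flat}\colon \cM_{W,x}\to \Gamma$ fitting into the commuting square of a prelog morphism. Because the residue field of $\cO_{W,x}$ is $\CC$, the algebra homomorphism is necessarily the evaluation $\mathrm{ev}_x\colon f\mapsto f(x)$, so the ringed-space data is determined by $x$ and the square becomes the single relation $\alpha_{\Pi}\circ\phi^{\flat} = \mathrm{ev}_x\circ\alpha_{W,x}$. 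Before splitting into cases I would record the one property of log structures used repeatedly: writing $m(x):=\mathrm{ev}_x(\alpha_{W,x}(m))$, one has $m(x)\neq 0$ if and only if $m\in\cM_{W,x}^{\star}$, since a germ is a unit of $\cO_{W,x}$ exactly when its value is nonzero and $\alpha_{W,x}$ identifies the edges $\cM_{W,x}^{\star}$ and $\cO_{W,x}^{\star}$.

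With this dictionary the empty and trivial cases are immediate. For $\Pi^{\mathrm{\emptyset}}$, where $\Gamma=(\CC,\cdot)$ and $\alpha_{\Pi}=\mathrm{id}$, the relation forces $\phi^{\flat}=\mathrm{ev}_x\circ\alpha_{W,x}$; thus the morphism is determined by $x$ alone and every point occurs, giving $W(\Pi^{\mathrm{\emptyset}})=|W|$. For $\Pi^{\mathrm{triv}}$, where $\Gamma=(\CC^*,\cdot)$ with $\alpha_{\Pi}$ the inclusion, a factorisation $\phi^{\flat}\colon\cM_{W,x}\to\CC^*$ of $\mathrm{ev}_x\circ\alpha_{W,x}$ exists precisely when $m(x)\neq 0$ for all $m$, which by the recorded property means $\cM_{W,x}=\cM_{W,x}^{\star}$, i.e.\ $x$ lies in the triviality locus of \autoref{def:trivlocus}; it is then unique because $\CC^*\hookrightarrow\CC$ is injective, so $W(\Pi^{\mathrm{triv}})=|W^{\mathrm{triv}}|$.

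The main case is $\Pi^{\mathrm{pol}}$, with $\Gamma=(\R_{\geq 0}\times\bS^1,\cdot)$ and $\alpha_{\Pi}=\tau_{\CC}$. Writing $\phi^{\flat}(m)=(r(m),u(m))$ with monoid morphisms $r\colon\cM_{W,x}\to\R_{\geq 0}$ and $u\colon\cM_{W,x}\to\bS^1$, the relation reads $r(m)\,u(m)=m(x)$; taking moduli forces $r(m)=|m(x)|$, so $\phi^{\flat}$ is determined by $u$, and the relation collapses to the requirement $u(m)=\mathrm{sign}(m(x))$ whenever $m(x)\neq 0$. I expect this to be the delicate step, because the rounding condition of \autoref{def:rounding} imposes $u(m)=\mathrm{sign}(m(x))$ only for $m$ in the edge $\cM_{W,x}^{\star}$, whereas the compatibility square a priori constrains $u$ on all of $\cM_{W,x}$; the tension is resolved exactly by the recorded property, which guarantees that the only $m$ with $m(x)\neq 0$ are edge elements. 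Hence sending $\phi$ to the pair $(x,u)$ is a bijection onto $W^{\odot}$, with inverse $(x,u)\mapsto\phi^{\flat}$ where $\phi^{\flat}(m)=(|m(x)|,u(m))$, giving $W(\Pi^{\mathrm{pol}})=W^{\odot}$.

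Finally I would identify the two induced maps using the contravariance of $W(\cdot)$ (\autoref{def:functpoints}). The natural log-point morphisms are $\Pi^{\mathrm{pol}}\to\Pi^{\mathrm{triv}}$, whose $\flat$-part is $\CC^*\to\R_{\geq 0}\times\bS^1,\ \lambda\mapsto(|\lambda|,\mathrm{sign}(\lambda))$, and $\Pi^{\mathrm{\emptyset}}\to\Pi^{\mathrm{pol}}$, whose $\flat$-part is $\tau_{\CC}$ itself; both are forced, as above, once $\phi^{\sharp}=\mathrm{id}_{\CC}$. Precomposing a $\Pi^{\mathrm{triv}}$-point $\nu$ attached to $x\in|W^{\mathrm{triv}}|$ with the first morphism yields, via $(\nu\circ\psi)^{\flat}=\psi^{\flat}\circ\nu^{\flat}$, the point $(x,u)$ with $u(m)=\mathrm{sign}(m(x))$ for all $m\in\cM_{W,x}=\cM_{W,x}^{\star}$, which is exactly the canonical image of $x$ in $W^{\odot}$; this identifies the left arrow with the inclusion of the triviality locus. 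Precomposing a $\Pi^{\mathrm{pol}}$-point attached to $(x,u)$ with the second morphism leaves the base point unchanged, so the right arrow sends $(x,u)$ to $x$ and is therefore the rounding map $\tau_W$. A last verification that these bijections carry the topology of \autoref{def:rounding} to the expected ones then completes the argument; the only genuine obstacle throughout is the edge-versus-all-of-$\cM_{W,x}$ subtlety flagged in the $\Pi^{\mathrm{pol}}$ case.
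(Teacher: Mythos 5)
The paper gives no proof of this proposition at all (it is explicitly left to the reader as an exercise right after the statement), so there is no argument to compare yours against; your proof is correct and is precisely the intended unwinding of \autoref{def:morphprelog}, \autoref{def:functpoints} and \autoref{def:rounding}, including the correct identification of the induced maps via $(\nu\circ\psi)^{\flat}=\psi^{\flat}\circ\nu^{\flat}$. The one place to tighten is the justification of your ``recorded property'' $\{m\in\cM_{W,x}: m(x)\neq 0\}=\cM_{W,x}^{\star}$: saying that ``$\alpha_{W,x}$ identifies the edges'' only yields the inclusion $\cM_{W,x}^{\star}\subseteq\alpha_{W,x}^{-1}(\cO_{W,x}^{\star})$, and the reverse inclusion is not a formal consequence of \autoref{def:logspace} as literally worded --- for the prelog structure on a point given by $\cM=\N\oplus\CC^{*}$, $\alpha(n,\lambda)=\lambda$, the map $\alpha$ restricts to an isomorphism on edges, yet $\alpha^{-1}(\CC^{*})=\cM\neq\cM^{\star}$, and for this ``log point'' both $W(\Pi^{\mathrm{triv}})=|W^{\mathrm{triv}}|$ and $W(\Pi^{\mathrm{pol}})=W^{\odot}$ fail. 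So you should invoke the standard form of the axiom, namely that $\alpha_{W}$ induces an isomorphism $\alpha_W^{-1}(\cO_W^{\star})\to\cO_W^{\star}$ (Kato; Ogus, Definition III.1.1.1), which is clearly what the paper intends; combined with the fact that $\cO_{W,x}$ is a local $\CC$-algebra with residue field $\CC$, this gives your property at once, and with that reading every step of your argument is sound.
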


  We leave the proof of \autoref{prop:threetypespointsets} as an excellent exercise allowing
  the reader to understand the notion of morphism of log spaces and the definitions of the
  topological spaces $ |W^{\mathrm{triv}}|$ and $W^{\odot}$.

\begin{remark}  \label{rem:functspacevp}
    The identification $W^{\odot} = W(\Pi^{\mathrm{pol}})$ was noticed already by Kato and Nakayama
    in the paper in which they introduced the notion of rounding (see \cite[Page 165]{KN 99}).
    A proof of this identification may be found in \cite[Remark (1.1)]{A 21}.
\end{remark}

\begin{remark}   \label{rem:analogsPi}
    This point of view on the sets $W^{\mathrm{triv}}$, $W^{\odot}$ and $|W|$ allows to
    construct variants of them by working with other complex log points. For instance, in
     \cite[Definition 4.1]{C 16}, Cauwbergs introduced (with a different terminology)
    an {\em extended rounding} of $W$ as $W(\Pi)$, where $\Pi$ is defined by the morphism of
    monoids:
        \[   \begin{array}{ccc}
                                       ( \R_{\geq 0} \times \R_{\geq 0}  \times \bS^1, \cdot) & \to  &  (\CC, \cdot)    \\
                                         (q, r, u)        & \mapsto  & r \cdot u.
                 \end{array}    \]
     This allowed him to obtain models of Milnor fibrations endowed with canonical geometric
     monodromy transformations, similar to A'Campo's models from \cite{A 75}
     (see \cite[Definition 5.1]{C 16}). An analogous
     construction was performed recently by Campesato, Fichou and Parusinski \cite[Section 2.1]{CFP 21}
     under the name of {\em complete log space} associated to a complex log space.
\end{remark}

  \medskip
  \subsection{Special cases of Nakayama and Ogus' local triviality theorem}
  \label{ssec:specaseNO}  $\:$
  \medskip

  In this subsection we discuss Nakayama and Ogus' 2010 local triviality theorem for the roundings of
  special types of log morphisms
  (see \autoref{thm:logehresm}) and two of its consequences
  for global and, respectively, local complex one-parameter families of analytic varieties
  (see Corollaries \ref{cor:quasitorrelcoh} and \ref{cor:milntubelog}).
  The local corollary may be used in the study of {\em Milnor fibers} of {\em smoothings of
  singularities}.
  \medskip

       Using Siebenmann's topological local triviality theorem from~\cite[Corollary 6.14]{S 72},
       Nakayama and Ogus proved  the following log version of
       Ehresmann's theorem (see~\cite[Theorems 3.5 and 5.1]{NO 10}):

    \begin{theorem}   \label{thm:logehresm}
         Let $\phi \colon V \to W$ be a morphism of log complex analytic spaces, where $W$ is fine and
         $V$ is relatively coherent. Assume that $f$ is proper, separated, exact
         and relatively log smooth. Then,
         its rounding $\phi^{\odot} \colon V^{\odot} \to W^{\odot}$ is a locally trivial fibration whose fibers
         are oriented topological manifolds-with-boundary. The union of the boundaries of the fibers
         of $\phi^{\odot}$ consists of those points of $V^{\odot}$ sent by the rounding map
         $\tau_V\colon V^{\odot}\to |V|$ to points of $|V|$ where $\phi$ is not vertical.
    \end{theorem}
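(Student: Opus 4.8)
The plan is to reduce the statement to a purely local computation of roundings of toric morphisms and then to invoke Siebenmann's local triviality theorem, as the hypotheses are tailored for exactly this. Since local triviality of $\phi^{\odot}$ is a property local on the target $W^{\odot}$, I would first use the functoriality of rounding (\autoref{thm:torsorfibre} \eqref{functbr}) together with the surjectivity of $\tau_W$ for fine $W$ (\autoref{thm:torsorfibre} \eqref{fibrdescr}) to work near an arbitrary point of $W^{\odot}$. There, relative log smoothness of $\phi$ lets me invoke Kato's structure theorem for log smooth morphisms: up to strict morphisms — whose roundings produce cartesian squares by \autoref{thm:torsorfibre} \eqref{cartdiag}, so that they may be stripped off by base change — and up to smooth factors, $\phi$ is modelled by a toric morphism $\chi^{\mu} \colon \tv^{\Gamma_1} \to \tv^{\Gamma_2}$ induced by a homomorphism $\mu \colon \Gamma_2 \to \Gamma_1$ of toric monoids.

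First I would compute the rounding of such a toric morphism explicitly. By \autoref{prop:robisround} the rounding of $\chi^{\mu}$ coincides with its real oriented blowup, so it is the map
\[
(\chi^{\mu})^{\odot} \colon \Hom(\Gamma_1, \R_{\geq 0} \times \bS^1) \longrightarrow \Hom(\Gamma_2, \R_{\geq 0} \times \bS^1), \qquad x \mapsto x \circ \mu,
\]
of \autoref{def:robuafftoricmorph}. Splitting $\R_{\geq 0} \times \bS^1$ into its radial and angular monoids, this map decomposes into an \emph{angular part}, governed by $\Hom(-, \bS^1)$, and a \emph{radial part}, governed by $\Hom(-, \R_{\geq 0})$. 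The angular part is a morphism of compact tori (after passing to the associated groups, via \autoref{rem:ConnectedFibers}), hence already a trivial torus bundle over its image; the delicate point is the radial part, where coordinates in $\R_{\geq 0}$ can reach the boundary value $0$. This is where \emph{exactness} and \emph{log smoothness} enter: exactness guarantees that the induced map on ghost monoids $\overline{\cM}_{W}\to\overline{\cM}_{V}$ is exact, so that faces pull back to faces and, in suitable coordinates, the radial map becomes a projection $(\R_{\geq 0})^{a} \times (\R_{> 0})^{b} \to (\R_{\geq 0})^{a'}$ onto a coordinate subproduct. This is exactly what is needed to exhibit $(\chi^{\mu})^{\odot}$ as a topological submersion whose fibres are manifolds-with-corners, hence topological manifolds-with-boundary.

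With the local model in hand, I would assemble the global conclusion. Properness of $\phi$ is inherited by $\phi^{\odot}$: the rounding maps $\tau_V,\tau_W$ have compact fibres and fit into the commutative square of \autoref{thm:torsorfibre} \eqref{functbr}, so properness of $|\phi|$ forces properness of $\phi^{\odot}$, while separatedness ensures Hausdorff fibres. Having shown that $\phi^{\odot}$ is a proper topological submersion with manifold-with-boundary fibres, I would apply Siebenmann's local triviality theorem for proper topological submersions to conclude that $\phi^{\odot}$ is a locally trivial fibration. Orientability of the fibres comes from the complex (hence canonically oriented) interior together with the coorientation of the boundary strata supplied by the radial coordinates. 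Finally, the boundary description reads off from the local model: a point of a fibre lies on the boundary precisely when one of its radial coordinates vanishes, that is, when $\tau_V$ sends it into the toroidal boundary of $V$ along a direction in which $\phi$ fails to be vertical, which is exactly the stated identification of $\bigcup \partial_{\topo}(\text{fibre})$ with the non-vertical locus.

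The main obstacle I anticipate is the local model step — showing that exactness together with log smoothness force the radial part of $(\chi^{\mu})^{\odot}$ to be a coordinate projection. This combinatorial analysis of $\Hom(\mu, \R_{\geq 0})$ must simultaneously control how $\mu$ interacts with the faces of the monoids (for the boundary behaviour) and with the lattice of the torus (for the fibre structure), and it is the technical heart of the argument; once it produces a topological submersion with the correct fibres, Siebenmann's theorem delivers local triviality essentially mechanically.
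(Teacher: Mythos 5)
A point of reference first: the paper itself gives no proof of this theorem. It is quoted from Nakayama and Ogus \cite[Theorems 3.5 and 5.1]{NO 10}, with the sole indication that their argument rests on Siebenmann's topological local triviality theorem \cite[Corollary 6.14]{S 72}; the paper even declines to define the hypotheses (relative coherence, exactness, relative log smoothness, verticality) because only the two corollaries are used later. Your overall architecture --- local models for the rounding, an analysis of the radial part $\Hom(-,\R_{\geq 0})$ driven by exactness, then properness plus Siebenmann --- does match the structure of the Nakayama--Ogus proof.

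There is, however, a genuine gap in your reduction step, and it is not a technicality. You invoke Kato's structure theorem for log smooth morphisms, which applies to log smooth morphisms between \emph{fine} log spaces. The hypotheses of the theorem are strictly weaker: $W$ is fine, but $V$ is only \emph{relatively coherent} and $\phi$ only \emph{relatively log smooth}. These weaker notions were introduced by Nakayama and Ogus precisely because the intended applications --- for instance the restriction of the divisorial log structure of $\tilde{X}$ to the central fiber $Z(f \circ \pi)$, which is step (4) of the scheme in \autoref{ssec:approundsing} --- produce log structures on $V$ that are \emph{not} coherent: no chart subordinate to a finitely generated monoid exists, hence no toric local model of the kind your argument requires. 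As written, your sketch proves at best the special case where $V$ is fine and $\phi$ is log smooth; covering relative coherence forces different local models (monoid homomorphisms decorated with ideal/face data recording where the log structure of $V$ is merely pulled back from $W$), and building those is a substantial part of \cite{NO 10}. Separately, the step you yourself flag as the technical heart --- that exactness forces the radial map to be a coordinate projection $(\R_{\geq 0})^{a} \times (\R_{>0})^{b} \to (\R_{\geq 0})^{a'}$ --- is asserted rather than proved, and it is exactly the content of Nakayama--Ogus' monoid-theoretic sections, so it cannot be taken as given. A smaller slip: the angular part $\Hom(M_1,\bS^1) \to \Hom(M_2,\bS^1)$ is a surjective morphism of compact tori, hence only a \emph{locally} trivial bundle (consider $z \mapsto z^2$ on $\bS^1$), not a trivial one; local triviality is all you need, though.
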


    We will not give the definitions of the terms involved in the statement which we have not discussed until now
       (\emph{relative coherence, separated, exact and relatively log smooth
       morphisms, points where a morphism is vertical}),
       since our interest in this result lies in two of its consequences,
       namely, Corollaries \ref{cor:quasitorrelcoh}  and \ref{cor:milntubelog} discussed below.

     \begin{remark}\label{rem:logehresm}
         \autoref{thm:logehresm} generalizes Kawamata's~\cite[Theorem 2.4]{K 02}, which
         concerned his notion of real oriented blowups of quasi-smooth toroidal
         varieties (see \autoref{rem:Kawrob}).
    \end{remark}

 The next corollary to~\autoref{thm:logehresm} can be proved by
       translating the notions of \emph{relative coherence},
       {\em separatedness, exactness}, \emph{log smoothness},
       \emph{relative log smoothness} and \emph{verticality}
       into the toroidal language, when the target is a {\bf divisorial log disk}
       $\boxed{\D^{\dagger}_{0_{\CC}}} := (\D, \cO_{\D}^{\star}(- 0_{\CC}))$:

       \begin{corollary}  \label{cor:quasitorrelcoh}
             Let $ \varphi \colon V \to \D$ be a proper complex analytic morphism from
             a complex manifold $V$ to an open disk $\D$ of $\CC$ centered at the origin
             $0_{\CC}$ of $\CC$.
             Assume that the reduced fiber $Z(\varphi) := \varphi^{-1}(0_{\CC})$ is a divisor with simple
             normal crossings in $V$.  Consider the  log enhancement
         $\varphi^{\dagger} \colon V^{\dagger}_{Z(\varphi)} \to \D^{\dagger}_{0_{\CC}}$
         of $\varphi$ associated to
         the divisors $Z(\varphi)$ and $0_{\CC}$,  in the sense of~\autoref{def:logenhancement}.
          Then, its rounding
              \[  (\varphi^{\dagger})^{\odot} \colon (V^{\dagger}_{Z(\varphi)})^{\odot}
                    \to (\D^{\dagger}_{0_{\CC}})^{\odot}  \]
          is a locally trivial topological fibration.
       \end{corollary}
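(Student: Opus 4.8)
The plan is to deduce the corollary from \autoref{thm:logehresm} by checking that the log enhancement $\varphi^{\dagger} \colon V^{\dagger}_{Z(\varphi)} \to \D^{\dagger}_{0_{\CC}}$ satisfies all the hypotheses of that theorem, whose conclusion is precisely that $(\varphi^{\dagger})^{\odot}$ is a locally trivial fibration. First I would identify the two log spaces as toroidal log varieties in the sense of \autoref{def:logtoricgen}: the disk $\D$ with the reduced divisor $\{0_{\CC}\}$ is a (one-dimensional) normal crossings toroidal variety in the sense of \autoref{def:nctorvar}, and $V$ with the simple normal crossings divisor $Z(\varphi)$ is likewise a normal crossings toroidal variety. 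By \autoref{prop:toroidfine} both are therefore fine; in particular the target $\D^{\dagger}_{0_{\CC}}$ is fine, and the coherence of the source $V^{\dagger}_{Z(\varphi)}$ supplies the relative coherence required in \autoref{thm:logehresm}. Properness is assumed, and separatedness is automatic for a morphism in the complex analytic category.

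The remaining and main work is to verify, locally in toric charts, that $\varphi^{\dagger}$ is \emph{exact} and \emph{relatively log smooth}. Near a point $p \in Z(\varphi)$ I would choose holomorphic coordinates $z_1, \dots, z_n$ on $V$ in which $Z(\varphi) = \{z_1 \cdots z_k = 0\}$, so that $\varphi = w \cdot z_1^{a_1} \cdots z_k^{a_k}$ for a unit $w \in \cO_V^{\star}$ and integers $a_1, \dots, a_k \geq 1$, the multiplicities with which the branches through $p$ occur in the fiber. The canonical chart of the source log structure is then subordinate to $\N^k$ (with $e_i \mapsto z_i$) and that of the target to $\N$, and since $w$ is a unit it is killed in the ghost sheaf, so $\varphi^{\dagger}$ is carried by the monoid morphism
\[ \theta \colon \N \to \N^k, \qquad 1 \mapsto (a_1, \dots, a_k). \]

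For exactness I would compute
\[ (\theta^{\gp})^{-1}(\N^k) = \{\, n \in \Z : n a_i \geq 0 \text{ for all } i \,\} = \N, \]
the last equality holding \emph{precisely because every $a_i$ is strictly positive}; hence $\theta$ is exact, and this is exactly the point at which the hypothesis that $Z(\varphi)$ is the full reduced fiber enters. For relative log smoothness I would invoke Kato's criterion: the group homomorphism $\theta^{\gp} \colon \Z \to \Z^k$ is injective, the torsion of its cokernel $\Z^k / \langle (a_1, \dots, a_k) \rangle$ is annihilated by a nonzero integer which is invertible since we work over $\CC$, and the residual classical morphism is the projection $\CC^n \to \CC^k$, which is smooth. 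Finally, verticality of $\varphi^{\dagger}$ fails exactly along $Z(\varphi)$, which matches the description of the boundaries of the fibers in \autoref{thm:logehresm}.

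The hard part will be the bookkeeping in Kato's log smoothness criterion transported to the relative setting, and in particular confirming that the characteristic-zero hypothesis renders the torsion condition on the cokernel of $\theta^{\gp}$ harmless; by contrast the exactness computation is short and isolates the role of the positivity of the multiplicities $a_i$. Granting these local verifications, \autoref{thm:logehresm} applies and yields the locally trivial topological fibration $(\varphi^{\dagger})^{\odot} \colon (V^{\dagger}_{Z(\varphi)})^{\odot} \to (\D^{\dagger}_{0_{\CC}})^{\odot}$, as claimed.
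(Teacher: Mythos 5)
Your overall strategy is exactly the paper's: the paper proves this corollary by invoking \autoref{thm:logehresm} and asserting that \emph{relative coherence}, \emph{separatedness}, \emph{exactness} and \emph{relative log smoothness} can be translated into the toroidal language when the target is the divisorial log disk $\D^{\dagger}_{0_{\CC}}$; your chart computation supplies precisely that translation, which the paper leaves implicit. The core verifications are correct: both log spaces are normal crossings toroidal, hence fine by \autoref{prop:toroidfine}; locally $\varphi^{\dagger}$ is carried by $\theta \colon \N \to \N^k$, $1 \mapsto (a_1, \dots, a_k)$ with all $a_i \geq 1$ precisely because $Z(\varphi)$ is the full reduced fiber; $(\theta^{\gp})^{-1}(\N^k) = \N$ gives exactness; and Kato's criterion gives (relative) log smoothness. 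One small repair: for Kato's criterion you need an actual commuting chart of the morphism, not merely agreement in the ghost sheaf, so the unit $w$ must be absorbed into a coordinate, e.g.\ by replacing $z_1$ with $w^{1/a_1} z_1$, using that $w$ admits a local holomorphic $a_1$-th root near $p$.

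One sentence of your proposal, however, is false and should be deleted: verticality of $\varphi^{\dagger}$ does \emph{not} fail along $Z(\varphi)$; the morphism is vertical everywhere. At a point lying on $k$ branches, the relevant quotient is that of $\N^k$ by the congruence generated by $(a_1,\dots,a_k)\sim 0$, and since every $a_i \geq 1$, each generator $e_i$ has an inverse, namely the class of $(a_1,\dots,a_k) - e_i \in \N^k$; the quotient is therefore a group, which is what verticality requires. This is consistent with the geometry: over interior points of the rounded disk the fibers of $(\varphi^{\dagger})^{\odot}$ are the compact boundaryless fibers $\varphi^{-1}(s)$ of the proper map $\varphi$ from the boundaryless manifold $V$, so local triviality forces the fibers over the boundary circle to be closed manifolds as well; the boundary locus described in \autoref{thm:logehresm} is empty in this situation. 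Non-verticality would occur only if the log structure on $V$ contained \emph{horizontal} divisor components not contained in $Z(\varphi)$ --- excluded here by the very same positivity $a_i \geq 1$ that gave you exactness. Since verticality is not a hypothesis of \autoref{thm:logehresm} but only enters the description of its conclusion, this error does not undermine your deduction of the corollary; the rest of the argument stands.
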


       \begin{remark} \label{rem:recentuses}
           In \cite{AO 20}, Achinger and Ogus studied the monodromy of such families
          $ \varphi \colon V \to \D$ using \autoref{cor:quasitorrelcoh}. In \cite{U 01},
          using an analog of \autoref{cor:quasitorrelcoh}, Usui had already studied
           the monodromies of such families in the case of
          {\em semi-stable degenerations}, that is, morphisms $\varphi$ as above with reduced
          scheme-theoretic central fiber. A theory of {\em real log schemes}
          was developed by Arg\"uz \cite{A 21} in order to study real one-parameter degenerations
          of real algebraic varieties. This theory was used by Ambrosi and Manzaroli \cite{AM 22}
          to evaluate Betti numbers of real algebraic varieties and by Rau \cite{R 23} to study totally
          real semi-stable degenerations.
      \end{remark}

      A version of \autoref{cor:quasitorrelcoh} can be used as a tool to study a germ
       $f : (X, x) \to (\CC, 0)$ of holomorphic function defined on a germ
             of irreducible complex variety,  such that $f^{-1}(0_{\CC})$ contains the singular locus of $(X,x)$.
             In this case, the other fibers of $f$ are {\em smooth} in the neighborhood of $x$.
             For this reason, one
             says that $f$ is a {\bf smoothing} of the singularity  $(Z(f), x) := (f^{-1}(0_{\CC}),x)$.
             Such a smoothing $f$ admits an associated {\bf Milnor fibration} or {\bf Milnor-L\^e fibration}
             over the circle (see, in chronological order, \cite{M 68}, \cite{L 77}, \cite[Theorem 5.1]{S 19}
             and \cite[Theorem 6.7.1]{CS 21}).  Choose
             a resolution $\pi : (\tilde{X}, E) \to (X,x)$ such that the reduced fiber
             $Z(f \circ \pi) := (f \circ \pi)^{-1}(0_{\CC}) \hookrightarrow \tilde{X}$ of the composition $f \circ \pi$
            is a divisor with simple normal crossings.
             We aim to apply \autoref{cor:quasitorrelcoh} to
            $V := \tilde{X}$ and $\varphi := f \circ \pi$.

       In order to achieve the \emph{properness} of $f \circ \pi$, we work with a
       \textbf{Milnor tube representative} of $f$, which we still denote by $f$ for simplicity.
       Such a representative is obtained by first considering the part of a representative
       $X \hookrightarrow \CC^n$
       of $(X,x)$ contained in a Milnor ball, and then restricting this set further to the preimage
       by $f$ of a sufficiently small Euclidean disk $\D$ centered at the origin of $\CC$.
       There is a slight difference between this setting and that of~\autoref{cor:quasitorrelcoh},
       as  $V := \tilde{X}$ has a topological boundary. However, since $f \circ \pi$
       is locally trivial near that boundary,
       it is straightforward to show that a version of~\autoref{cor:quasitorrelcoh}
       applies to this slightly different context:

        \begin{corollary}  \label{cor:milntubelog}
             Let $f \colon X \to \D$ be a Milnor tube representative of a smoothing
             $f : (X, x) \to (\CC, 0_{\CC})$ such that $f^{-1}(0_{\CC})$ contains the singular locus of $(X,x)$.
             Let $\pi : (\tilde{X}, E) \to (X,x)$ be a resolution of $(X,x)$ such that the reduced fiber
             $Z(f \circ \pi) \hookrightarrow \tilde{X}$ of $f \circ \pi : \tilde{X} \to \D$ over $0_{\CC}$
             is a divisor with simple normal crossings. Consider the  log enhancement
             $(f \circ \pi)^{\dagger} \colon \tilde{X}^{\dagger}_{Z(f \circ \pi)} \to \D^{\dagger}_{0_{\CC}}$
             of $f \circ \pi$ associated to
             the divisors $Z(f \circ \pi)$ and $0_{\CC}$,  in the sense of~\autoref{def:logenhancement}.
          Then, its rounding
              \[ ((f \circ \pi)^{\dagger})^{\odot} \colon (\tilde{X}^{\dagger}_{Z(f \circ \pi)})^{\odot}
                    \to (\D^{\dagger}_{0_{\CC}})^{\odot}  \]
          is a locally trivial topological fibration.
             Moreover, the restriction of this fibration to the boundary
          circle \linebreak  $(0_{\CC}, \cO_{\D | 0}^{\star}(- 0_{\CC}))^{\odot} $ of the cylinder
          $(\D^{\dagger}_{0_{\CC}})^{\odot} = (\D, \cO_{\D}^{\star}(- 0_{\CC}))^{\odot}$
          is isomorphic to the Milnor fibration of $f$ over the circle.
       \end{corollary}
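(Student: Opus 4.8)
The plan is to reduce everything to \autoref{cor:quasitorrelcoh} applied to $V := \tilde{X}$ and $\varphi := f \circ \pi$, together with an elementary homotopy argument over the radial direction of the rounded disk. For the first assertion I would invoke the boundary variant of \autoref{cor:quasitorrelcoh} announced just before the statement: the only discrepancy with the hypotheses of that corollary is that the Milnor tube representative $\tilde{X}$ carries a topological boundary coming from the Milnor sphere, but $f\circ\pi$ is locally trivial near that boundary, so the Nakayama--Ogus theorem \autoref{thm:logehresm} still delivers that $((f\circ\pi)^{\dagger})^{\odot}$ is a locally trivial topological fibration over $(\D^{\dagger}_{0_{\CC}})^{\odot}$.

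For the \emph{moreover} statement I would first unwind the base. By \autoref{prop:topmantoroid}, applied to the toroidal log disk $\D^{\dagger}_{0_{\CC}}$, its rounding $(\D^{\dagger}_{0_{\CC}})^{\odot}$ is the real oriented blowup of $\D$ at the origin, that is the half-open cylinder $[0,r)\times\bS^1$, with $\tau_{\D}$ the passage to polar coordinates; the boundary circle is $\tau_{\D}^{-1}(0_{\CC}) = \{0\}\times\bS^1$, which by \autoref{cor:restrround} equals the rounding $(0_{\CC}, \cO_{\D | 0}^{\star}(- 0_{\CC}))^{\odot}$ appearing in the statement. The crux is then that the fibration $((f\circ\pi)^{\dagger})^{\odot}$, being locally trivial over all of $[0,r)\times\bS^1$, restricts over the closed annulus $[0,\delta]\times\bS^1$ to the pullback, along the radial retraction onto $\{0\}\times\bS^1$, of its restriction to the boundary circle. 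Hence its restriction over $\{0\}\times\bS^1$ is isomorphic, as a fibration over $\bS^1$, to its restriction over any interior circle $\{\delta\}\times\bS^1$, and it suffices to identify the latter with the Milnor--L\^e fibration.

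Over the interior $(0,r)\times\bS^1 = \tau_{\D}^{-1}(\D\setminus\{0_{\CC}\})$ the log structure of $\D^{\dagger}_{0_{\CC}}$ is trivial, so by the first assertion of \autoref{thm:torsorfibre} the rounding map $\tau_{\D}$ is a homeomorphism there; using the commutative square \eqref{eq:roundingOfF} for $\phi = (f\circ\pi)^{\dagger}$, the fibration $((f\circ\pi)^{\dagger})^{\odot}$ over $\{\delta\}\times\bS^1$ becomes simply $f\circ\pi$ restricted to $(f\circ\pi)^{-1}(\partial_{\topo}\D_{\delta})$. Finally, since $\pi$ is an isomorphism away from the central fiber $f^{-1}(0_{\CC})$ --- its exceptional locus $E$ being contained in $Z(f\circ\pi)$ --- it carries $(f\circ\pi)^{-1}(\partial_{\topo}\D_{\delta})$ homeomorphically onto the downstairs Milnor tube $f^{-1}(\partial_{\topo}\D_{\delta})$ and $f\circ\pi$ to $f$; thus the restriction over $\{\delta\}\times\bS^1$ is exactly the Milnor--L\^e tube fibration $f\colon f^{-1}(\partial_{\topo}\D_{\delta}) \to \partial_{\topo}\D_{\delta}$, which together with the previous isomorphism identifies the boundary-circle fibration with the Milnor fibration of $f$. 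The main obstacle I anticipate is the first step: carefully justifying that the boundary variant of \autoref{cor:quasitorrelcoh} truly applies in the presence of the extra topological boundary of $\tilde{X}$, and ensuring that the homotopy-invariance isomorphism of the second step is chosen to commute with the projections to $\bS^1$.
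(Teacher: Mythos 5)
Your proposal is correct and follows the same route as the paper: the paper likewise obtains the first assertion by applying the boundary variant of \autoref{cor:quasitorrelcoh} (hence \autoref{thm:logehresm}) to $V := \tilde{X}$ and $\varphi := f \circ \pi$, dismissing the extra topological boundary of the Milnor tube exactly as you do, on the grounds that $f \circ \pi$ is locally trivial near it. For the \emph{moreover} clause the paper offers no explicit argument, so your completion is a genuine addition and is sound: the radial-retraction step (a locally trivial fibration over $\bS^1 \times [0,\delta]$ is pulled back from its restriction to $\bS^1 \times \{0\}$), the identification of the fibration over an interior circle with $f \circ \pi$ above $\partial_{\topo} \D_{\delta}$ via \autoref{thm:torsorfibre}~(1), \autoref{cor:restrround} and the commutative square \eqref{eq:roundingOfF}, and the final identification with the Milnor--L\^e tube fibration using that $\pi$ is an isomorphism off $Z(f \circ \pi)$, all hold and can indeed be arranged to commute with the projections to $\bS^1$.
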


     \begin{remark}   \label{rem:moregentoroid}
         The analog of \autoref{cor:milntubelog} is still true if $(\tilde{X}, Z(f \circ \pi))$
         is only assumed to be a toroidal variety. In the joint work \cite{CPPS 23}
         with Cueto and Stepanov, the author used
         an even more general version of \autoref{cor:milntubelog}
         (still resulting from \autoref{thm:logehresm}), to prove Neumann and Wahl's
         {\em Milnor fiber conjecture}. Namely, in \cite[Corollary 4.56]{CPPS 23}, the morphism $\pi$ could be a
         modification such that $Z(f \circ \pi)$ is a special kind of
         subdivisor of a divisor $\partial \tilde{X}$ with the property
         that $(\tilde{X}, \partial \tilde{X})$ is a toroidal variety in the sense of
         \autoref{def:toroidal}, but without $(\tilde{X}, Z(f \circ \pi))$ being toroidal.
         The point is that if any subdivisor of a divisor with
         simple normal crossings in a complex manifold is again a divisor with simple
         normal crossings, the analogous statement for toroidal boundaries is false.
         Variants of \autoref{cor:milntubelog}  were also used in the recent papers
          \cite{CFP 21} of Campesato, Fichou and Parusi\'nski,
          \cite{PS 23} of Portilla Cuadrado and Sigurdsson and \cite{FP 24}
          of Fern\'andez de Bobadilla and Pe\l ka, in which
          various types of questions related to the monodromy of smoothings of singularities were addressed.
     \end{remark}

   \medskip
  \subsection{Applications of rounding to singularity theory}
  \label{ssec:approundsing}  $\:$
  \medskip

  In this final subsection, we explain two ways of using the operation of rounding
  of complex log structures in singularity theory. Namely, we describe the steps
  which have to be followed when one
  wants to use rounding in the study of  {\em links of isolated complex singularities} or of
  {\em Milnor fibrations of smoothings of singularities}. For simplicity, we will consider only {\em smooth}
  total spaces of resolutions. But, as mentioned in \autoref{rem:moregentoroid}, one may extend both
  approaches to toroidal contexts.
  \medskip

  {\bf Using rounding in the study of links of isolated complex singularities.}

  \begin{enumerate}
   \item Start from a representative of the singularity:
      \[ (X,x).  \]

  \item Choose a normal crossings resolution of it:
     \[  \xymatrix{(\tilde{X},  E)   \ar[r]^-{\pi}  &  (X,x).} \]

  \item Consider the divisorial log structure on $\tilde{X}$ induced by $E$ (see \autoref{def:divlogstruct}):
    \[   \tilde{X}_E^{\dagger}.\]

  \item Restrict that log structure to $E$ (see \autoref{def:pb}):
    \[ E^{\dagger}.\]

  \item Round that restriction:
    \[ \xymatrix{ (E^{\dagger})^{\odot}   \ar[r]^-{\tau_{E^{\dagger}}} & E.} \]
 \end{enumerate}

   In this way, we get a representative $(E^{\dagger})^{\odot} $
   of the link of the starting singularity (see \autoref{prop:topmantoroid}).
   It is canonically decomposed into pieces, which are
   the preimages under the map $\tau_{E^{\dagger}}$ of the irreducible components of
   the reduced divisor $E$.

    \medskip

    {\bf Using rounding in the study of Milnor fibrations of smoothings.}

 \begin{enumerate}
   \item Start from a Milnor tube representative of a smoothing (see \autoref{ssec:specaseNO}):
      \[  \xymatrix{ (X,Z(f))   \ar[r]^-f & (\D, 0_{\CC}).} \]

  \item Choose a normal crossings resolution $\pi$ of $f$:
     \[  \xymatrix{(\tilde{X},  Z(f \circ \pi))  \ar[r]^-{\pi}  &   (X, Z(f))   \ar[r]^-f & (\D, 0_{\CC}).} \]

  \item Consider the divisorial log structures on $\tilde{X}$ and $\D$ induced by $Z(f \circ \pi)$
      and $0_{\CC}$ and the corresponding log enhancement of the morphism $f \circ \pi$
      (see \autoref{def:logenhancement}):
    \[    \xymatrix{   \tilde{X}_{Z(f \circ \pi)}^{\dagger}\ar[r]^-{(f \circ \pi)^{\dagger}}  & \D^{\dagger}.} \]

  \item Restrict this log enhancement to $Z(f \circ \pi)$ and $0_{\CC}$:
     \[  \xymatrix{  Z(f \circ \pi)^{\dagger}\ar[r]^-{(f \circ \pi)^{\dagger}}   & 0_{\CC}^{\dagger}}.\]

  \item Round this log morphism (see \autoref{thm:torsorfibre}):
      \[     \xymatrixcolsep{3pc}
               \xymatrix{
                                (Z(f \circ \pi)^{\dagger})^{\odot}
                                  \ar[r]^-{((f \circ \pi)^{\dagger})^{\odot}}
                                   \ar[d]_{\tau_{Z(f \circ \pi)^{\dagger}}}  &
                                      (0_{\CC}^{\dagger})^{\odot} \ar[d]^{\tau_{0_{\CC}^{\dagger}}}    \\
                                         Z(f \circ \pi)   \ar[r]_-{f \circ \pi}         &   0_{\CC}.}
                          \]

 \end{enumerate}
   In this way, we get a representative
      $\xymatrixcolsep{3pc} \xymatrix{   (Z(f \circ \pi)^{\dagger})^{\odot}
        \ar[r]^-{((f \circ \pi)^{\dagger})^{\odot}}  & (0_{\CC}^{\dagger})^{\odot}} $ of the Milnor fibration
        of the starting smoothing $f$ (see \autoref{cor:milntubelog}).
   It is canonically decomposed into pieces, which are the preimages under the map
   $\tau_{Z(f \circ \pi)^{\dagger}}$ of the irreducible components of the reduced divisor $Z(f \circ \pi)$.

     \medskip
     In both cases, rounding may be avoided when one wants to prove that parts of the links
        of two singularities or of the Milnor fibrations of two smoothings are isomorphic.
      Indeed,  if those parts are preimages under the corresponding rounding maps $\tau$
        of unions of irreducible components of divisors of type $E$ or $Z(f \circ \pi)$,  it is enough to
        prove the isomorphism of the restrictions to those unions of the log structures of those divisors.
        This is the approach of \cite{CPPS 23} to prove Neumann and
        Wahl's {\em Milnor fiber conjecture} for splice type singularities (see also \autoref{rem:moregentoroid}
        and \cite{PP 25}).

\bigskip

\noindent
\textbf{\small{Author's address:}}
\smallskip
\

\noindent
\small{P.\ Popescu-Pampu,
  Univ.~Lille, CNRS, UMR 8524 - Laboratoire Paul Painlev{\'e}, F-59000 Lille, France.
  \\
\noindent \emph{Email address:} \url{patrick.popescu-pampu@univ-lille.fr}}
\vspace{2ex}

\end{document}